\documentclass[10pt]{amsart}

\usepackage{amsmath, amsthm, amssymb}
\numberwithin{equation}{section}
\usepackage{xspace}
\usepackage{graphicx}
\usepackage{array}
\usepackage{braket}
\usepackage{multicol}
\usepackage{mathtools}
\usepackage{enumerate}
\usepackage{delarray}
\usepackage{mathtools}
\usepackage{faktor} % For quotients
\usepackage{mathrsfs}
\usepackage{tikz-cd}
\usepackage{hyperref}
\usepackage[normalem]{ulem} % For underlining
\usepackage[italicdiff]{physics} % For differentials
\usepackage{bbm} % For indicator
\usepackage{float}
\usepackage{stmaryrd} % for double brackets
\usepackage{aligned-overset}
\usepackage{xcolor}
\usepackage{cite}

\theoremstyle{plain}
\newtheorem{thm}{Theorem}[section]
\newtheorem{lemma}[thm]{Lemma}
\newtheorem{corollary}[thm]{Corollary}

\theoremstyle{remark}

\theoremstyle{definition}
\newtheorem{remark}[thm]{Remark}
\newtheorem{exmp}[thm]{Example}
\newtheorem{defn}[thm]{Definition}
\newtheorem*{notation*}{Notation}

\DeclarePairedDelimiterX{\inp}[2]{\langle}{\rangle}{#1, #2}

\makeatletter
\newcommand*{\bigcdot}{}% Check if undefined
\DeclareRobustCommand*{\bigcdot}{%
  \mathbin{\mathpalette\bigcdot@{}}%
}
\newcommand*{\bigcdot@scalefactor}{.5}
\newcommand*{\bigcdot@widthfactor}{1.15}
\newcommand*{\bigcdot@}[2]{%
  \sbox0{$#1\vcenter{}$}% math axis
  \sbox2{$#1\cdot\m@th$}%
  \hbox to \bigcdot@widthfactor\wd2{%
    \hfil
    \raise\ht0\hbox{%
      \scalebox{\bigcdot@scalefactor}{%
        \lower\ht0\hbox{$#1\bullet\m@th$}%
      }%
    }%
    \hfil
  }%
}
\makeatother

\newcommand{\C}{\mathbb{C}}

\newcommand{\B}{\mathbb{B}\xspace}
\newcommand{\M}{\mathbb{M}\xspace}

\renewcommand{\S}{\mathbb{S}\xspace}

\renewcommand{\H}{\mathcal{H}}
\newcommand{\K}{\mathcal{K}}

\DeclareMathOperator{\nor}{nor}

\usepackage{enumitem}
\usepackage{color}
\newlist{steps}{enumerate}{1}
\usepackage[foot]{amsaddr}
\setlist[steps, 1]{label = Step \arabic*:}
\hypersetup{%
  colorlinks=true,%
  linkcolor=blue,%
  citecolor=blue,%
  filecolor=blue,%
  menucolor=blue,%
  urlcolor=blue,%
  pdfnewwindow=true,%
  pdfstartview=FitBH
}   
\title{pp}

\newcommand{\KK}{{\mathcal K}}

\newcommand{\KKK}{{\mathbb K}}

\newcommand{\BB}{{\mathbb{ B}}}

\newcommand{\XX}{{\mathbb{ X}}}
\newcommand{\HH}{{\mathcal H}}
\newcommand{\VV}{{\mathcal V}}

\newcommand{\EE}{{\mathcal E}}

\newcommand{\GG}{{\mathcal G}}

\begin{document}

\title[Relative biexactness and mixing in von Neumann algebras]{Relative biexactness and mixing in von Neumann algebras}

\author[S. Kunnawalkam Elayavalli]{Srivatsav Kunnawalkam Elayavalli}
\address{Department of Mathematics, UC Berkeley, 970 Evans Hall, Berkeley, CA 94720
}\email{sriva@umd.edu}
%\urladdr{https://sites.google.com/view/srivatsavke/home}

\author[Z. Yang]{Zhiyuan Yang}
\address{Department of Mathematical, Texas A\&M, 155 Ireland St, College Station, TX 77843, USA}\email{zhiyuanyang@tamu.edu}%\urladdr{https://zhiyuanyang7.github.io/}

\begin{abstract}
    We develop a new technique to upgrade relative biexactness in general von Neumann algebras: suppose that $\{N_i\}_{i\in I}\subset M$ are mixing and biexact subalgebras of a separable von Neumann algebra with expectation, and if $M$ is biexact relative to  $\{N_i\}_{i\in I}$, then $M$ is biexact. This result yields several new examples of biexact von Neumann algebras, notably including amalgamated free products. By generalizing the relative biexactness results of Hoshino to the von Neumann algebra setting and applying our result above along with certain bimodule computations, we in fact obtain, as an application, a new classification result for biexactness for graph products of finite dimensional von Neumann algebras. This yields significant extensions of prior works of Caspers-Borst, and Blufstein-Goldman-Oyakawa. 
\end{abstract}

\maketitle

\section{Introduction}

The study of \emph{biexact} groups was initiated by Ozawa in his article \cite{Oza04}. This article not only settled the problem of Ge, proving solidity for $L(\mathbb{F}_n)$, but also opened up line of research that settled multiple long standing open problems \cite{OzPo10a, CS13, PopaVaesFree, BIP21}. Biexactness demands that the left action of the group on the \emph{small at infinity boundary} (the fixed points of the Stone-Cech remainder under the right action) is topologically amenable. Multiple groups are known to be biexact, including families of hyperbolic groups, relatively hyperbolic groups, wreath products, small cancelation groups, etc. On the other hand, $G$ is not biexact whenever it contains a copy of $K_1\times K_2$ where $K_1$ is infinite and $K_2$ is nonamenable. More generally, $L(G)$ is solid, i.e, the commutant of any diffuse subalgebra is amenable. Combined with the weak amenability \cite{CowlingHaagerup1989}, one in fact has strong solidity of $L(G)$ \cite{CS13}, which demands that the normalizer of any diffuse amenable subalgebra is amenable. 

Generalizing this notion to von Neumann algebras is an apriori hard problem because of the fundamental difficulties stemming from works of Johnson-Parrot \cite{johnson1972operators} and Popa \cite{popa1987commutant} in obtaining a satisfactory analogue of the small at infinity boundary. This was circumvented in the article \cite{DKEP23}, wherein a natural von Neumann algebra boundary theory was developed with various applications. They introduced a fruitful new topological perspective to study von Neumann algebras building on the theory of operator $M$-bimodules developed in works of Magajna \cite{Ma97, Magajna00}. By further developing a theory of nuclear maps in von Neumann algebras, \cite{ding2023biexact} established the notion of a biexact von Neumann algebra, and unified certain approaches towards strong solidity under this framework. Various other results pertaining to this line of research have appeared recently \cite{DKE24properproximalgroups, DKE24, ding2024first, Toyosawa2025Weak, ding2025unique, toyosawa2025relative, ding2025relative, isono2025weak, ding2025structural}. In particular, the work \cite{DKE24} developed a technique to bootstrap proper proximality in von Neumann algebras, and used it to derive applications to the structure of free products (see also \cite{ding2024first}, for another application to cocycle superrigidity). 

In this paper, we develop a new method to upgrade relative biexactness in general von Neumann algebras, with various applications. For the basic aspects of the underlying theory of boundary pieces used  in this paper, we direct the reader to Section \ref{prelims}. The main result of our paper is the following:

\begin{thm}\label{main intro}
        Let $M$ be a separable von Neumann algebra and $\mathcal{N}=\{N_i\}_{i\in I}$ be a family of subalgebras with expectation and with commuting boundary pieces. If $M$ is biexact relative to $\mathcal{N}$, and each $N_i$ is mixing and biexact, then $M$ is biexact.
\end{thm}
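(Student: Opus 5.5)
The plan is to follow the group-theoretic template, where biexactness relative to a family of biexact, \emph{almost malnormal}-type subgroups upgrades to biexactness. In the von Neumann algebraic setting of \cite{ding2023biexact}, $M$ is biexact relative to a boundary piece $\mathbb{X}$ when the inclusion $M\otimes M^{op}\to \mathbb{B}(L^2M)$ composed into the boundary $\mathbb{S}_{\mathbb{X}}(M)$ is $(M\subset \mathbb{B}(L^2M))$-nuclear. Relative biexactness with respect to $\mathcal{N}$ means exactly this for the boundary piece $\mathbb{X}_{\mathcal{N}}$ generated by the $N_i$, that is, by the projections $e_{N_i}$ together with their $M$-bimodule translates. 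The goal is to replace $\mathbb{X}_{\mathcal{N}}$ by the compact boundary piece $\mathbb{K}(L^2M)$. So I would prove a transitivity statement: if $M$ is biexact relative to $\mathbb{X}_{\mathcal{N}}$, and $M$ is biexact relative to $\mathbb{K}$ ``locally on'' $\mathbb{X}_{\mathcal{N}}$, then $M$ is biexact. Mixing and biexactness of the $N_i$ are what supply the local statement.

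\begin{steps}
\item Reduce to a single $N_i$. The hypothesis that the boundary pieces $\mathbb{X}_{N_i}$ commute should give that $\mathbb{X}_{\mathcal{N}}$ is the norm-closed span, hereditary hull, of the $\mathbb{X}_{N_i}$. It should also give that nuclearity into the quotient by $\mathbb{K}$, restricted to the corner $\mathbb{X}_{\mathcal{N}}$, can be checked piece by piece, by gluing nuclear maps through a partition of unity built from the commuting pieces.

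\item Use mixing to control the bimodule $L^2M$ over $N_i$. Mixing of $N_i\subset M$ means $E_{N_i}(x u_n y)\to 0$ for unitaries $u_n\to 0$ weakly and $x,y\in M\ominus N_i$. Equivalently, $L^2M\ominus L^2N_i$ is a mixing $N_i$-bimodule, i.e.\ its coefficients are compact. From this I would show that $\mathbb{X}_{N_i}\cap \mathbb{X}_{N_i}^{\perp\text{-translates}}$ is contained in $\mathbb{K}$, so that the boundary piece decomposes as
\[
\mathbb{X}_{N_i}=\overline{\operatorname{span}}\,\{x e_{N_i} T e_{N_i} y : x,y\in M,\ T\in \mathbb{B}(L^2N_i)\}.
\]
Modulo $\mathbb{K}(L^2M)$, the algebra $\mathbb{S}_{\mathbb{X}_{N_i}}$ then looks like an amplification of $\mathbb{S}(N_i)$, indexed by the $N_i$-cosets in $M$.

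\item Transfer biexactness. Biexactness of $N_i$ gives that $N_i\otimes N_i^{op}\to \mathbb{S}(N_i)$ is $(N_i\subset \mathbb{B}(L^2N_i))$-nuclear. Inducing along the basic construction $\langle M,e_{N_i}\rangle$, using the conditional expectation, I would produce a nuclear factorization of $M\otimes M^{op}\to \mathbb{S}_{\mathbb{X}_{N_i}}(M)/\mathbb{K}$-part. Composing it with the relative nuclear factorization for $\mathbb{X}_{\mathcal{N}}$ then gives nuclearity of $M\otimes M^{op}\to\mathbb{S}(M)$. In practice this is a \emph{glue}: weak$^*$ nuclear maps into $\mathbb{S}_{\mathbb{X}}(M)$ lift to $\mathbb{B}(L^2M)$ after a cut by a boundary element, and one corrects the error in $\mathbb{X}$ using Step 3.
\end{steps}

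I expect the main obstacle to be Step 2 combined with the gluing in Step 3. Showing that mixing forces the relevant intersections of boundary pieces to be compact (an analogue of almost malnormality giving $\mathbb{X}_{N}\cap x\mathbb{X}_{N}y\subset\mathbb{K}$) requires careful work with Magajna's $M$-topology. Ensuring that approximate factorizations, which are only point-$M$-topology limits, can be composed without losing nuclearity is also delicate. For the latter I would first try the bootstrapping technique of \cite{DKE24} for proper proximality, adapted from boundary states to nuclear maps.
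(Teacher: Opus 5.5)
\textbf{There is a genuine gap in Step 3.} That step is where the theorem actually lives, and as written it contains no working mechanism.

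The nuclear maps witnessing biexactness of $N_i$ are maps on $N_i$. There is no von Neumann analogue of the $\Gamma\times\Gamma$ (or $M\otimes M^{op}$) characterization that would let you ``induce'' them to $M$; the paper says so explicitly. Relative biexactness here means $M$-nuclearity of $M\subset\mathbb{S}_{\mathbb{X}}(M)$, not nuclearity of a map defined on $M\otimes M^{op}$. The natural way to pass through $\langle M,e_N\rangle$ is the inclusion $M\subset\langle M,e_N\rangle$. Its weak nuclearity already forces $N$ to be amenable (compress by $e_N$). That is the amenable case of \cite{toyosawa2025relative}, which is exactly what this theorem must go beyond.

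The missing idea is that nuclearity should come from $M$ itself, and biexactness of $N$ should be used non-nuclearly:
\begin{enumerate}
\item Relative biexactness already makes $M\subset\mathbb{B}(L^2M)$ $M$-nuclear, i.e.\ $M$ is weakly exact.
\item Biexactness of $N$ produces an $N$-bimodular u.c.p.\ map $\psi:\mathbb{B}(L^2N)\to\tilde{\mathbb{S}}(N,L^2M)$ with $\psi|_N=\iota^\sharp|_N$. It is a weak$^*$ limit of extensions of the approximating maps, obtained from injectivity of matrix algebras.
\item With $\bar q=\bar q_{JMJe_NJMJ}$ and $q=p^\sharp_{\mathrm{nor}}\bar q$, define $xe_NTe_Ny\mapsto\iota^\sharp(x)q\psi(T)q\iota^\sharp(y)q_{\mathbb{K}}^\perp$.
\item Mixing enters through the bidual identity $q\iota^\sharp(x)qq_{\mathbb{K}}^\perp=\iota^\sharp(E_N(x))qq_{\mathbb{K}}^\perp$. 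This makes the map a well-defined c.c.p.\ $M$-bimodular map.
\item The equality $\bar q\,p^{N'}_{\mathrm{nor}}=\bar q\,p^{M'}_{\mathrm{nor}}$ shows the image commutes with $\iota(JMJ)$, so it lands in $\tilde{\mathbb{S}}(M)q_{\mathbb{X}_N}q_{\mathbb{K}}^\perp$. This handles the mismatch between the $N$-$N$/$N'$-$N'$ closure of $\mathbb{K}(L^2M)$, which governs biexactness of $N$ on $L^2M$, and the $M$-$M$/$JMJ$-$JMJ$ closure, which governs $M$. Your ``amplification of $\mathbb{S}(N_i)$ indexed by cosets'' glosses over this mismatch.
\item Extend normally to the corner $(Me_N\mathbb{B}(L^2N)e_NM)^{**}=\bar q'\mathbb{B}(L^2M)^{**}\bar q'$, where $\bar q'=\bigvee_{u\in\mathcal{U}(M)}\iota(ue_Nu^*)$. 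Precompose with $\mathrm{Ad}(\bar q')\circ\iota|_M$, which is weak$^*$-nuclear by weak exactness after compressing by $p^M_{\mathrm{nor}}$ (a compression the normal extension does not see). This gives weak$^*$-nuclearity of $\mathrm{Ad}(q_{\mathbb{X}_N}q_{\mathbb{K}}^\perp)\circ\iota^\sharp|_M$.
\end{enumerate}
Nothing is ``composed'' with the relative factorization. Via the bidual characterization of \cite{toyosawa2025relative}, one splits $\iota^\sharp|_M$ along $q_{\mathbb{X}_{\mathcal N}}^\perp$, $q_{\mathbb{K}}$ and $q_{\mathbb{X}_{\mathcal N}}q_{\mathbb{K}}^\perp$. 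The first two pieces are weak$^*$-nuclear directly from relative biexactness; only the third needs the construction above.

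\textbf{Step 2 also contains concrete errors.}
\begin{itemize}
\item The identity $\mathbb{X}_{N_i}=\overline{\operatorname{span}}\{xe_{N_i}Te_{N_i}y\}$ is false in general because it drops the $JMJ$-translates. For $L(H)\subset L(\Gamma)$ the right side only sees finitely many left cosets $gH$, while $\mathbb{X}_{L(H)}$ sees the sets $gHg'$. In the paper the $JMJ$-direction is absorbed into the projection $q$, and the map is defined only on the smaller corner generated by $Me_NM$.
\item Since $\mathbb{X}_N$ is an $M$-bimodule, $x\mathbb{X}_Ny\subset\mathbb{X}_N$. Hence $\mathbb{X}_N\cap x\mathbb{X}_Ny=x\mathbb{X}_Ny$, which is all of $\mathbb{X}_N$ for unitaries $x,y$. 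This cannot be the analogue of almost malnormality; the correct analogue is the bidual identity above.
\item Your definition of mixing via $E_{N_i}(xu_ny)$ is the tracial one. The theorem concerns arbitrary separable $M$ and uses $e_NxJyJe_N\in\mathbb{K}^{\infty,1}(N)$.
\end{itemize}

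Step 1 is fine in spirit, provided it is carried out with the commuting support projections $q_{\mathbb{X}_{N_i}}$ in $\mathbb{B}(L^2M)^{\sharp*}_J$: inclusion--exclusion over finite subfamilies, then a limit. This is what the paper does.
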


We first clarify the precise technical meaning of ``commuting boundary pieces'' can be seen in Theorem \ref{thm: main}; it asks for the support projections of the boundary pieces in the relevant bidual space to commute. This condition may appear rather unmotivated to the naked eye, but it holds in most cases in practice. Mixingness is a standard notion in the bimodule theory of von Neumann algebras, motivated from the analogous notion in ergodic theory. For the definition we direct the reader to \cite{AP}. As a first concrete application of our result, we produce a new source of biexact von Neumann algebras arising from amalgamated free products. This result follows from Theorem \ref{main intro} combined with the relative biexactness result of \cite{toyosawa2025relative}. In particular this offers a significant generalization of Corollary 1.7 of \cite{toyosawa2025relative}.

\begin{corollary} 

        Let $M_1,M_2$ be two separable biexact tracial von Neumann algebras with a common subalgebra $B$. If $B$ is amenable and mixing in both $M_1$ and $M_2$, then the amalgamated free product $ M=M_1\ast_BM_2 $ is biexact.
        
\end{corollary}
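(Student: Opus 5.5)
The plan is to deduce the corollary directly from Theorem \ref{main intro}, applied to the family $\mathcal{N}=\{M_1,M_2\}$ of subalgebras of $M=M_1\ast_B M_2$. Both $M_j$ sit in $M$ with the canonical trace-preserving conditional expectations $E_{M_j}$, and $M$ is separable because $M_1,M_2$ are. The hypotheses of Theorem \ref{main intro} then split into three groups, which I check in turn:
\begin{enumerate}[label=(\arabic*)]
\item relative biexactness of $M$ with respect to $\{M_1,M_2\}$;
\item that each $M_j$ is biexact and mixing in $M$;
\item the commuting boundary pieces condition.
\end{enumerate}

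\textbf{Relative biexactness.} I invoke the result of \cite{toyosawa2025relative}. For an amalgamated free product over an amenable $B$, it shows that $M$ is biexact relative to $\{M_1,M_2\}$; this is the relative version that Toyosawa's Corollary 1.7 upgrades under stronger hypotheses. Amenability of $B$ is exactly what that result uses. Biexactness of $M_1$ and $M_2$ is given by hypothesis.

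\textbf{Mixing of $M_j\subset M$.} I use the standard free-product decomposition of $L^2M\ominus L^2M_1$ as an $M_1$-bimodule. It is a direct sum of bimodules of the form
\[
L^2M_1\otimes_B \mathcal{K}\otimes_B L^2M_1,
\]
where $\mathcal{K}$ runs over alternating tensor products of the spaces $L^2M_j\ominus L^2B$. So it suffices to show that for $x_n\in (M_1)_1$ with $x_n\to0$ weakly, and $\xi,\eta$ in this complement,
\[
\langle x_n\xi y,\eta\rangle\to0 \quad\text{uniformly over } y\in (M_1)_1 .
\]
Reducing to elementary tensors, the relevant inner products factor through $E_B(a^*x_nb)$ with $a,b\in M_1\ominus B$ (or through $E_B(x_n)$), and these tend to $0$ in $\|\cdot\|_2$ by mixing of $B\subset M_1$. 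Here the mixing assumption of \cite{AP} is used in the form $\|E_B(a x_n b)\|_2\to0$ for $a,b\in M_1\ominus B$, $x_n\in(B)_1$ weakly null, together with its extension to $x_n\in M_1$ obtained by splitting $x_n=E_B(x_n)+(x_n-E_B(x_n))$. I expect this bookkeeping to be routine but somewhat fiddly, especially tracking the middle letters from $M_2\ominus B$, which are handled by $B$ being mixing in $M_2$.

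\textbf{Commuting boundary pieces.} This is the main obstacle. The boundary piece generated by $M_j$ is supported on the closure of $M\,e_{M_j}M$ in $\mathbb{B}(L^2M)$. I need the two support projections in the bidual to commute. The approach I would try first is to show that the compact-type ideals generated by $e_{M_1}$ and $e_{M_2}$ intersect in the piece generated by $e_B$, and that their product lies there as well:
\[
e_{M_1}\,x\,e_{M_2}\;\in\;\overline{M e_B M} \qquad \text{for } x\in M.
\]
This follows since $E_{M_1}E_{M_2}=E_B$ on $M$ by freeness, giving $e_{M_1}xe_{M_2}=e_{M_1}E_{M_1}(x\,\cdot)\dots$, which reduces to words passing through $B$. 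Once the product of the generators lies in the intersection ideal, the support projections $p_1,p_2$ should satisfy
\[
p_1p_2=p_1\wedge p_2=p_2p_1 ,
\]
hence commute. If this direct argument fails, the fallback is to check that the relevant condition in Theorem \ref{thm: main} is automatic for any pair with $E_{M_1}E_{M_2}=E_{M_2}E_{M_1}$, which holds here since both equal $E_B$. Applying Theorem \ref{main intro} then yields that $M$ is biexact.
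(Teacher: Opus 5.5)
Your skeleton is the paper's: apply Theorem \ref{thm: main} to $\{M_1,M_2\}$, take relative biexactness from \cite{toyosawa2025relative}, use mixing of $M_j\subset M$, and check that the boundary-piece projections commute. The genuine gap is in that last step, which you correctly identify as the crux.

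The boundary piece $\XX_{M_j}$ is generated by $e_{M_j}$ under both $M$ and $JMJ$. Its support in $\B(L^2M)^{\sharp*}_J$ is therefore
\[
q_{\XX_{M_j}}=\vee_{u,v\in\mathcal{U}(M)}\iota^{\sharp}(uJvJe_{M_j}Jv^*Ju^*),
\]
not the support of $\overline{Me_{M_j}M}$. What must be shown is the hypothesis of Lemma \ref{lem: checking commuting}: $\overline{\iota^{\sharp}(e_1MJMJe_2)\KK}\subset q_{\XX_B}\KK$. In other words, you must control $e_1xJyJe_2$ for all $x,y\in M$. Your inclusion $e_1Me_2\subset\overline{Me_BM}$ says nothing about the $JMJ$-translates. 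They cannot be moved aside, because $JMJ$ commutes with neither $e_1$ nor $e_2$. The commuting-square identity $e_1e_2=e_B$ only disposes of the components of $x,y$ lying in $M_1$. Your fallback (commutation is automatic once $E_{M_1}E_{M_2}=E_{M_2}E_{M_1}$) is unsupported for the same reason: $[e_1,e_2]=0$ gives no information about $[e_1,uJvJe_2Jv^*Ju^*]$.

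What works is a letter-stripping argument, as in the paper's subgraph-product lemma.
\begin{enumerate}
\item Write $x$ as a sum of terms $mx'$ and $y^*$ as a sum of terms $y''n$ with $m,n\in M_1$. Here $x'$ is $1$ or a reduced word beginning with a letter of $M_2\ominus B$, and $y''$ is $1$ or a reduced word ending with such a letter.
\item Then $m$ and $Jn^*J$ commute past $e_1$.
\item For $\xi\in L^2M_1\ominus L^2B$, the vector $x'^*\xi(y'')^*$ is a reduced word in which $\xi$ survives as a letter, so it is orthogonal to $L^2M_2$.
\item Taking adjoints gives $(e_1-e_B)x'J(y'')^*Je_2=0$. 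Hence $e_1xJyJe_2$ lies in the span of $M_1JM_1J\,e_B\B(L^2M)$.
\item Lemma \ref{lem: checking commuting}, the rigorous $\iota^\sharp$-level form of your heuristic $p_1p_2=p_1\wedge p_2$, then gives $[q_1,q_2]=0$.
\end{enumerate}
Splitting off $E_{M_1}(x)$ and $E_{M_1}(y)$ alone is not enough. In $L\langle u\rangle*L\langle v\rangle$ one has $(e_1-e_{\C})\lambda_{uv}J\lambda_{u^{-1}v}Je_2\delta_e=\delta_{u^2}\neq0$, with $\lambda_{uv},\lambda_{u^{-1}v}\in M\ominus M_1$. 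So the identity $(e_1-e_B)(M\ominus M_1)J(M\ominus M_1)Je_2=0$ written in the paper's proof also needs this refinement.

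A smaller point concerns your mixing step. For weakly null $x_n\in(M_1)_1$, the elements $E_B(a^*x_nb)$ need not tend to $0$ in $\|\cdot\|_2$; already $E_B(x_n)=x_n$ for weakly null unitaries $x_n\in B$. Splitting $x_n=E_B(x_n)+(x_n-E_B(x_n))$ does not repair this: these elements only tend to $0$ weakly. The decay has to come from the inner $B$-bimodule built from $L^2M_2\ominus L^2B$, which is mixing because $B\subset M_2$ is mixing. That bimodule gives convergence uniformly over the bounded right coefficients. So mixing of $M_1\subset M$ is driven by mixing of $B\subset M_2$, and symmetrically. The paper simply cites \cite[Lemma 5.5]{toyosawa2025relative} for this.
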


As a simple case, our main Theorem \ref{main intro} yields examples even at the group level. Let $G$ be a countable group. Recall that a subgroup $H<G$ is \emph{almost malnormal} if $gHg^{-1}\cap H$ is finite for every $g\in G\setminus H$. It is well known that $H<G$ is almost malnormal implies that $L(H)\subset L(G)$ is mixing (see for instance \cite{BoutonnetCarderi2017}). Now let $G$ be a countable group that is biexact relative to a family of subgroups $ \{H_i\}_{i\in I} $. Theorem \ref{main intro} then implies that if each $H_i$ is biexact, and almost malnormal in $G$, then $G$ is biexact. In Section \ref{relatively biexact section end} we give two independent and more accessible proofs of this (this point will be revisited at the end of the introduction). This result allows us to access biexactness for a concretely new family of groups combining with the result of Oyakawa \cite{oyakawa2024infinite}.

\begin{corollary}\label{Koichi}
    Suppose that $\Gamma$ is a uniformly fine hyperbolic countable graph with $\operatorname{girth}(\Gamma) > 20$ and that $\mathcal{G} = \{G_v\}_{v \in V(\Gamma)}$ is a collection of non-trivial finite groups. Then, the graph product $\Gamma\mathcal{G}$ is biexact.
\end{corollary}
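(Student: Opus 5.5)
The plan is to reduce Corollary \ref{Koichi} to the group-level consequence of Theorem \ref{main intro} stated just before it. That consequence says: if $G$ is biexact relative to a family $\{H_i\}_{i\in I}$, and each $H_i$ is biexact and almost malnormal in $G$, then $G$ is biexact. For $G=\Gamma\mathcal{G}$ we take as the family the vertex-link subgroups arising in Oyakawa's work \cite{oyakawa2024infinite}. For a uniformly fine hyperbolic graph $\Gamma$ of girth $>20$, Oyakawa shows that $\Gamma\mathcal{G}$ is relatively hyperbolic, or at least biexact relative to a family of subgroups. I expect these subgroups to be the star subgroups $G_{\mathrm{st}(v)}=G_v\times \Gamma_{\mathrm{lk}(v)}\mathcal{G}$, $v\in V(\Gamma)$. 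Since the girth exceeds $3$, the link $\mathrm{lk}(v)$ has no edges, so $\Gamma_{\mathrm{lk}(v)}\mathcal{G}=\ast_{w\in \mathrm{lk}(v)}G_w$. I would first record precisely which statement of \cite{oyakawa2024infinite} gives relative biexactness, or relative hyperbolicity, which implies it. Relative hyperbolicity yields relative biexactness by standard results.

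Second, we check that each peripheral subgroup is biexact. The group $G_v\times \ast_{w\in\mathrm{lk}(v)}G_w$ is a finite group times a free product of finite groups. A free product of finite groups is virtually free, hence biexact. Biexactness is stable under taking a direct product with a finite group, since that is a finite-index overgroup and biexactness passes to commensurable groups. So each $H_v$ is biexact.

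Third, we check almost malnormality. In a relatively hyperbolic group the peripheral subgroups are automatically almost malnormal, so this is immediate if Oyakawa's output is relative hyperbolicity. If only relative biexactness is available, I would verify it directly with the normal form theory for graph products. For $g\notin H_v$ we have $gH_vg^{-1}\cap H_v\subseteq$ the subgroup generated by vertex groups in $\mathrm{st}(v)\cap \mathrm{st}(u)$ after reduction. The girth assumption makes $\mathrm{st}(v)\cap\mathrm{st}(u)$ consist of at most a single vertex or edge for $u\neq v$, so the intersection is finite. Finally, the commuting boundary piece hypothesis holds automatically at the group level, because all boundary pieces come from $\ell^\infty(G)$ and hence commute.

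The main obstacle is the third step, together with matching Oyakawa's precise hypotheses and output to the family used. In particular, we must ensure the peripheral family is exactly one for which almost malnormality holds. If Oyakawa's subgroups were larger, for instance stars of larger subgraphs, the finiteness of conjugate intersections would need a careful reduced-word argument using that the girth exceeds $20$.
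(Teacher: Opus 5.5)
Your overall route is the one the paper intends. You take Oyakawa's relative biexactness of $\Gamma\mathcal{G}$ with respect to the star subgroups $G_{\mathrm{st}(v)}$ and feed it into the group-level consequence of Theorem \ref{main intro} for biexact, almost malnormal subgroups. A result relative to the link subgroups would also suffice, since enlarging the family only enlarges the boundary piece. Your almost malnormality check is also the right one. Replace $g\notin H_v$ by a minimal representative of $H_vgH_v$; its support contains a vertex $u\notin\mathrm{st}(v)$. The intersection then lies in an $H_v$-conjugate of $G_{\mathrm{lk}(v)\cap\mathrm{lk}(u)}$, and $\mathrm{lk}(v)\cap\mathrm{lk}(u)$ has at most one vertex because the girth exceeds $4$.

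One step is justified incorrectly, and it is exactly the case in which the corollary says something new. Uniform fineness does not bound valences, so $\mathrm{lk}(v)$ may be infinite. Then $\ast_{w\in\mathrm{lk}(v)}G_w$ need not be virtually free. For example, in $\ast_{n\geq 2}\mathbb{Z}/n$ a subgroup of index $k$ meets every factor of order larger than $k$ nontrivially, so no finite-index subgroup is torsion-free. (If all links are finite, the star subgroups are finite and relative biexactness is already biexactness, so nothing needs upgrading.)

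The conclusion you need is still true: a countable free product of finite groups is biexact. One way to see this is Theorem \ref{thm: relative biexact graph product} applied to the edgeless graph on $\mathrm{lk}(v)$. Every link there is empty, so the relative family is just $\{\mathbb{C}\}$, and Theorem \ref{thm: DP 6.2} then transfers biexactness to the group. Alternatively, use the action on the Bass--Serre tree, which has finite vertex stabilizers and trivial edge stabilizers. With this replacement, and the finite direct factor $G_v$ handled by your finite-index argument, the proof goes through.
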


As the above result suggests, a natural setting where our main result can apply naturally is to the setting of graph products of von Neumann algebras \cite{CaFi17}, which has seen immense activity in recent years (see \cite{horbez2025rigiditygraphproductvon} and the references therein). In fact, our next main results will give vast generalizations of the above Corollary \ref{Koichi}.  We first prove the correct relative biexactness result here, generalizing the prior work of Hoshino \cite{hoshino2026relative}. In particular, we prove biexactness relative to the family of links in arbitrary graph products of von Neumann algebras.

\begin{thm}
         If $\GG$ is a graph and $M_v$ is biexact for each $v\in \mathcal{V}(\GG)$, then the graph product von Neumann algebra $ M=\ast_{v\in \GG}(M,\varphi_v)$ is biexact relative to the family $ \{ M_{\text{link}_v} \}_{v\in \VV}$, where $  M_{\text{link}_v} = \ast_{v\in \text{link}_v} (M_v,\varphi_v) $.
\end{thm}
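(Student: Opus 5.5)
We plan to follow Hoshino's group-level strategy, transported to the von Neumann algebra setting through the boundary-piece framework of \cite{DKEP23, ding2023biexact}. Write $\mathbb{X}$ for the boundary piece generated by the family $\{M_{\text{link}_v}\}_{v\in\VV}$. By the characterization of relative biexactness in \cite{ding2023biexact}, it suffices to show that the inclusion $M\subset \mathbb{S}_{\mathbb{X}}(M)$ is $M$-nuclear, i.e.\ weakly nuclear relative to the small-at-infinity boundary associated to $\mathbb{X}$. Our strategy has two stages. First, we reduce to a Fock-space model of $L^2(M)$ indexed by reduced words in the right-angled Coxeter-type monoid of $\GG$. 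Second, we build the nuclear approximations by combining the biexactness of each vertex algebra with a ``last letter'' (or ``first clique'') decomposition of reduced words.

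The first step is the decomposition. We write $L^2(M)=\mathbb{C}\Omega\oplus\bigoplus_{\mathbf{w}}\mathring{H}_{w_1}\otimes\cdots\otimes\mathring{H}_{w_n}$, where $\mathbf{w}$ runs over reduced words. For each vertex $v$ we let $P_v$ denote the projection onto the span of words that can be written ending in $v$. The right action of $M_v$ preserves the range of $P_v$ up to the link. Concretely, for $x\in M$ and $a\in M_v$ the commutator $[P_v, JaJ]$, and also $[P_v,x]$ for $x$ in a vertex algebra, is supported on words whose interaction with $v$ passes through $\text{star}_v$. The main lemma to establish is:
\[
[P_v, JyJ]\in \mathbb{K}_{\mathbb{X}}^{\infty,1}(M) \quad \text{for } y\in M,
\]
or more precisely that these commutators lie in the boundary piece generated by $M_{\text{link}_v}$ (and $M_{\text{star}_v}$). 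The underlying reason is that a change of the last letter only happens when the word lies in the star of $v$, and the resulting operator factors through $L^2(M)\otimes_{M_{\text{link}_v}}L^2(M)$-type compacts.

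The second step uses biexactness of each $M_v$. Each $M_v$ is biexact, so $M_v\subset\mathbb{S}(M_v)$ is $M_v$-nuclear. We then induce these maps to $M$ via the conditional expectation and the $P_v$-decomposition. This yields u.c.p.\ maps $\phi_v$ that approximately factor through matrix algebras. We glue them into a map on $M$ by $x\mapsto\sum_v P_v\,\phi_v(\cdot)\,P_v$ together with the $\Omega$ component. The map is checked on generators coming from a single vertex algebra, and the result is extended to $M$ because nuclearity relative to $\mathbb{S}_{\mathbb{X}}$ passes to the generated algebra. For this extension we would invoke the permanence results of \cite{ding2023biexact}, namely free-product-type closure under relative biexactness, in place of the amenable-action argument used for groups.

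The main obstacle is the non-tracial, state-preserving setting. In that setting $J$ and the modular theory interfere with the commutator estimates, so the word decomposition must be done with respect to $\varphi$ using the modular-invariant vertex spaces. We would first settle the tracial case and then handle the general case by checking that the $P_v$ commute with the modular operators, which should follow because $\Delta$ acts letterwise on the word decomposition.
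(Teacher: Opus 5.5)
\textbf{The main gap is your second step: it never produces c.c.p.\ maps on $M$ that factor through matrix algebras and approximate the identity.}

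A map of the form $x\mapsto\sum_v P_v\phi_v(\cdot)P_v$ cannot approximate the identity on a reduced word $a_1a_2\cdots a_n$ with letters from different vertex algebras. Each $\phi_v$ only sees $M_v$, e.g.\ through $E_{M_v}$, which kills such a word. Moreover, when $u\sim v$ the projections $P_u,P_v$ overlap, so the sum is not even contractive.

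Nor is there a permanence principle saying that nuclearity into $\S_{\XX}(M)$ passes from generating subalgebras to the algebra they generate. For instance, $M=L(\mathbb{F}_2\times\mathbb{F}_2)$ is generated by copies of $L(\mathbb{Z})$. Each of these embeds $M$-nuclearly into $\S(M)$ by amenability, yet $M$ is not biexact. The free-product permanence result of \cite{ding2023biexact} is not a black box you can apply to graph products with the link boundary piece. Its proof is exactly the mechanism you are missing: take graph products of the vertex approximations.

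Concretely, \cite[Lemma 5.9]{ding2023biexact} gives \emph{state-preserving} u.c.p.\ maps
$\phi^v_k:(M_v,\varphi_v)\to(\M_{n(k,v)}(\C),\mu^v_k)$ and $\psi^v_k:(\M_{n(k,v)}(\C),\mu^v_k)\to\S(M_v)$. Here $\mu^v_k$ is a \emph{pure} state, and $\psi^v_k\phi^v_k(x)-x=T^v_k+x^v_k$ with $\|T^v_k\|\to0$ and $\varphi_v((x^v_k)^*x^v_k)\to0$.
\begin{enumerate}
\item State preservation lets you form the graph product maps $\phi_k:M_0\to B_k$ and $\psi_k:B_k\to S$ \cite{atkinson2019graph}. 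Here $M_0$ is the reduced C$^*$-graph product of the $M_v$, $B_k$ is that of the matrix algebras, and $S$ is the reduced graph product of the operator systems $\S(M_v)$.
\item Purity is what makes $B_k$ nuclear \cite[Proposition 3.1]{hoshino2026relative}. With traces this fails: the reduced free product of two copies of $(\M_2(\C),\mathrm{tr})$ is not nuclear.
\item A telescoping argument on reduced words, using the error decomposition, gives $\psi_k\phi_k\to\mathrm{id}$ in the weak $(M_0\subset M)$-topology.
\item \cite[Corollary 4.9]{ding2023biexact} then passes from the ultraweakly dense $M_0$ to $M$.
\end{enumerate}
This also makes your stated ``main obstacle'' moot: with state-preserving maps the non-tracial case needs no modular bookkeeping.

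\textbf{Your first step works at the wrong end of the words, and as stated the key lemma is false.} $JMJ$ acts on the right, so it changes the \emph{last} letters. Suppose $P_v$ projects onto words that can end in $v$, and take $a\in M_u$ with $\varphi_u(a)=0$ and $u\notin\text{star}_v$. Then $JaJ$ maps the range of $P_v$ into its orthogonal complement, so $[P_v,JaJ]P_v=-JaJP_v$. This is not in $\KKK^{\infty,1}_{\XX}(M)$. Already for $L(\mathbb{Z}\ast\mathbb{Z})$, where $\XX=\K(L^2M)$, it would force the indicator of an infinite set of group elements into $c_0$.

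With words that can \emph{begin} with $v$, the computation works. Then $[P_v,JaJ]$ vanishes unless $u=v$, and it lives on $L^2M_{\text{star}_v}\cong L^2M_v\otimes L^2M_{\text{link}_v}$ as (finite rank)$\otimes 1$, which lies in $\XX_{M_{\text{link}_v}}$.

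Even so, this is not the statement the argument needs. You must show that the whole target $S$ lies in $\S_{\XX}(M)$. Take $T\in\S(M_v)$ acting on the first $v$-leg and $a\in M_u$. On a reduced word $w_1\cdots w_n$, $[T,JaJ]$ vanishes unless $u=v=w_1$ and $w_2,\dots,w_n\in\text{link}_{w_1}$. Hence $[T,JaJ]\in p(\KKK^{\infty,1}(M_v)\otimes\B(L^2M_{\text{link}_v}))p\subset\KKK^{\infty,1}_{\XX}(M)$, with $p$ the projection onto $L^2M_{\text{star}_v}$. One reduces to such generators via \cite[Lemma 6.1]{DKEP23}. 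Your $P_v$ is only the special case $T=1-e_{\C\hat 1}$.
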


Combining the above result and the bimodule calculation from \cite{charlesworth2025structure} with our main Theorem \ref{thm: main}, we are able to obtain a full classification of biexactness for graph products of finite dimensional von Neumann algebras.

\begin{corollary}\label{graph prod classification corollary}
       Let $\GG$ be a simple graph without an infinite clique. The followings are equivalent:
    \begin{enumerate}
        \item For any family of finite dimensional von Neumann algebras each with dimension greater than 2 and with faithful normal states $\{(M_v,\varphi_v)\}_{v\in \VV(\GG)}$, the graph product von Neumann $M=\ast_{v\in \GG}(M_v,\varphi_v)$ is biexact.
        \item There is no square in $\GG$: there does not exist distinct vertices $v_1,v_2,u_1,u_2\in \VV(\GG)$ such that $ v_1\nsim v_2$, $u_1\nsim u_2$, and $ v_i\sim u_j $ for each $i,j=1,2$.
    \end{enumerate}
\end{corollary}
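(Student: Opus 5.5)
We prove the equivalence (1)$\Leftrightarrow$(2) by handling the two directions separately. For (2)$\Rightarrow$(1) we combine the relative biexactness theorem for graph products with Theorem \ref{main intro}. For (1)$\Rightarrow$(2) we use the standard obstruction to biexactness coming from tensor product subalgebras.

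\emph{(2)$\Rightarrow$(1).} Suppose $\GG$ has no square and no infinite clique. Each $M_v$ is finite dimensional, hence biexact, so the preceding theorem shows that $M$ is biexact relative to $\{M_{\text{link}_v}\}_{v\in\VV}$. To apply Theorem \ref{main intro}, we verify three things for each $v$.

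\begin{enumerate}
\item \emph{$M_{\text{link}_v}$ is biexact.} Since $\GG$ has no square, $\text{link}_v$ contains no pair of non-adjacent vertices $u_1\nsim u_2$ together with a second pair $w_1\nsim w_2$ all mutually adjacent. Concretely, $\text{link}_v$ decomposes as a join $C\ast D$, where $C$ is a clique and $D$ has no induced join of two non-complete graphs. If some $w\in\text{link}_v$ is adjacent to two non-adjacent vertices $u_1,u_2$ of $\text{link}_v$, then $v,w,u_1,u_2$ yield a square only if we also have a second vertex non-adjacent to $v$. We must therefore argue more carefully, inducting on the structure of $\text{link}_v$. Since there is no infinite clique and cliques give finite-dimensional tensor factors, $M_{\text{link}_v}$ is a finite-dimensional algebra tensored with a graph product over a square-free, join-irreducible graph. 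We prove biexactness of such algebras by induction, again using the relative biexactness theorem and Theorem \ref{main intro}. Tensoring with a finite-dimensional algebra preserves biexactness.
\item \emph{$M_{\text{link}_v}\subset M$ is mixing.} This is the bimodule computation from \cite{charlesworth2025structure}. The $L^2M$ decomposition into reduced words shows that $L^2M\ominus L^2M_{\text{link}_v}$, viewed as an $M_{\text{link}_v}$-bimodule, is a direct sum of coarse-type bimodules twisted by subalgebras $M_{\text{link}_v\cap \text{link}_w}$ for $w\ne v$. Absence of squares forces $\text{link}_v\cap\text{link}_w$ to be a clique, hence finite dimensional. The bimodule is then a multiple of the coarse bimodule up to finite-dimensional amplification, which gives mixing.
\item \emph{The boundary pieces commute.} The supports in the bidual are determined by word-length projections on the Fock-type decomposition of $L^2M$, and these are all diagonal in the same basis of reduced-word subspaces, so they commute.
\end{enumerate}

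Applying Theorem \ref{main intro} then gives (1).

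\emph{(1)$\Rightarrow$(2).} Suppose there is a square $v_1,v_2,u_1,u_2$. Then $M$ contains, with expectation, $(M_{v_1}\ast M_{v_2})\bar\otimes(M_{u_1}\ast M_{u_2})$. Since $\dim M_v>2$, each free product $M_{v_1}\ast M_{v_2}$ is diffuse and nonamenable; by Dykema's results it is an interpolated free group factor or contains a nonamenable diffuse part. Thus $M$ contains a diffuse subalgebra whose relative commutant is nonamenable. This contradicts solidity, which follows from biexactness by \cite{ding2023biexact}, and biexactness passes to subalgebras with expectation. The dimension hypothesis is exactly what rules out $\C^2\ast\C^2$, which is amenable.

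The main obstacle is step 2 together with the induction in step 1: showing that square-freeness makes each link inclusion mixing. This requires identifying precisely which intersections of links can be infinite dimensional and checking that no non-clique intersection occurs. I would try first to compute the $M_{\text{link}_v}$-bimodule $L^2M$ explicitly via reduced words, following \cite{charlesworth2025structure}.
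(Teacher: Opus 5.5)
\emph{Step 2 of (2)$\Rightarrow$(1) is false, and the whole direction rests on it.} $M_{\text{link}_v}$ is in general \emph{not} mixing in $M$, however square-free $\GG$ is.

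\begin{itemize}
\item The vertex algebra $M_v$ commutes with $N:=M_{\text{link}_v}$, and $M_v\ominus\C\subset M\ominus N$.
\item For $x,y\in M_v$ with $\varphi_v(x)=\varphi_v(y)=0$ you get $e_NxJyJe_N=\langle xJyJ\hat1,\hat1\rangle\, e_N$. For instance it equals $\tau(xx^*)e_N$ when $y=x$ in the tracial case.
\item So mixing would force $1_{L^2N}\in\KKK^{\infty,1}(N)$. This fails once $N$ is diffuse. Take a weak$^*$ limit of the vector states at $u_n\hat1$, with unitaries $u_n\in N$ tending weakly to $0$. It is normal on $N$ and $JNJ$, vanishes on $\K(L^2N)$, and equals $1$ at $1$.
\end{itemize}

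The path graph $a\sim v\sim b$ with $a\nsim b$ already shows this: there $M=M_v\bar\otimes(M_a\ast M_b)$ and $M_{\text{link}_v}=1\otimes(M_a\ast M_b)$. Your bimodule description omits the summand $\overline{(M_v\ominus\C)\hat1}\otimes L^2N$ coming from $v$ itself, which is a multiple of the trivial bimodule. Also, your claim that $\text{link}_v\cap\text{link}_w$ is a clique for every $w\neq v$ is wrong for $w\sim v$: take $v\sim w$, both adjacent to $a\nsim b$; there is no square.

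The missing idea, which is what the paper does, is to enlarge links to stars.
\begin{itemize}
\item Since $\XX_{\{M_{\text{link}_v}\}}\subset\XX_{\{M_{\text{star}_v}\}}$, $M$ is also biexact relative to $\{M_{\text{star}_v}\}_v$.
\item For $u\notin\text{star}_v$ one has $\text{star}_v\cap\text{link}_u=\text{link}_v\cap\text{link}_u$, and square-freeness forces this to be a clique.
\item Hence, by the computation in \cite{charlesworth2025structure}, $L^2M\ominus L^2M_{\text{star}_v}$ is a sum of bimodules $L^2M_{\text{star}_v}\otimes_D L^2M_{\text{star}_v}$ with $D$ finite dimensional. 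It is therefore coarse, hence mixing by Lemma~\ref{lem: coarse implies mixing}.
\item Finally, $M_{\text{star}_v}=M_{\text{link}_v}\bar\otimes M_v$ is biexact iff $M_{\text{link}_v}$ is.
\end{itemize}
The paper treats separately the case $\text{star}_{v_0}=\GG$, where $M=M_{\text{link}_{v_0}}\bar\otimes M_{v_0}$ directly.

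\emph{Step 1 also needs repair.} Your induction has no well-founded parameter, since $\GG$ may be infinite with cliques of unbounded size, and the join decomposition is a detour. The paper first proves, for every square-free graph, the implication ``all $M_{\text{link}_v}$ biexact $\Rightarrow$ $M$ biexact.'' It then argues by contradiction. The link of $u$ inside $\text{link}_{v_1}$ is $\text{link}_{v_1}\cap\text{link}_u$, so non-biexactness of $M$ propagates to $M_{\text{link}_{v_1}}$, then to $M_{\text{link}_{v_1}\cap\text{link}_{v_2}}$, and so on. This produces pairwise adjacent $v_1,v_2,\dots$, i.e.\ an infinite clique.

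\emph{Step 3's justification does not work.} The projection $q_{\XX_{M_i}}=\vee_{u,v}\iota^\sharp(uJvJe_{M_i}Jv^*Ju^*)$ involves arbitrary unitaries $u,v$. These projections are not diagonal for the reduced-word decomposition, so no common-basis argument applies. The paper instead verifies the hypothesis of Lemma~\ref{lem: checking commuting}, namely $\iota^\sharp(e_1xJyJe_2)\KK\subset q_{\XX_N}\KK$ with $N=M_1\cap M_2$. It does this by stripping leading letters from $\GG_1$ and showing $(e_1-e)xJyJe_2=0$.

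\emph{(1)$\Rightarrow$(2) matches the paper,} with one inaccuracy. $M_{v_1}\ast M_{v_2}$ need not be diffuse: Dykema's decomposition can have atoms for skewed states. Since (1) is quantified over all families, it is cleaner to take $M_v=\M_2(\C)$ with traces, as the paper does. Then both free products are nonamenable II$_1$ factors.
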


 As a quick remark, note that the condition in Corollary \ref{graph prod classification corollary} that each vertex algebra has dimension greater than $2$ is added to facilitate a clean statement and avoid the nuisance arising from the fact that free products of two dimensional von Neumann algebras are amenable. Corollary \ref{graph prod classification corollary} extends the strong solidity results obtained in \cite{BorstCaspers2024}. Indeed, arbitrary graph products of finite dimensional von Neumann algebras possess the complete metric approximation property \cite{borst2024ccap}, and together with biexactness this yields strong solidity. Note that the result of \cite{blufstein2025strongsolidityclassificationcoxeter} proves biexactness for right angled Coxeter groups whose von Neumann algebras are strongly solid. Therefore our results can also be viewed as an extension of \cite{blufstein2025strongsolidityclassificationcoxeter} in a different direction.

We now discuss some of the insights going into the proof of Theorem \ref{thm: main}. A systematic approach of relating the small at infinity boundary of a von Neumann algebra to its relative small at infinity boundary with respect to mixing subalgebras was initiated by \cite{DKE24}, under technical assumptions regarding the Pimsner-Popa basis (see also \cite{ding2024first} which handles the group case). In \cite{toyosawa2025relative}, this technique is generalized to arbitrary mixing subalgebras $N \subset M$ with expecation. An essential idea in \cite{DKE24,toyosawa2025relative} is that if we consider the small at infinity boundary $\tilde{\S}(M)$ inside the bidual (see Section 2.2 for the precise definition), then we have access to the necessary projections to cut the embedding $M \hookrightarrow\tilde{\S}(M)$ into three pieces. The key to this decomposition is that two of these pieces belong entirely to the relative small at infinity boundary. To deal with the remaining piece ($ \tilde{\S}(M)q_{\XX_{N}}q_{\KKK}^{\perp}$ in Corollary \ref{cor: biexact mixing subalg}), we need the observation that there is another copy of the basic construction $\langle M, e_N \rangle$ inside. Using this observation, it is shown in \cite{toyosawa2025relative} that if $M$ is biexact relative to a mixing amenable subalgebra $N$, then $M$ is biexact. This proof heavily relies on the amenability of $N$ to guarantee the weak nuclearity of the embedding $M\subset \langle M,e_N\rangle$. This is exactly where the difficulty lies in generalizing the argument (to bypass amenability). We are able to overcome this difficulty with a new insight of extending the related map $ \langle M, e_N \rangle \to \tilde{\S}(M)$ to the whole $\B(L^2M)$ when $N$ is biexact. This is achieved in Sections 3.1 and 3.2 by carefully applying the biexactness of $N$ and dealing with subtle aspects of the related $N$ and $N'$ bimodule topology in $\B(L^2M)$. This then allows us to account for all the pieces and follow the upgrading procedure to get the desired result. 

We conclude the introduction by discussing another result, suggested to us by J. Peterson, that characterizes relative biexactness in the group setting.  We include it here because it is of independent interest and conceptually bridges our general von Neumann algebra approach with the group setting. Specifically, while there is already a shorter proof of the group theoretic analogue of Theorem \ref{main intro} (see Corollary \ref{cor: group case main thm}), this characterization provides a unified approach to adapt the techniques from our main theorem into an alternative, purely group-theoretic proof for countable $\Gamma$.

\begin{thm}
    Let $\Gamma$ be a countable discrete exact group, and $I\subset \ell^\infty(\Gamma)$ be a $\Gamma$-boundary piece. The followings are equivalent
    \begin{enumerate}
        \item $\Gamma$ is biexact relative to $I$.
        \item For every finite subset $ E\subset \Gamma $ and $\varepsilon>0$, there exists $\mu: \Gamma\to \text{Prob}(\Gamma)$ such that for all $s,t\in \Gamma$, the subset
        \[ \{x\in \Gamma: \|\mu(sxt)-s\cdot\mu(x)\|_1\geq \varepsilon\} \] is small relative to $I$. 
        \item The left action of $\Gamma$ on the abelian von Neumann algebra
    \[ \left [\left(\ell^\infty( \Gamma )/I\right)^{**}\right]^{\Gamma_r} \] is amenable in the sense of Zimmer.
    \item For every finite subset $E\subset \Gamma$ and every $\delta>0$, there exists a finite subset $K\subset \Gamma$ and elements $a_g\in (\ell^\infty(\Gamma)/I)_+$ with $g\in K$, such that $ \sum_{g\in K}a_g=1 $ and, putting $a_g=0$ outside $K$, for all $s\in E$,
    \[ \|\sum_{g\in \Gamma}|s\cdot a_g-a_{sg}|\|<\delta,\quad \|   \sum_{g\in \Gamma}| \rho_s(a_g)-a_{g}|\|<\delta\]
    \end{enumerate}
\end{thm}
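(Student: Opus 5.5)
We prove the cycle (1)$\Leftrightarrow$(3)$\Leftrightarrow$(4) and (1)$\Leftrightarrow$(2). We take as the definition of (1) the standard one (Ozawa, Brown--Ozawa Ch.~15): the left action of $\Gamma$ on the spectrum of
\[ C = \{ f\in \ell^\infty(\Gamma) : f - \rho_t f \in I \text{ for all } t\in\Gamma\}/I \]
is topologically amenable. This action is by $*$-automorphisms of a unital abelian $C^*$-algebra. Exactness of $\Gamma$ (equivalently, amenability of $\Gamma\curvearrowright\beta\Gamma$) is what lets us pass between statements about $C$ and statements about $\ell^\infty(\Gamma)/I$, where we have room to construct functions.

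\textbf{(1)$\Leftrightarrow$(2).} This is the group-level translation of topological amenability, essentially as in Brown--Ozawa, Lemma 15.1.4 and Proposition 15.2.3. Topological amenability of $\Gamma\curvearrowright \mathrm{Spec}(C)$ gives, for each finite $E$ and $\varepsilon>0$, finitely supported continuous maps $\xi:\mathrm{Spec}(C)\to \mathrm{Prob}(\Gamma)$ that are $\varepsilon$-equivariant on $E$. Lift each coordinate to $\ell^\infty(\Gamma)$ and normalize positively; this gives $\mu:\Gamma\to\mathrm{Prob}(\Gamma)$. Equivariance modulo $I$ on the left, together with invariance modulo $I$ under right translation $\rho_t$, shows that the set $\{x : \|\mu(sxt)-s\cdot\mu(x)\|_1\ge\varepsilon\}$ is small relative to $I$.

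Conversely, given such $\mu$, the functions $x\mapsto\mu(x)(g)$ define elements of $\ell^\infty(\Gamma)/I$. The $\rho$-part of the hypothesis only gives approximate right invariance, not membership in $C$. To fix this we average over the right action, which is where exactness enters. Alternatively, we avoid the issue by routing through (4): (2) gives (4) directly, with $a_g$ the class of $x\mapsto\mu(x)(g)$, after truncating to a finite support $K$ (using exactness to make the $\mu(x)$ uniformly finitely supported up to $\varepsilon$).

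\textbf{(4)$\Rightarrow$(3)$\Rightarrow$(1).} Pass to the bidual $(\ell^\infty(\Gamma)/I)^{**}$, on which $\Gamma\times\Gamma$ acts normally by $\lambda$ and $\rho$. Take a weak$^*$ cluster point over the net $(E,\delta)$ of the elements $a_g$ from (4). The second inequality in (4) makes the limit $\rho$-fixed, so it lies in $A = [(\ell^\infty(\Gamma)/I)^{**}]^{\Gamma_r}$. The first inequality gives an approximately invariant net $\Gamma\to A_+$ with respect to the norm $\|\sum_g|\cdot|\|$. This is the Anantharaman-Delaroche criterion for Zimmer amenability of $\Gamma\curvearrowright A$, which gives (3).

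For (3)$\Rightarrow$(1), observe that $C^{**}$ identifies with a corner of $A$: more precisely, $A$ is the $\Gamma_r$-fixed part of the bidual, and $C\subset A$ is weak$^*$ dense in the appropriate sense. Then apply the known equivalence, for actions on unital abelian $C^*$-algebras, between amenability of the $C^*$-action and Zimmer amenability of the action on the bidual (Anantharaman-Delaroche, or Brown--Ozawa Proposition 4.4.x).

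\textbf{(1)$\Rightarrow$(4).} Topological amenability gives elements $b_g\in C_+$ that are finitely supported, sum to $1$, and are approximately invariant in the norm $\|\sum_g|\cdot|\|$. Lift them to $\ell^\infty(\Gamma)/I$; they are exactly right-invariant there, so the second inequality in (4) holds trivially.

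The main obstacle is (3)$\Rightarrow$(1), and more precisely identifying the $\Gamma_r$-fixed points of the bidual with the bidual of the fixed-point algebra $C$. Taking fixed points does not commute with biduals in general, since $\Gamma$ is not amenable. We plan to handle this using exactness as follows:
\begin{enumerate}
\item Amenability of $\Gamma\curvearrowright\beta\Gamma$ gives a $\Gamma$-equivariant conditional-expectation-like u.c.p. map from $\ell^\infty(\Gamma)^{**}$ onto approximately $\rho$-invariant elements.
\item Alternatively, we work directly with the Zimmer amenability criterion on $A$: produce approximately invariant elements of $A$, then push them down to $C$ by a normal $\Gamma$-equivariant map $A\to C^{**}$.
\item Finally, use Kaplansky density to return to $C$ with norm control, noting that the norms $\|\sum_g|\cdot|\|$ involved are weak$^*$ lower semicontinuous only after a convexity (Hahn--Banach) argument.
\end{enumerate}
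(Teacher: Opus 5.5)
There is a genuine gap: you never get from (3) or (4) back to (1).

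Your (1)$\Rightarrow$(4) is correct, and shorter than the paper's route through (3). A finitely supported, approximately equivariant partition of unity in $\mathbb{S}_I(\Gamma)/I$ already lies in $\ell^\infty(\Gamma)/I$ and is exactly $\rho$-invariant.

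\textbf{(4)$\Rightarrow$(3) fails.} A weak$^*$ cluster point of the families $(a_g)$ along the net $(E,\delta)$ is a single family, and nothing forces $\sum_g\alpha_g=1$, because the supports $K$ can escape to infinity. Replacing $a_g$ by $a_{gh}$ preserves both inequalities in (4) (reindex by $k=gh$). Choosing $h=h(E,\delta)\to\infty$ then makes every weak$^*$ limit $\alpha_g$ equal to $0$. Fixing the left tolerance and shrinking only the right one does not help, since $K$ is not controlled.

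\textbf{(3)$\Rightarrow$(1) is not proved, and the proposed mechanism points the wrong way.} The natural normal $\Gamma$-equivariant map is $C^{**}\to A$, the extension of $C\subset\ell^\infty(\Gamma)/I$, landing in the $\rho$-fixed part. Amenability transfers from the source of an equivariant u.c.p.\ map to an abelian target, so this only yields (1)$\Rightarrow$(3), which is exactly the paper's proof of that implication. For (3)$\Rightarrow$(1) you would need an equivariant u.c.p.\ map $A\to C^{**}$, or weak$^*$ density of $C$ in $A$. Neither is established, and as you note yourself, neither is available in general.

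\textbf{The missing idea: do not return to $C$ at all.} Condition (4) is formulated in $\ell^\infty(\Gamma)/I$ with only \emph{approximate} right invariance precisely so that elements of $A$ can be approximated there. The paper proves (3)$\Rightarrow$(4) by contradiction, using Hahn--Banach to convert weak approximation into norm control.
\begin{enumerate}
\item Suppose (4) fails for $(E,\delta)$. Consider the convex set of majorants of the error tuples $\bigl(\sum_g|s\cdot a_g-a_{sg}|,\ \sum_g|\rho_t(a_g)-a_g|\bigr)_{s,t\in E}$, over all finite partitions of unity. Hahn--Banach separates it from the $\delta$-ball by positive functionals $\varphi_s,\psi_t$ of total mass $1$. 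So the weighted error of every finite partition of unity is at least $\delta$.
\item On the other hand, Zimmer amenability of $A$ plus exactness makes $\Gamma\curvearrowright\text{Sp}(A)$ topologically amenable. This gives a partition of unity $(\alpha_g)_{g\in K}\subset A_+$ with $\|\sum_g|s\cdot\alpha_g-\alpha_{sg}|\|<\delta/4$ and $\rho_t(\alpha_g)=\alpha_g$.
\item By Kaplansky density, approximate $(\alpha_g)$ by a partition of unity $(a_g)_{g\in K}$ in $\ell^\infty(\Gamma)/I$. The approximation is taken with respect to the finitely many functionals $\varphi_s,\psi_t$ and their left and right translates. The resulting $(a_g)$ violates the separation inequality.
\item Then (4)$\Rightarrow$(2) is elementary: lift the $a_g$ to positive $f_g$ with $\sum_g f_g=1$ and set $\mu(x)(g)=f_g(x)$.
\item Finally, (2)$\Rightarrow$(1) is Brown--Ozawa.
\end{enumerate}
Your item 3 gestures at such a convexity argument, but you aim it at landing in $C$. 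There Kaplansky density is unavailable without the very identification of $C^{**}$ inside $A$ that you could not make.
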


\subsection*{Acknowledgements} This work was done when the second author visited the first author at the University of Maryland, College Park in March 2026. We thank UMD for the stimulating environment. We warmly thank J. Peterson, D. Jekel, K. Toyosawa, and D. Gao for helpful feedback.   

\subsection*{AI statement} No AI tools were used by the authors in any stage of the research process in this article.

\section{Preliminaries}\label{prelims}

We begin with a brief introduction to small-at-infinity boundary and biexactness of von Neumann algebras. We refer the reader to \cite{DKEP23},\cite{ding2023biexact} for more details.

\subsection{Biexact and properly proximal von Neumann algebras}
\subsubsection{Strong and weak $M$-topology on operator bimodules}
Let $\HH$ be a complex Hilbert space, and $M,N$ two von Neumann subalgebras of $\B(\HH)$. We say an operator system $X\subset \B(\HH)$ is a $M$-$N$ operator bimodule if $ MXN\subset X $.
Given a normal state $\omega\in M_*$ and a normal state $\rho\in M'_*$, we can define a seminorm on $X$ as in \cite{Magajna00}.
\[ s_{\omega,\rho}(T):= \inf\{\omega(a^{*}a)^{\frac{1}{2}}\|S\|\rho(b^{*}b)^{\frac{1}{2}} : x= a^{*}Sb, a\in M,b\in N, y\in X\} \]
\begin{defn}
    Given a $ M $-$N$ operator bimodule $X$,  the \textbf{$M$-$N$ topology} on $X$ is the locally convex topology given by the family of seminorms $\{s_{\omega,\rho}\}$ with normal state $\omega\in M_*$ and normal state $\rho\in N_*$.
\end{defn}

We are specifically interested in the following case: If $M$ is a von Neumann subalgebra of $\B(\HH)$, and $M'$ be the commutant of $M$ inside $\B(\HH)$. Then $\B(\HH)$ can be considered as a $M$-$M$-bimodule as well as a $M'$-$M'$-bimodule (also as $M$-$M'$ and $M'$-$M$ bimodule).

We want to consider the finest locally convex topology on $\B(\HH)$ contained in all those four bimodule topologies. This topology can again be given via seminorms: Given a normal state $\omega\in M_*$ and a normal state $\rho\in M'_*$, we define the seminorm on $ \B(\H)$:
\begin{multline*} r_{\omega,\rho}(T):=  \inf\{ (\rho(a^*a)+\omega(b^*b))^{1/2}\|Z\|(\rho(c^*c)+\omega(d^*d))^{1/2} \\ \;\big|\; T= \begin{pmatrix}
    a\\
    b
\end{pmatrix}^* Z  \begin{pmatrix}
    c\\
    d
\end{pmatrix}, a,c\in M',b,d\in M,Z\in \mathbb{M}_2(\BB(\HH)\}, 
\end{multline*}
\begin{defn}
    The \textbf{$M$-$M$ and $M'$-$M'$ topology} on $\B(\HH)$ is the locally convex topology given by the family of seminorms $\{r_{\omega,\rho}\}$ with normal state $\omega\in M_*$ and normal state $\rho\in M'_*$.
\end{defn}

The dual space of the $M$-$N$ topology on an operator bimodule $X$ is given as follows.
\begin{thm}[{\cite[Theorem 3.7]{Magajna00}} ]\label{Thm: weak topology}
    A linear functional $ f\in X^*$ is continuous with respect to the $M$-$N$ topology if and only if for each $ T\in X$, the mapping $ M\times N\ni (a,b)\mapsto f(aTb) $  is normal on each variables.
\end{thm}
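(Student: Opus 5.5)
The forward implication follows directly from the seminorm estimate. For the converse, the plan is to represent $f$ through a Christensen--Effros--Sinclair type factorization and then use separate normality to make the representations of $M$ and $N$ normal.

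\emph{Continuity implies separate normality.} Suppose $f$ is continuous for the $M$-$N$ topology. Then there are normal states $\omega\in M_*$, $\rho\in N_*$ and a constant $C>0$ with $|f(x)|\le C\,s_{\omega,\rho}(x)$ for all $x\in X$. (Finitely many seminorms can be dominated by one, after averaging the states.) Fix $T\in X$ and $b\in N$. For $a\in M$, write $aTb=(a^*)^*Tb$; this is an admissible factorization, so
\[ |f(aTb)|\le C\,\omega(aa^*)^{1/2}\|T\|\rho(b^*b)^{1/2}. \]
Hence $a\mapsto f(aTb)$ is continuous on the unit ball of $M$ for the strong$^*$ topology. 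A linear functional that is strong$^*$ continuous on the unit ball is normal. The argument in $b$ is symmetric, using $\rho(b^*b)$.

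\emph{Separate normality implies continuity: factorization.} This is the substantial direction. We may assume $M$ and $N$ act on $\HH$ and $MXN\subset X$. Consider the trilinear map
\[ \Phi: M\times X\times N\to\C,\qquad \Phi(a,T,b)=f(aTb). \]
Since $f$ is a bounded functional on an operator space, it is completely bounded, and $\Phi$ is completely bounded in the multilinear (Haagerup tensor) sense. By the Christensen--Effros--Sinclair/Paulsen--Smith representation theorem for completely bounded module maps, we obtain the following data:
\begin{itemize}
\item a Hilbert space $\K$;
\item unital $*$-representations $\theta$ of $M$ and $\psi$ of $N$ on $\K$;
\item a completely bounded map $\phi:X\to\B(\K)$ with $\phi(aTb)=\theta(a)\phi(T)\psi(b)$;
\item vectors $\xi,\eta\in\K$,
\end{itemize}
such that $f(T)=\langle\phi(T)\xi,\eta\rangle$. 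Equivalently, one can extend $f$ to a bounded functional on $\B(\HH)$ via Hahn--Banach and run the Stinespring-type construction there.

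\emph{Reduction to normal representations.} Decompose $\theta=\theta_n\oplus\theta_s$ into normal and singular parts via a projection $p\in\theta(M)'$, and similarly $\psi=\psi_n\oplus\psi_s$ via a projection $q$. Fix $T$. By hypothesis, for each fixed $b$ the functional
\[ a\mapsto\langle\theta(a)\phi(T)\psi(b)\xi,\eta\rangle \]
is normal. We aim to show that its singular part, which is $a\mapsto\langle\theta(a)p^\perp\phi(T)\psi(b)\xi,p^\perp\eta\rangle$, must vanish. We then replace $\eta$ by $p\eta$, passing to the cyclic subspace generated by $\theta(M)\phi(X)\psi(N)\xi$. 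Doing the same on the right replaces $\xi$ by $q\xi$. After these replacements, $f(T)=\langle\phi(T)\xi,\eta\rangle$ with $\theta$ and $\psi$ normal.

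\emph{Conclusion.} Define $\omega'(a)=\langle\theta(a)\eta,\eta\rangle$ and $\rho'(b)=\langle\psi(b)\xi,\xi\rangle$; these are normal positive functionals, which we rescale to normal states $\omega,\rho$. For any factorization $x=a^*Sb$ we get
\[ |f(x)|=|\langle\phi(S)\psi(b)\xi,\theta(a)\eta\rangle|\le\|\phi\|_{cb}\,\omega'(a^*a)^{1/2}\|S\|\rho'(b^*b)^{1/2}. \]
Taking the infimum over factorizations gives $|f|\le C\,s_{\omega,\rho}$, so $f$ is continuous.

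\emph{Main obstacle.} The step I expect to be delicate is the normality reduction. Separate normality only says that the coefficient functionals $a\mapsto\langle\theta(a)\phi(T)\psi(b)\xi,\eta\rangle$ are normal. It does not immediately say that $\theta$ restricted to the relevant cyclic subspace is normal. I would first try the following argument. The closed span of the vectors $\phi(T)\psi(b)\xi$ generates, under $\theta(M)$, a subspace on which every vector functional paired against $\eta$ is normal. One then uses the standard fact that the set of vectors $\zeta$ for which $\langle\theta(\cdot)\zeta,\eta'\rangle$ is normal for all $\eta'$ in a $\theta(M)$-invariant set forms a closed invariant subspace carrying the normal part. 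If this cannot be made to work directly, the fallback is to replace $\theta$ by the universal normal representation and invoke Magajna's duality between normal bimodule maps and the normal Haagerup tensor product $M\otimes_{w^*h}X\otimes_{w^*h}N$.
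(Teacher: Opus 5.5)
Your proof is correct. The paper gives no argument for this statement; it simply imports it from \cite{Magajna00}, so there is no internal proof to compare with. In the concrete setting used here ($X\subseteq\mathbb{B}(\HH)$, with $M,N\subseteq\mathbb{B}(\HH)$ acting by multiplication), your argument is a complete, self-contained proof. The forward direction is fine as written.

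Two places can be tightened. First, the step you flagged as the main obstacle is already settled by what you wrote. For fixed $T\in X$, the functional $a\mapsto f(aT)$ equals $\langle\theta(a)p\,\phi(T)\xi,p\eta\rangle+\langle\theta(a)p^{\perp}\phi(T)\xi,p^{\perp}\eta\rangle$, a normal plus a singular functional. Since $a\mapsto f(aT)$ is normal, uniqueness of the decomposition $M^{*}=M_{*}\oplus M_{*}^{\perp}$ forces the second summand to vanish. Evaluating at $a=1$ then gives $f(T)=\langle\phi(T)\xi,p\eta\rangle$ for all $T\in X$. Running the same argument on $b\mapsto f(Tb)=\langle\psi(b)\xi,\phi(T)^{*}p\eta\rangle$ gives $f(T)=\langle\phi(T)q\xi,p\eta\rangle$. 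Your final estimate then applies with the normal functionals $\langle\theta(\cdot)p\eta,p\eta\rangle$ and $\langle\psi(\cdot)q\xi,q\xi\rangle$.

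You never need $\theta$ to be normal on the cyclic subspace generated by $\theta(M)\phi(X)\psi(N)\xi$. It need not be, for example when $p^{\perp}\eta=0$ but $p^{\perp}\phi(X)\xi\neq0$. The fallback through normal Haagerup tensor products is therefore unnecessary. Note also that only normality of $a\mapsto f(aT)$ and $b\mapsto f(Tb)$ is used, which is equivalent to the hypothesis because $MXN\subseteq X$.

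Second, the multilinear Christensen--Effros--Sinclair/Paulsen--Smith theorem is more than you need. Extend $f$ by Hahn--Banach to $\tilde f\in\mathbb{B}(\HH)^{*}$. Write $\tilde f=\langle\pi(\cdot)\xi,\eta\rangle$ for a $*$-representation $\pi$ of $\mathbb{B}(\HH)$; by polar decomposition you can arrange $\|\xi\|\|\eta\|=\|f\|$. Then set $\theta=\pi|_{M}$, $\psi=\pi|_{N}$, $\phi=\pi|_{X}$. The module identity is automatic and $\|\phi\|_{cb}\le1$. This lets your final inequality also cover matrix factorizations $x=a^{*}Sb$ with $a\in\mathbb{M}_{n,1}(M)$, $S\in\mathbb{M}_{n}(X)$, $b\in\mathbb{M}_{n,1}(N)$, should the seminorm be defined with those. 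It also yields the explicit bound $|f|\le\|f\|\,s_{\omega,\rho}$ for suitable normal states $\omega,\rho$.

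Compared with quoting Magajna, this route uses nothing beyond Hahn--Banach, a vector representation of a bounded functional, and the normal/singular splitting of a representation of $M$ (resp.\ $N$).
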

Therefore, we denote $X^{M\sharp N}$ to be the subspace of $X^*$ consisting of linear functionals continuous with respect to the $M$-$N$ topology.

\begin{defn}
    The \textbf{weak $M$-$N$ topology} on a $M$-$N$ operator bimodule $X$ is the locally convex topology given by the linear functionals in $ X^{M\sharp N}$.
\end{defn}

\begin{remark}
    When $M=N$ and $X$ is a $C^*$-algebra containing $M$, then $ X^{M\sharp M} $ is closed under Jordan decomposition \cite[Lemma 5.3]{DKEP23}. Also, by the Cauchy-Schwarz inequality, a state $\varphi$ belongs to $X^{M\sharp M}$ if and only if $\varphi$ is normal on $M$. In particular $ X^{M\sharp M} $ is the span of states that are normal on $M$.
\end{remark}

Similarly, a linear functional $f$ on $\B(\H)$ is continuous with respect to the $M$-$M$ and $M'$-$M'$ topology if and only if $ f\in \B(\H)^{M\sharp M}\cap \B(\H)^{M'\sharp M'} $. When $\H = L^2M$, we denote in short $ \B(L^2M)^{\sharp}:=\B(L^2M)^{M\sharp M} $ and $ \B(L^2M)^{\sharp}_J:=\B(L^2M)^{M\sharp M}\cap\B(\H)^{JMJ\sharp JMJ} $ following \cite{DKEP23}. In particular, $ \B(L^2M)^{\sharp}_J $ is the span of states that are normal on both $M$ and $M'$.

\subsubsection{Boundary pieces of von Neumann algebras}
For a von Neumann algebra $M\subset \B(L^2M)$, a $M$-boundary piece is a hereditary $C^*$-subalgebra $\XX \subset \B(L^2M)$ such that the multiplier $ \mathcal{M}(\XX) $ contains an ultraweak dense subalgebra of $M$ as well as of $JMJ$. We are going to focus on the subalgebra boundary piece: Let $ \mathcal{N} = \{N_i\}_{i\in I} $ be a family of von Neumann subalgebras of $ M$ with expectation and with the Jones projections $e_{N_i}$'s. We consider the smallest $M$-boundary piece containing $ \{e_{N_{i}}\}_{i\in I} $:
$$ \XX_{ \mathcal{N} } := \overline{ MJMJ \left( C^*( e_{N_i}\B(L^2M) e_{N_j}:i,j\in I  )\right) JMJM }. $$
We also denote $ \XX_{ N }= \XX_{\{N\}} = \overline{ MJMJ e_N\B(L^2N)e_N MJMJ } $. And when $ N=\C $, we precisely obtain the compact operators $ \XX_{\C}=\K(L^2M) $. Note that for a subalgebra boundary piece $ \XX_{\mathcal{N}} $, its multiplier always contains the whole $M$ and $JMJ$. For a general boundary piece $\XX$, one can always replace it by a slightly larger boundary piece $ \KKK_\XX(M) $ which contains $M$ and $JMJ$ in its multiplier without changing the $ M$-$M$ and $JMJ$-$JMJ$ closure (see \cite[Section 3]{DKEP23}). In this paper, for simplicity, we always assume that a boundary piece $\XX$ contains $M$ and $JMJ$ in its multiplier. 

For each $M$-boundary piece $\XX$, we denote $ \KKK^{\infty,1}_\XX( M ) $ its closure under the $M$-$M$ and $JMJ$-$JMJ$ topology (when $ M $ is tracial, $\KKK^{\infty,1}_\XX( M )$ is precisely the closure of $\XX$ under the $L^\infty$-$L^1$ norm by \cite[Prop. 3.1]{DKEP23}). 

\subsubsection{Small-at-infinity boundary}
Given a $M$-boundary piece $\XX$, its small-at-infinity boundary is defined in \cite{DKEP23} as the operator system
$$ \S_{\XX}(M):=\{T\in \B(L^2M): [T,JxJ]\in \KKK^{\infty,1}_{\XX}(M),\forall x\in M\}. $$
And when $ \XX=\K(L^2M)$, one simply denotes $ \S(M):=\S_{\K(L^2M)}(M) $.

\begin{defn}
    A von Neumann algebra $M$ is said to be \textbf{biexact relative} to a boundary piece $\XX$ if the embedding
    $M\subset \S_\XX(M)$ is $M$-nuclear, i.e. there exists a net of contractive completely positive (c.c.p.) maps $\phi_n: M\to \S_\XX(M)$ factoring through matrix algebras such that $\phi_n$ pointwisely converges to the identity map on $M$ with respect to the $M$-$M$-topology.
\end{defn}

This definition of relative biexactness coincides with the relative biexactness of groups. See Section \ref{relatively biexact section end} for details about the biexactness for groups.
\begin{thm}[{\cite[Theorem 6.2]{ding2023biexact}}]\label{thm: DP 6.2}
    Let $\Gamma$ be a discrete group with a family of subgroups $ \{ H_i \}_{i\in I} $, then $ \Gamma $ is biexact relative to $ \{H_i\}_{i\in I} $ if and only if the group von Neumann algebra $ L\Gamma $ is biexact relative to $ \{ LH_i\}_{i\in I} $. 
\end{thm}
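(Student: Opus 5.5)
We reduce both directions to a single identification: the small-at-infinity boundary of $L\Gamma$ relative to $\{LH_i\}$ should match, under the natural dictionary, the uniform Roe algebra picture of $\Gamma$ relative to the boundary piece generated by the $H_i$. Recall that $\Gamma$ is biexact relative to $\{H_i\}$ when the left-right action on $\ell^\infty(\Gamma)/c_0(\Gamma;\{H_i\})$ is amenable. Here $c_0(\Gamma;\{H_i\})$ is the ideal generated by functions supported on finite unions of sets $sH_it$. Equivalently, $\Gamma$ is biexact relative to $\{H_i\}$ when the inclusion $C^*_\lambda(\Gamma)\subset \ell^\infty(\Gamma)\rtimes_r \Gamma$ modulo that ideal is nuclear in the appropriate sense.

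\textbf{Step 1: the boundary piece.} We first show that the boundary piece $\XX_{\{LH_i\}}$ has the expected form. Note that $e_{LH_i}$ is the projection onto $\ell^2(H_i)$. Conjugating by $\lambda_s\rho_t$, which lie in $L\Gamma$ and $JL\Gamma J$, produces the projections onto $\ell^2(sH_it)$. Hence $\XX_{\{LH_i\}}\cap \ell^\infty(\Gamma)=c_0(\Gamma;\{H_i\})$. Moreover $\XX$ is the hereditary algebra generated by this ideal together with the corners $e_{N_i}\B e_{N_j}$.

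\textbf{Step 2: relating $\S_\XX(L\Gamma)$ to the boundary algebra.} We use the conditional expectation $E:\B(\ell^2\Gamma)\to\ell^\infty(\Gamma)$ onto the diagonal, following \cite{DKEP23}. The plan is to show two things:
\begin{itemize}
\item the map $f\mapsto$ multiplication by $f$ sends the set of $f$ with $f-\rho_t f\in c_0(\Gamma;\{H_i\})$ into $\S_\XX(L\Gamma)$;
\item conversely, $E$ maps $\S_\XX(L\Gamma)$ into the functions that are small relative to $\{H_i\}$ under right translation, modulo $\KKK^{\infty,1}_\XX$.
\end{itemize}
The key point is that $E$ is continuous for the $L\Gamma$-$L\Gamma$ and $JL\Gamma J$-$JL\Gamma J$ topologies. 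It should also map $\KKK^{\infty,1}_\XX$ into the $\|\cdot\|_\infty$-closure of $c_0(\Gamma;\{H_i\})$ after averaging. We expect this to follow from the $L^\infty$–$L^1$ norm description in the tracial case.

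\textbf{Step 3: the forward direction.} Assume $\Gamma$ is biexact relative to $\{H_i\}$. Take the maps $\mu$ of condition (2) of the group characterization and build Herz–Schur type ccp maps $\phi(\lambda_s)=\sum_x \langle\mu(x),s\mu(s^{-1}x)\rangle\,\lambda_s 1_x$. These factor approximately through matrix algebras and land in $\S_\XX$ by Step 2. They converge $L\Gamma$-$L\Gamma$ topologically, because they converge on the dense span of $\lambda_s$ and are uniformly bounded.

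\textbf{Step 4: the converse.} Assume $L\Gamma\subset\S_\XX(L\Gamma)$ is $L\Gamma$-nuclear. Compose with $E$ (twisted by $\lambda_s^*$) to extract positive-definite-type functions. A Hahn–Banach/convexity argument upgrades weak $L\Gamma$-topology approximation to the norm-type smallness condition in (2).

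\textbf{Main obstacle.} The hardest part is the converse in Step 2: controlling $E$ on the closure $\KKK^{\infty,1}_\XX$. This is where we must show that the $L^\infty$–$L^1$ limits of elements of $\XX$ have diagonals small relative to the $H_i$. We would attempt it by cutting with the finite-rank projections $1_F$, $F\subset\Gamma\setminus\bigcup sH_it$, and using the $\|\cdot\|_{\infty,1}$ estimate $\|1_F T1_F\|$.
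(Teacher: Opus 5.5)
The paper does not prove this statement; it is quoted from \cite[Theorem 6.2]{ding2023biexact}, so your sketch has to stand on its own. Your framework is the right one: the diagonal expectation $E:\B(\ell^2\Gamma)\to\ell^\infty(\Gamma)$, the inclusion of $\S_I(\Gamma)$ into $\S_\XX(L\Gamma)$, and $E(\S_\XX(L\Gamma))\subset\S_I(\Gamma)$, where $I=c_0(\Gamma;\{H_i\})$. But Step 3 fails as written.

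\textbf{Step 3.} Condition (2) only gives, for each $(E,\varepsilon)$, a $\mu$ whose defect is $<\varepsilon$ off a set that is small relative to $I$. Your map has the form $\phi(\lambda_s)=M_{f_s}\lambda_s$, so $[\phi(\lambda_s),\rho_t]=M_{f_s-\rho_t(f_s)}\rho_t\lambda_s$. Since $E$ maps $\KKK^{\infty,1}_\XX$ into $I$ (see below), $\phi(\lambda_s)\in\S_\XX(L\Gamma)$ forces $f_s-\rho_t(f_s)\in I$. Condition (2) only gives $|f_s-\rho_t(f_s)|\lesssim\sqrt{\varepsilon}$ off an $I$-small set. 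So the maps need not land in $\S_\XX(L\Gamma)$, and your Step 2 criterion does not apply.

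Nothing guarantees convergence either. Finite sets are $I$-small, so (2) does not force $f_s(y)\to 1$ at any fixed $y$. Yet $T\mapsto\langle T\delta_{s^{-1}y},\delta_y\rangle$ is continuous for the $L\Gamma$-$L\Gamma$ topology.

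You must use the hypothesis itself. Amenability of the action on $\S_I(\Gamma)$ gives $\mu_n$ with coordinates in $\S_I(\Gamma)$ and uniformly small left defect. Hence $f^{(n)}_s\in\S_I(\Gamma)$ and $\|f^{(n)}_s-1\|_\infty\to0$. More directly, $\S_I(\Gamma)\rtimes_r\Gamma$ is nuclear and sits inside $\S_\XX(L\Gamma)$. This is also what supplies the factorization through matrix algebras; the Herz--Schur formula does not.

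Even then, convergence on the $\lambda_s$ plus uniform boundedness only gives convergence on the norm closure $C^*_\lambda\Gamma$. $\C\Gamma$ is merely ultraweakly dense in $L\Gamma$ and the maps are not bimodular. So you need the nontrivial upgrade \cite[Corollary 4.9]{ding2023biexact}, exactly as the paper uses it for graph products.

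\textbf{Step 4.} The function $h_n(s)=E(\phi_n(\lambda_s)\lambda_s^*)$ is indeed positive definite in the $\Gamma$-$C^*$-algebra sense, since $[\phi_n(\lambda_{s_i}^*\lambda_{s_j})]\geq0$. It takes values in $\S_I(\Gamma)$ and tends to $1$. But positive definite functions tending to $1$ carry no information on their own (take $h\equiv1$), and no convexity argument changes that.

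The missing idea is a support condition coming from the factorization $\phi_n=\beta_n\circ\alpha_n$ through matrix algebras. Extend $\alpha_n|_{C^*_\lambda\Gamma}$ to $\B(\ell^2\Gamma)$ by Arveson. Approximate it in point-norm by compressions $x\mapsto V^*(x\otimes 1)V$ with $V$ of finitely supported range. The resulting $h_n'$ are finitely supported, and the positive-type characterization of amenable actions \cite[Chapter 4]{brown2008textrm} then gives amenability of the action on $\S_I(\Gamma)$.

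\textbf{The step you call the main obstacle.} This is actually the easy part. Cutting with finite-rank projections is the wrong tool, because it cannot see behaviour at infinity relative to $\{H_i\}$. Instead, note that $\|a\delta_x\|=\|a\delta_e\|$ for $a\in L\Gamma\cup JL\Gamma J$. This gives $|E(a^*Sb)(x)|\leq\|S\|\,\|a\delta_e\|\,\|b\delta_e\|$. So $E$ is continuous from the $L\Gamma$-$L\Gamma$ and $JL\Gamma J$-$JL\Gamma J$ topology into the norm topology of $\ell^\infty(\Gamma)$. Hence $E(\KKK^{\infty,1}_\XX)\subset\overline{E(\XX)}$.

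The inclusion $E(\XX)\subset I$ is then checked on generators, after $\|\cdot\|_2$-approximating elements of $L\Gamma$ and $JL\Gamma J$ by group-algebra elements. Your Step 1 only shows $c_0(\Gamma;\{H_i\})\subset\XX$. The same estimate gives $\|h_n(s)-1\|_\infty\to0$ directly, without Hahn--Banach.
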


\subsection{Boundary piece and biexactness through the bidual}

Fix a von Neumann algebra $M\subset \B(L^2M)$, we consider the bidual von Neumann algebra $\B(L^2M)^{**}$ equipped with the canonical embedding $\iota: \B(L^2M)\to \B(L^2M)^{**} $. For any von Neumann subalgebra $P\subset \B(L^2M)$, we denote by $p_{\text{nor}}^P\in P^{**}\subset \B(L^2M)^{**}$ the support central projection of the canonical $*$-homomorphism $P^{**}\to P$ (i.e. $p_{\text{nor}}^P$ is the canonical projection such that $ P^{**}p_{\text{nor}}^P \simeq P $). We denote in short $p_{\nor} := p_{\nor}^M \subset \B(L^2M)^{**}$ and $p_{\nor}^\sharp = p_{\nor}^Mp^{JMJ}_{\nor}\in \B(L^2M)^{**}$.

If $\B(L^2M)^{**}$ has a normal faithful representation on a Hilbert space $\K$, then $ p_{\text{nor}}^P\K $ is the space of vectors $\xi$ such that the vector state $ \varphi_\xi:=\langle \xi,\cdot \xi \rangle$ is normal on $P$. Therefore, we could identify $ \B(L^2M)^{\sharp }_J $ as the predual of the von Neumann algebra $ p_{\nor}^\sharp \B(L^2M)^{**}p_{\nor}^\sharp $. $\B(L^2M)$ also canonically embeds into $ p_{\nor}^\sharp \B(L^2M)^{**}p_{\nor}^\sharp= \B(L^2M)_J^{\sharp *} $ via the map
$$ \iota^{\sharp}:= \text{Ad}_{p^\sharp_{\nor}}\circ \iota : \B(L^2M) \to \B(L^2M)^{\sharp*}_J.$$
Note that $\iota^\sharp$ is only a completely order isomorphism but not a homomophism in general, as $p^\sharp_{\nor}$ does not commute with all of $\B(L^2M)^{**}$. Also, as both $p^{M}_{\nor}$ and $ p^{JMJ}_{\nor} $ commute with $ M^{**} \vee (JMJ)^{**}$, both $M$ and $JMJ$ are in the multiplicative domain of $\iota^\sharp$. 

Similarly, for a $M$-boundary piece $\XX\subset \B(L^2M)$, we can also identify $ \XX^{\sharp *}_J = p^\sharp_{\nor}\XX^{**}p^\sharp_{\nor}$, where $ \XX^{\sharp }_J := \XX^{M\sharp M}\cap \XX^{JMJ\sharp JMJ}\subset \XX^*$. Since $\XX$ is hereditary, for any approximate identity $e_{j}$, $ \lim_j \iota(e_j)$ is the identity of the von Neumann algebra $ \XX^{**} $. We will denote this identity by $ \bar{q}_{\XX} $. Note that by the Kaplansky density theorem, $ \bar{q}_{\XX} \B(L^2M)^{**}\bar{q}_{\XX} = \XX^{**} $ (i.e., the double dual of a hereditary subalgebra is always a corner subalgebra).

Since $M$ and $JMJ$ are inside the multiplier of $\XX$, both $\iota(M)$ and $\iota(JMJ)$ commute with $\bar{q}_{\XX}$, and in particular, $ p^\sharp_{\nor}\in M^{**}\vee (JMJ)^{**} $ commutes with $ \bar{q}_{\XX} $. Denote $ q_{\XX}:= p^\sharp_{\nor} \bar{q}_{\XX} $, then $ q_{\XX} $ is the identity of the von Neumann algebra $ \XX^{\sharp*}_J = q_{\XX} \B(L^2M)^{\sharp *}_J q_{\XX}$.

Recall that $\KKK^{\infty,1}_{\XX}(M)$ is the $M$-$M$ and $JMJ$-$JMJ$ closure of $\XX$, by Theorem \ref{Thm: weak topology}, it is also the weak $M$-$M$ and $JMJ$-$JMJ$ closure of $\XX$. In particular, we have $$ \KKK^{\infty,1}_{\XX}(M)= (\iota^{\sharp})^{-1}(\XX^{\sharp *}_J). $$

Following \cite[Lemma 8.5]{DKEP23}, we define the bidual version of the small-at-infinity boundary $\S_{\XX}(M)$ as:
$$\tilde{\S}_{\XX}(M) := \{T\in \B(L^2M)^{\sharp*}_J: [T,\iota^{\sharp}(JxJ)]\in \XX^{\sharp *}_J,\forall x\in M\} \subset \B(L^2M)^{\sharp*}_J.$$
\begin{remark}
    When $\XX=\K(L^2M)$, $\tilde{\S}(M):=\tilde{\S}_{\K(L^2M)}(M)$ is in fact a von Neumann algebra. Indeed, since $\K(L^2M)$ is an ideal, the identity $q_{\KKK}$ of $ \K(L^2M)^{\sharp *}_J $ is a central projection in $ \B(L^2M)^{\sharp*}_J $. In particular, we have
    $$ \tilde{\S}(M) = \iota(JMJ)'q_{\KKK}^{\perp}\oplus \B(L^2M)^{\sharp*}_Jq_{\KKK} = \iota(JMJ)'q_{\KKK}^{\perp}\oplus \K(L^2M)^{\sharp *}_J. $$
\end{remark}

\begin{remark}
    For a family $ \mathcal{N} = \{N_i\}_{i\in I} $ of subalgebras with expectation, we have (\cite[Lemma 5.3]{toyosawa2025relative})
    $$ q_{ \XX_\mathcal{N} } = \vee_{i\in I}\vee_{ u,v\in \mathcal{U}(M) }\iota^{\sharp}( uJvJ e_{N_i}Jv^*Ju^* ) = \vee_{i\in I}q_{\XX_{N_i}}. $$
\end{remark}

We can also characterize biexactness of a separable $M$ using $ \tilde{\S}_{\XX}(M) $.
\begin{thm}[{\cite[Theorem 5.21]{toyosawa2025relative}}]\label{thm: bidual characterization of biexact}
    A separable von Neumann algebra $M$ is biexact relative to $\XX$ if and only if the embedding $\iota^{\sharp}|_{M}: M\to \tilde{\S}_{\XX}(M)$ is weak${}^*$-nuclear.
\end{thm}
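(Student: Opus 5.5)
The statement is an equivalence. I would prove the two directions separately, always passing between $\S_\XX(M)$ and its bidual model $\tilde{\S}_\XX(M)$ through the map $\iota^\sharp$.

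\emph{Biexact implies weak$^*$-nuclear.} I would first check that $\iota^\sharp(\S_\XX(M))\subset\tilde{\S}_\XX(M)$. Since $JMJ$ lies in the multiplicative domain of $\iota^\sharp$, we have $[\iota^\sharp(T),\iota^\sharp(JxJ)]=\iota^\sharp([T,JxJ])$. By the identity $\KKK^{\infty,1}_\XX(M)=(\iota^\sharp)^{-1}(\XX^{\sharp*}_J)$, this commutator lies in $\XX^{\sharp*}_J$ whenever $[T,JxJ]\in\KKK^{\infty,1}_\XX(M)$.

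Now let $\phi_n=\beta_n\circ\alpha_n$ be the c.c.p. maps witnessing $M$-nuclearity of $M\subset\S_\XX(M)$. The maps $\iota^\sharp\circ\phi_n$ are c.c.p., factor through the same matrix algebras, and take values in $\tilde{\S}_\XX(M)$. Every functional in the predual $\B(L^2M)^\sharp_J$ lies in $\B(L^2M)^{M\sharp M}$, so it is continuous for the $M$-$M$ topology. Hence $\phi_n(x)\to x$ in that topology gives $\iota^\sharp\phi_n(x)\to\iota^\sharp(x)$ weak$^*$, which is weak$^*$-nuclearity.

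\emph{Weak$^*$-nuclear implies biexact.} Suppose $\iota^\sharp|_M\approx\beta_n\circ\alpha_n$ point-weak$^*$, with $\alpha_n:M\to\M_{k(n)}$ and $\beta_n:\M_{k(n)}\to\tilde{\S}_\XX(M)$ c.c.p. The task is to replace each $\beta_n$ by a c.c.p. map $\M_{k(n)}\to\S_\XX(M)$ without losing the weak$^*$ approximation. The key lemma I would aim for is a matrix-level Kaplansky-type density statement. It asserts that every c.c.p. map $\beta:\M_k\to\tilde{\S}_\XX(M)$ is a point-weak$^*$ limit of maps $\iota^\sharp\circ\gamma$, with $\gamma:\M_k\to\S_\XX(M)$ c.c.p.

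To prove the lemma I would proceed in three steps.
\begin{enumerate}
\item Identify a c.c.p. map $\M_k\to\B(L^2M)^{\sharp*}_J$ with a positive contraction in $\M_k(\B(L^2M)^{\sharp*}_J)$.
\item Approximate that element weak$^*$ by positive contractions in $\M_k(\iota^\sharp(\B(L^2M)))$, using Kaplansky density for $\iota(\B(L^2M))\subset\B(L^2M)^{**}$ and then compressing by $p^\sharp_\nor$.
\item Correct the approximants so that their commutators with $JMJ$ fall into $\KKK^{\infty,1}_\XX(M)$.
\end{enumerate}
For the correction, I would split $T\in\tilde{\S}_\XX(M)$ as $Tq_\XX+Tq_\XX^\perp$. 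On the $q_\XX^\perp$ part $T$ commutes with $\iota^\sharp(JMJ)$, so I would use an $\mathrm{Ad}(JuJ)$-averaging (a hypertrace/Dixmier-type argument) to land in $(JMJ)'=M$-commuting elements modulo $\XX$. For the interaction with $q_\XX$ I would use a quasi-central approximate unit $(e_j)$ of $\XX$ relative to a countable dense subset of $JMJ$; separability of $M$ is exactly what allows a countable family here. This is the step I expect to be the main obstacle, because commutator conditions are not preserved under weak$^*$ limits and the cut-downs must be made compatibly across matrix levels.

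\emph{Upgrading to $M$-$M$ convergence.} Once $\gamma_n$ is in hand, set $\phi_n=\gamma_n\circ\alpha_n:M\to\S_\XX(M)$. These factor through matrix algebras, and $\iota^\sharp\phi_n(x)\to\iota^\sharp(x)$ weak$^*$; since $\B(L^2M)^\sharp_J$ separates points through $\iota^\sharp$, this means $\phi_n(x)\to x$ in the weak topology induced by normal-on-$M$ and $M'$ functionals. To obtain convergence in the $M$-$M$ topology itself I would use convexity. The set of such factorizable c.c.p. maps is convex (using direct sums of matrix algebras), and by Theorem \ref{Thm: weak topology} the $M$-$M$ topology has dual $X^{M\sharp M}$. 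Working on finite subsets of $M$ via direct sums, Hahn--Banach then shows that weak closure equals $M$-$M$ closure, so convex combinations of the $\phi_n$ converge in the $M$-$M$ topology. One must check that the relevant dual is spanned by functionals normal on $M$ and on $JMJ$ after restricting to $\S_\XX(M)$; this is where the identification of $\tilde{\S}_\XX(M)$ in the bidual is used.
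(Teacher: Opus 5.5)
Your direction ``biexact $\Rightarrow$ weak$^*$-nuclear'' is fine. The converse has a genuine gap: the matrix-level density lemma it rests on is false. It fails already for $k=1$, $\XX=\K(L^2M)$ and $M=L\Gamma$ with $\Gamma$ any infinite countable group, even $\Gamma=\mathbb{Z}$, so amenability does not rescue it.

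Here is a counterexample. Let $E:\B(\ell^2\Gamma)\to\ell^\infty\Gamma$ be the diagonal expectation. Pick $x\in\beta\Gamma\setminus\Gamma$ and a point $y$ in the closure of the right orbit $x\Gamma$ with $y\notin x\Gamma$. Put $\omega=(\delta_x-\delta_y)\circ E$.
\begin{enumerate}
\item Both $\delta_x\circ E$ and $\delta_y\circ E$ restrict to the trace on $L\Gamma$ and on $R\Gamma$, so $\omega\in\B(\ell^2\Gamma)^\sharp_J$.
\item Since $|E(T)(g)|=|\langle T\delta_g,\delta_g\rangle|\le\|T\|_{\infty,1}$, the map $E$ sends $\KKK^{\infty,1}(L\Gamma)$ into $c_0(\Gamma)$. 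Hence for $S\in\S(L\Gamma)$ we get $E(S)-\mathrm{Ad}(\rho_t)E(S)=E([S,\rho_t]\rho_t^*)\in c_0(\Gamma)$.
\item So $E(S)$ is continuous and right-invariant on $\beta\Gamma\setminus\Gamma$, hence constant on the closure of $x\Gamma$, and $\omega$ vanishes on $\S(L\Gamma)$.
\item The projection $F=\bigvee_{g\in\Gamma}\delta_{xg}\in(\ell^\infty\Gamma)^{**}\subset\B(\ell^2\Gamma)^{**}$ commutes with every $\iota(\rho_t)$. Therefore $A=p^\sharp_{\nor}Fp^\sharp_{\nor}$ is a positive contraction in $\iota^\sharp(R\Gamma)'\subset\tilde{\S}(L\Gamma)$.
\item The normal extension of $\omega$ takes the value $1-0=1$ on $A$.
\end{enumerate}
Consequently $A$ is not a weak$^*$-limit of elements of $\iota^\sharp(\S(L\Gamma))$, so the c.c.p.\ map $\lambda\mapsto\lambda A$ cannot be approximated as you claim.

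The underlying phenomenon is that the $JMJ$-commuting part of the bidual is far larger than the weak$^*$ closure of the asymptotically commuting operators at the $C^*$-level. No correction of Kaplansky approximants can close that gap. Separately, your $\mathrm{Ad}(JuJ)$-averaging into $(JMJ)'=M$ would require injectivity of $M$.

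So you cannot replace each $\beta_n$ by an $\S_\XX(M)$-valued map. The paper quotes this theorem from Toyosawa without proof. Its own group-level analogue, Theorem~\ref{thm: Jesse}, shows the mechanism that works:
\begin{itemize}
\item In (3)$\Rightarrow$(4), elements of $[(\ell^\infty\Gamma/I)^{**}]^{\Gamma_r}$ are approximated by elements of $\ell^\infty\Gamma/I$ that are only \emph{approximately} right-invariant. A Hahn--Banach/convexity argument converts weak$^*$ closeness into norm estimates.
\item Then (4)$\Rightarrow$(2)$\Rightarrow$(1) uses a characterization of relative biexactness that needs only approximate equivariance.
\end{itemize}

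The von Neumann algebraic proof needs the same two ingredients:
\begin{itemize}
\item approximate $\beta_n\circ\alpha_n$, via convex combinations, by c.c.p.\ maps into $\B(L^2M)$, not into $\S_\XX(M)$, whose commutators with $JMJ$ are only approximately in $\KKK^{\infty,1}_\XX(M)$;
\item a separate argument assembling such asymptotically commuting maps into genuine $\S_\XX(M)$-valued maps that converge in the $M$-$M$ topology. This is where a separability hypothesis naturally enters.
\end{itemize}

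Your final upgrading step also tacitly assumes something you flag but do not prove: that functionals on $\S_\XX(M)$ which are continuous for the $M$-$M$ topology are controlled by $\B(L^2M)^\sharp_J$.
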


\subsubsection{Biduals of some other non-unital subalgebras}
Given a subalgebra $N\subset M$ with expectation. Besides the boundary piece $ \XX_N $, we can also consider other non-unital subalgebras inside $\B(L^2M)$:
 \[ \overline{Me_NM},\quad \overline{JMJe_NJMJ}, \]
 as well as their hereditary version
 \[\overline{Me_N \B(L^2M)e_NM},\quad \overline{JMJe_N \B(L^2M)e_NJMJ}.\]
 The bidual subalgebras $ (Me_NM)^{**} $ and $ (Me_N \B(L^2M)e_NM)^{**} $ share the same identity $ \bar{q}_{Me_NM} := \vee_{u\in \mathcal{U}(M)}\iota(ue_Nu^*)\in \B(L^2M)^{**} $. Since $\overline{Me_N \B(L^2M)e_NM}$ is hereditary, again by Kaplansky, we also have $$ \bar{q}_{Me_NM}\B(L^2M)^{**}\bar{q}_{Me_NM} = (Me_N \B(L^2M)e_NM)^{**}. $$ The same claims also hold for $ \overline{JMJe_NJMJ} $ and $ \overline{JMJe_N \B(L^2M)e_NJMJ} $.

 Note that for any subalgebra $N\subset M$ with expectation, $ \iota(e_{N}) $ always commutes with $ p^M_{\nor} $ and $p^{JMJ}_{\nor}$ (\cite[Lemma 3.8]{DKE24}). Therefore, we also have $ [ \bar{q}_{Me_NM},p^{\sharp}_{\nor} ]= [ \bar{q}_{JMJe_NJMJ},p^{\sharp}_{\nor} ] =0 $. We will denote $ q_{Me_BM}:=\bar{q}_{Me_NM} \;p^{\sharp}_{\nor} $ and $ {q}_{JMJe_NJMJ}:=\bar{q}_{JMJe_NJMJ}\;p^{\sharp}_{\nor} $. See \cite[Section 5.1]{toyosawa2025relative} for more details of those projections.

\subsection{Mixing subalgebras without a trace}
We follow \cite[Section 5.2]{toyosawa2025relative} and define mixing subalgebras of general von Neumann algebras as follows.
\begin{defn}
    Let $N\subset M$ be a subalgebra with expectation. We say $N$ is a \textbf{mixing subalgebra} if for every $x,y\in M\ominus N$, $ e_NxJyJe_N \in \KKK^{\infty,1}(N) $.
\end{defn}

We say that a subalgebra $N\subset M$ with expectation is \emph{coarse} if $L^2(M\ominus N)$ as an $N$--$N$ bimodule is contained in a direct sum of coarse bimodules. 

\begin{lemma}\label{lem: coarse implies mixing}
    If $N\subset M$ is coarse, then $N$ is mixing.
\end{lemma}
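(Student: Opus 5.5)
We need to show that for $x,y\in M\ominus N$ the operator $e_N xJyJ e_N$, viewed as an operator on $L^2N$, lies in $\KKK^{\infty,1}(N)$, the $N$-$N$ and $JNJ$-$JNJ$ closure of $\K(L^2N)$. The plan is to realize $e_N xJyJe_N$ as a matrix coefficient of the $N$-$N$ bimodule $L^2(M\ominus N)$ and then use coarseness to reduce to the case of $L^2N\bar\otimes L^2N$, where the corresponding operators are visibly limits of compacts.

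First, identify $e_NxJyJe_N$. For $\xi\in L^2N$, we have $xJyJ\xi = x\xi y^*$ (up to the modular twist in the non-tracial setting), and $e_N$ projects onto $L^2N$. So $e_NxJyJe_N$ is the operator $T_{x,y}: L^2N\to L^2N$ obtained by composing $V_x:L^2N\to L^2(M\ominus N)$, $\xi\mapsto \xi$-translated by $x$, with the adjoint of a similar map. More concretely, $T_{x,y}=V_{x}^{*}\circ(\text{right action})\circ V$ built from the vectors $\hat x,\hat y\in L^2(M\ominus N)$. The key point is that the maps $V:\xi\mapsto x\xi$ (suitably interpreted) from $L^2N$ into the bimodule $\H=L^2(M\ominus N)$ intertwine the right $N$-action, so that $T_{x,y}$ is a ``matrix coefficient'' $\langle \hat x\,\cdot\,,\hat y\rangle$-type operator determined solely by the $N$-$N$ bimodule structure of $\H$ and the two vectors. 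I would write this as $T_{x,y}=L_{\hat x}^*\,JyJ$-style compositions and record that $T_{x,y}$ depends continuously (in norm, hence in the $N$-$N$/$JNJ$-$JNJ$ topology) on the vectors, with $\|T\|\lesssim$ product of norms in a suitable bounded-vector sense.

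Second, use coarseness: $\H\subset \bigoplus_k L^2N\bar\otimes L^2N$ as $N$-$N$ bimodules. Approximate $\hat x,\hat y$ by vectors with finitely many nonzero coordinates, each coordinate a finite sum of elementary tensors $a\otimes b$ with $a,b\in N$ (left/right bounded). For such elementary vectors in the coarse bimodule, the associated coefficient operator on $L^2N$ is $\xi\mapsto \langle \xi,\cdot\rangle$-type, explicitly of the form $a\,P_{\text{rank one}}\,Jb^*J$ composed with elements of $N$ and $JNJ$, hence of finite rank. So the approximants give finite-rank operators, lying in $\K(L^2N)$.

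The main obstacle is the approximation step: the vectors $\hat x,\hat y$ are generally not left- and right-bounded simultaneously in the coarse decomposition, and norm approximation in $\H$ does not give norm convergence of the operators. Here I would use the $N$-$N$ topology rather than the norm: cut by projections $p\in N$, $q\in JNJ$ with $\omega(1-p),\rho(1-q)$ small so that $p\hat x q$ becomes bounded (via the standard spectral truncation of the Radon--Nikodym-type operators associated to the vector), and note that the seminorms $r_{\omega,\rho}$ of $T-pTq$ are controlled by $\omega(1-p)^{1/2}$, $\rho(1-q)^{1/2}$. Alternatively, invoke Theorem \ref{Thm: weak topology}: it suffices to show that every functional in $\B(L^2N)^\sharp_J$ vanishing on $\K(L^2N)$ vanishes on $T_{x,y}$; such functionals are normal in each variable on $N$ and $JNJ$, so by density of $N\hat x N$-type bounded vectors and normality one reduces to the bounded elementary case treated above. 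Combining, $e_NxJyJe_N\in\KKK^{\infty,1}(N)$, i.e. $N$ is mixing.
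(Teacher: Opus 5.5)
Your skeleton agrees with the paper's. You reduce, via the dual description of $\KKK^{\infty,1}(N)$, to states $\omega$ that are normal on $N$ and on $JNJ$ and vanish on $\K(L^2N)$; you pass to the coarse bimodule; and you note that elementary tensors give finite-rank operators. But the one step that needs an idea is left unproved, and neither of your fixes supplies it.

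Since $\omega$ kills the compacts, it is not normal on $\B(L^2N)$. So approximating $T=e_NxJyJe_N$ by finite-rank operators in some operator topology says nothing about $\omega(T)$ unless the errors are seen only through $\omega|_N$ and $\omega|_{JNJ}$.

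\begin{itemize}
\item Your ``continuity in a suitable bounded-vector sense'' does not deliver this. The relevant norm is the module norm $\|\langle\zeta,\zeta\rangle_N\|^{1/2}$, for which finite sums of elementary tensors are in general not dense. Right $N$-modular maps $L^2N\to\HH^{\oplus I}$ correspond to all of $\B(L^2N,(L^2N)^{\oplus I})\bar\otimes N$, whereas norm limits of elementary-tensor maps only reach the norm closure of $\K\odot N$. Moreover, $L^2$-approximation of $\hat x,\hat y$ gives no uniform operator bounds at all.
\item Fix (a) targets the wrong property. The operators $x^*e_N$ and $JyJe_N$ are already bounded, and nothing in your sketch shows that cutting by projections makes them norm limits of elementary-tensor maps.
\item Fix (b) merely restates the goal. $T$ lies in neither $N$ nor $JNJ$, and you do not say how normality on each algebra separately controls $\omega(T)$.
\end{itemize}

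The missing mechanism, which is the paper's proof, is a modular factorization combined with Cauchy--Schwarz. With $U$ the bimodular unitary onto $\HH^{\oplus I}$, write $T=A^*B$, where $A=Ux^*e_N$ is right $N$-modular and $B=UJyJe_N$ is left $N$-modular. Then $X=AV^*\in\B(L^2N,(L^2N)^{\oplus I})\bar\otimes N$ and $Y=BW^*\in JNJ\bar\otimes\B(L^2N,(L^2N)^{\oplus I})$. Kaplansky density gives uniformly bounded nets $X_k\in\K\odot N$ and $Y_k\in JNJ\odot\K$ converging $*$-strongly. The operators $A_k=X_kV$ and $B_k=Y_kW$ remain right-, respectively left-, modular, and $A_k^*B_k=V^*X_k^*Y_kW$ is compact. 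Because $(A-A_k)^*(A-A_k)\in N$ and $(B-B_k)^*(B-B_k)\in JNJ$ tend to $0$ strongly with uniform bounds, you get
\[ |\omega(T)|=|\omega(A^*B-A_k^*B_k)|\le\omega((A-A_k)^*(A-A_k))^{1/2}\omega(B_k^*B_k)^{1/2}+\omega(A^*A)^{1/2}\omega((B-B_k)^*(B-B_k))^{1/2}\to 0. \]
This is exactly where the normality of $\omega$ on $N$ and on $JNJ$ separately is used. Your proposal lacks this splitting: the left error lies in $N$, the right error lies in $JNJ$, and the approximation is bounded and strong rather than in vector norm.
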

\begin{proof}
    Since $ \KKK^{\infty,1}(N) $ is also the weak $N$-$N$ and $JNJ$-$JNJ$ closure of $\KK(L^2N)$, it suffices to show that for any state $\omega \in \B(L^2N)^{\sharp}_J$ (i.e. $\omega$ is normal on both $N$ and $JNJ$) with $\omega|_{\KK(L^2N)} = 0$, we have $\omega( e_NxJyJe_N )=0$ for all $x,y\in M\ominus N$. 

    Since $ L^2(M\ominus N) $ is coarse, there exists a $N$-bimodular unitary $U: L^2(M\ominus N) \to \HH^{\oplus I}$ where $\HH=L^2N\otimes L^2N$ is the coarse bimodule. Consider $ A:= Ux^*e_N: L^2N\to (L^2N)^{\oplus I}\otimes L^2N=\HH$ and  $ B:= UJyJe_N: L^2N\to L^2N\otimes (L^2N)^{\oplus I}=\HH $ then $A$ is right $N$-modular and $B$ is left $N$-modular.

    Fix a unit vector $\xi\in L^2N$, and let $V: L^2N\to L^2N\otimes L^2N$, $ W: L^2N\to L^2N\otimes L^2N $ be the isometries
    $$ V\eta:= \xi\otimes \eta,\quad W\eta:=\eta\otimes \xi,\quad \forall \eta\in L^2N.  $$
    Then since both $A$ and $V^*$ are right $N$-modular, $X:=AV^*\in \B(L^2N,(L^2N)^{\oplus I})\bar{\otimes} N$. Similarly, we have $Y:= BW^*\in  JNJ\bar{\otimes} \B(L^2N,(L^2N)^{\oplus I})$. Let $X_k\in \K(L^2N,(L^2N)^{\oplus I})\bar{\odot} N$ be a net $*$-strongly converging to $ X $, and $ Y_k \in JNJ\bar{\odot} \K(L^2N,(L^2N)^{\oplus I})  $ be a net $*$-strongly converging to $Y$. Take $ A_k:= X_kV $ and $B_k:=Y_kW$. Now we can approximate $ e_NxJyJe_N = A^*B $ by $ A_k^*B_k $. Note that $X_k^*Y_k\in \KK(L^2N\otimes L^2N)$ and therefore $ A_k^*B_k=V^*X_k^*Y_kW\in \KK(L^2N) $.

    Indeed, since $ \omega|_{\KK(L^2N)}=0 $, $\omega( A_k^*B_k )=0$. We have
    \begin{align*}
        &|\omega(  e_NxJyJe_N )|= |\omega(A^*B)|\leq |\omega((A-A_k)^*B_k)|+|\omega(A^*(B-B_k))|\\
        \leq & \omega((A-A_k)^*(A-A_k))^{1/2}\omega(B_k^*B_k)^{1/2}+ \omega( A^*A )^{1/2}\omega( (B-B_k)^*(B-B_k) )^{1/2}.
    \end{align*}
    Since $ N\ni (A-A_k)^*(A-A_k)\to $ strongly, $ \omega((A-A_k)^*(A-A_k)) \to 0$, similarly $\omega( (B-B_k)^*(B-B_k) )\to 0  $. Also, as we can assume that $A_k$ and $B_k$ are uniformly bounded, we obtain $ |\omega(  e_NxJyJe_N )|=0 $.
\end{proof}

\begin{exmp}
    Let $\Gamma$ be an discrete group with a subgroup $H$. We say $ H $ is almost malnormal in $\Gamma$ if for every $ g\in \Gamma\backslash H $, $ gHg^{-1}\cap H $ is finite. Note that this also implies $ g'Hg^{-1}\cap H $ is finite for all $g',g\in \Gamma \backslash H$. For every malnormal subgroup $H$, the group von Neumann subalgebra $L(H)$ is mixing in $L(\Gamma)$ (see for instance \cite{BoutonnetCarderi2017}).
\end{exmp}

\section{Upgrading biexactness}

\subsection{About $ N $-$N$ and $N'$-$N'$ topology on $\B(L^2M)$ for subalgebra $ N\subset M $.}

Suppose $ N\subset M $ is a subalgebra with expectation, to show biexactness of $M$, the related topology is the (weak) $M$-$M$ and $JMJ$-$JMJ$ topology on $\B(L^2M)$. However, if we want to apply the biexactness of $N\subset \B(L^2M)$, the related topology is the (weak) $N$-$N$ and $N'$-$N'$ topology on $ \B(L^2M) $ (see \cite[Lemma 6.5]{ding2023biexact} and the paragraph above the Lemma).

More precisely, by \cite[Lemma 6.5]{ding2023biexact}, biexactness of a von Neumann algebra does not depend on the representation, i.e. if $\HH$ is a normal representation of $N$, define $\KKK(N,\HH)$ to be the $N$-$N$ and $N'$-$N'$ closure of $\K(\HH) $, and
$$ \S(N,\HH):=\{T\in \B(\HH): [T,x]\in \KKK(N,\HH),\forall x\in N'\}. $$
Then the inclusion $N\subset \S(N,\HH)$ is $N$-nuclear iff $N$ is biexact.

In particular, in our cases, we will take $\HH = L^2M$, so the inclusion $N\subset \S(N,L^2M)$ is $N$-nuclear iff $N$ is biexact.

A technical difficulty to relate biexactness of $N$ to biexactness of $M$ is that: $ \KKK(N,L^2M) $ is the $N$-$N$ and $N'$-$N'$ closure of $\KK(L^2M)$, while what we need for $M$ is $ \KKK(M)=\KKK(M,L^2M) $ which is the $ M $-$M$ and $JMJ$-$JMJ$ closure of $\KK(L^2M)$.

In this subsection, we show that in fact the difference between those two topologies can be killed by the projection $ \bar{q}:= \bar{q}_{JMJe_NJMJ}= \vee_{u\in \mathcal{U}(M)} \iota( JuJe_N Ju^*J ) $.

\begin{lemma}\label{lemma for N' topology}
    Let $\bar{q}=\bar{q}_{JMJe_NJMJ }$ be the unit of $ (JMJe_NJMJ)^{**}\subset \BB(L^2M)^{**} $. We have $ [p^{N'}_{\nor},\bar{q}]=[p^{M'}_{\nor},\bar{q}]=0 $, and $ \bar{q}p^{N'}_{\nor} = \bar{q}p^{M'}_{\nor} $.
\end{lemma}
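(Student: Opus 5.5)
The plan is to compute with projections in $\B(L^2M)^{**}$, using two facts: for a von Neumann subalgebra $P\subset\B(L^2M)$, $p^P_{\nor}$ is a central projection of $P^{**}$; and in a faithful normal representation of $\B(L^2M)^{**}$ on $\K$, the range $p^P_{\nor}\K$ is the set of vectors whose vector state is normal on $P$. We write $M'=JMJ$ and $N'$ for the commutant of $N$ in $\B(L^2M)$, so that $JMJ\subset N'$ and $e_N\in N'$.

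\emph{Commutation.} For every $u\in\mathcal U(M)$ the projection $JuJe_NJu^*J$ lies in $N'$. Hence $\iota(JuJe_NJu^*J)\in (N')^{**}$ commutes with the central projection $p^{N'}_{\nor}$ of $(N')^{**}$. A supremum of projections commuting with a fixed projection again commutes with it, so $[\bar q,p^{N'}_{\nor}]=0$.

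For $p^{M'}_{\nor}$ we use \cite[Lemma 3.8]{DKE24}, which says $\iota(e_N)$ commutes with $p^{JMJ}_{\nor}$. Moreover $\iota(JuJ)\in (JMJ)^{**}$ commutes with $p^{JMJ}_{\nor}$ by centrality. Since $\iota$ is a homomorphism into the bidual, $\iota(JuJe_NJu^*J)=\iota(JuJ)\iota(e_N)\iota(Ju^*J)$ commutes with $p^{M'}_{\nor}$, and taking the supremum gives $[\bar q,p^{M'}_{\nor}]=0$.

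\emph{The inequality $\bar q p^{N'}_{\nor}\le \bar q p^{M'}_{\nor}$.} This is formal. Since $JMJ\subset N'$, any vector state normal on $N'$ is normal on $JMJ$, so $p^{N'}_{\nor}\le p^{M'}_{\nor}$. Multiplying by the commuting projection $\bar q$ gives the inequality.

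\emph{The reverse inequality.} This is the main step. We use the basic-construction identity $e_NN'e_N=JNJe_N$: compressing $N'$ to $e_N L^2M\cong L^2N$ gives the commutant of $N$ acting on $L^2N$, which is $JNJ$. Thus there is a normal u.c.p.\ map $\theta:N'\to JNJ$ with $e_NTe_N=\theta(T)e_N$ for $T\in N'$.

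Now take a vector $\xi\in \iota(e_N)p^{M'}_{\nor}\K$. For $T\in N'$, multiplicativity of $\iota$ gives
\[
\langle \iota(T)\xi,\xi\rangle=\langle\iota(e_NTe_N)\xi,\xi\rangle=\langle\iota(\theta(T))\iota(e_N)\xi,\xi\rangle=\varphi_\xi(\theta(T)).
\]
Here $\varphi_\xi$ is normal on $JNJ\subset JMJ$ and $\theta$ is normal, so $\varphi_\xi$ is normal on $N'$, i.e.\ $\xi\in p^{N'}_{\nor}\K$.

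To pass to the other pieces, conjugate by $\iota(JuJ)$. This unitary preserves both $p^{M'}_{\nor}\K$ and $p^{N'}_{\nor}\K$, because $JuJ$ lies in both $JMJ$ and $N'$ and these projections are central there. Hence the range of $\iota(JuJe_NJu^*J)p^{M'}_{\nor}$ lies in $p^{N'}_{\nor}\K$ for every $u$.

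Since $p^{M'}_{\nor}$ commutes with each $\iota(JuJe_NJu^*J)$, the range of $\bar q p^{M'}_{\nor}$ is the closed span of these ranges. As $p^{N'}_{\nor}\K$ is a closed subspace, we get $\bar q p^{M'}_{\nor}\le p^{N'}_{\nor}$, hence $\bar q p^{M'}_{\nor}\le\bar q p^{N'}_{\nor}$.

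The points needing most care are the identification $e_NN'e_N=JNJe_N$ together with the normality of $\theta$, and keeping track of where $\iota$ is multiplicative; the rest is bookkeeping with central supports.
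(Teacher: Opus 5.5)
Your proof is correct and follows essentially the paper's argument. In both, the key step is that compressing $N'$ by $e_N$ lands in $JNJe_N$ through a normal conditional expectation. Hence vector states that are normal on $JMJ$ and supported under $\iota(e_N)$, together with their $\iota(JuJ)$-translates, are normal on $N'$; this gives $\bar q p^{M'}_{\nor}\le p^{N'}_{\nor}$, while $p^{N'}_{\nor}\le p^{M'}_{\nor}$ is formal.

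The differences are minor. You obtain the commutation relations up front, using centrality of $p^{N'}_{\nor}$ in $(N')^{**}$ and \cite[Lemma 3.8]{DKE24} for $p^{M'}_{\nor}$. The paper instead deduces $[p^{M'}_{\nor},\bar q]=0$ from the range inclusion. It also uses the domination $\varphi_\xi\circ\iota\circ\mathrm{Ad}(e_N)\le\varphi_\xi\circ\iota$ on $JNJ$, so it never needs $\iota(e_N)$ to commute with $p^{JMJ}_{\nor}$.
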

\begin{proof}
    Take again a faithful normal representation $\KK$ of $ \BB(L^2M)^{**} $. We first claim that $ \bar{q}p_{\nor}^{M'}=\bar{q}p_{\nor}^{JMJ} \KK\subset p^{N'}_{\nor}\KK $.
    
    Indeed, note that as $ \bar{q} $ commutes with $ JMJ $ (as $ JMJ $ is in the multiplier of $ JMJe_NJMJ $, or also from the formula $ \bar{q}=\vee_{u\in \mathcal{U}(JMJ)} \iota(ue_Nu^*) $ ), $\bar{q}$ also commutes with $ p^{JMJ}_{\nor} $. In particular, $ \bar{q}p_{\nor}^{JMJ} \KK \subset \overline{ \iota(JMJe_N)p^{JMJ}_{\nor}\KK } $. So to prove the claim, it suffices to show $ \iota(JxJ e_N)\xi \in p^{N'}_{\nor}\K  $ for all $x\in M$ and $\xi\in p^{JMJ}_{\nor}\K$.  Indeed, suppose $ \xi \in p_{\nor}^{JMJ}\KK $, then the vector state $ \varphi_{\xi}\circ \iota $ is normal on $JMJ$. Since for $ n\in JNJ $, $n = \text{Ad}(e_N)(n)+\text{Ad}((e_N)^{\perp})(n)$, we have
    $ \varphi_{\xi}\circ \iota \circ \text{Ad}(e_N) \leq \varphi_\xi\circ\iota $ on $ JNJ $, forcing $  \varphi_{\xi}\circ \iota \circ \text{Ad}(e_N) $ to also be normal on $JNJ$. Now, as $ \varphi_{\iota(JxJe_N)\xi} = \varphi_{\xi}\circ \iota\circ \text{Ad}(e_N)\circ \text{Ad}( Jx^*J ) = \varphi_{\xi}\circ \iota\circ \text{Ad}(e_N)\circ E_{JNJ} \circ \text{Ad}( Jx^*J )$ on $N'=J\langle M,e_N\rangle J$, where $E_{JNJ}:N'=J\langle M,e_N\rangle J\to JNJ$ is the (non-faithful) normal conditional expectation, $ \varphi_{\iota(JxJ e_N)\xi} $ is normal on $N'$. In particular, $  \iota(JxJe_N)\xi\in p^{N'}_{\nor}\KK$.

    From $ \bar{q}p_{\nor}^{M'} \KK\subset p^{N'}_{\nor}\KK \subset p_{\nor}^{M'}\K$, we obtain $ p_{\nor}^{M'}\bar{q}p_{\nor}^{M'} = \bar{q}p_{\nor}^{M'} $. Taking the adjoint, we have $ p_{\nor}^{M'}\bar{q}p_{\nor}^{M'} =\bar{q}p_{\nor}^{M'}= p_{\nor}^{M'}\bar{q} $.
    
    Now that we have $[p_{\nor}^{M'},\bar{q}]=0 $, $ \bar{q}p_{\nor}^{M'} \KK\subset p^{N'}_{\nor}\KK $ then  implies $ p_{\nor}^{M'}\bar{q}\leq p^{N'}_{\nor} $. The rest of the statement follows straightforwordly. 
\end{proof}

The weak $ N $-$N$ and $ N' $-$N'$ topology on $\B(L^2M)$ is given by the predual $$ \left(p^{N}_{\nor}p^{N'}_{\nor}\B(L^2M)^{**}p^N_{\nor}p^{N'}_{\nor}\right)_*. $$ Therefore $ p^{N'}_{\nor}p^{N}_{\nor}\iota(\KKK(N,L^2M) )p^{N}_{\nor}p^{N'}_{\nor} \subset p^{N}_{\nor}p^{N'}_{\nor}\KK(L^2M)^{**} $. Hence, we denote the bidual version of the small-at-infinity boundary piece $ \S(N,L^2M) $ as the following
$$ \tilde{\S}(N,L^2M) := \{ T\in p^{N}_{\nor}p^{N'}_{\nor}\B(L^2M)^{**}p^{N}_{\nor}p^{N'}_{\nor}: [T,\iota(y)]\in p^{N}_{\nor}p^{N'}_{\nor}\KK(L^2M)^{**},\forall y\in N' \}. $$

\begin{lemma}\label{lemma for S(N)}
    Let $ \bar{q} = \bar{q}_{JMJe_NJMJ} $ and $ q=q_{JMJe_NJMJ} = p^{\sharp}_{\nor}\bar{q} $, then
    $$ q\tilde{\S}(N,L^2M)qq_{\KKK}^{\perp}\subset \tilde{\S}(M)q_{\KKK}^{\perp} . $$
\end{lemma}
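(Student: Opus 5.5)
The plan is to verify directly the description of $\tilde{\S}(M)q_{\KKK}^{\perp}$ from the Remark after the definition of $\tilde{\S}_{\XX}(M)$. That Remark gives $\tilde{\S}(M)q_{\KKK}^{\perp}=\iota(JMJ)'q_{\KKK}^{\perp}$, computed inside the corner $\B(L^2M)^{\sharp*}_J=p^\sharp_{\nor}\B(L^2M)^{**}p^\sharp_{\nor}$. Multiplication in this corner is the one of $\B(L^2M)^{**}$. Since $p^\sharp_{\nor}$ commutes with $\iota(JMJ)$, we also have $\iota^\sharp(JxJ)=p^\sharp_{\nor}\iota(JxJ)$. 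So for $T\in\tilde{\S}(N,L^2M)$ it suffices to prove two things: $qTqq_{\KKK}^{\perp}$ lies in the corner, and it commutes with $\iota(JxJ)$ for every $x\in M$.

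\emph{Step 1 (position of $q$).} I first show $q\le p^N_{\nor}p^{N'}_{\nor}$, where the right-hand side is a projection because $p^N_{\nor}\in N^{**}$ and $p^{N'}_{\nor}\in (N')^{**}$ commute.
\begin{itemize}
\item Any vector whose state is normal on $M$ is normal on $N$, so $p^M_{\nor}\le p^N_{\nor}$.
\item Lemma \ref{lemma for N' topology} gives $\bar{q}p^{M'}_{\nor}\le p^{N'}_{\nor}$, and $\bar q$ commutes with $p^M_{\nor}$ and $p^{M'}_{\nor}$.
\end{itemize}
Hence $q=p^M_{\nor}p^{M'}_{\nor}\bar q$ is a projection dominated by both $p^N_{\nor}$ and $p^{N'}_{\nor}$. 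As a consequence $qT=q p^N_{\nor}p^{N'}_{\nor}T$, and $qTq$ lies in $p^\sharp_{\nor}\B(L^2M)^{**}p^\sharp_{\nor}$. Also, $q_{\KKK}$ is central in this corner, so it commutes with $q$ and with $qTq$.

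\emph{Step 2 (commutation).} Fix $x\in M$. Since $N\subset M$, we have $JxJ\in JMJ\subset N'$, so by definition of $\tilde{\S}(N,L^2M)$,
\[ [T,\iota(JxJ)]\in p^N_{\nor}p^{N'}_{\nor}\K(L^2M)^{**}. \]
Next, $\iota(JxJ)$ commutes with $\bar q$, because $JMJ$ lies in the multiplier of $\overline{JMJe_NJMJ}$. It also commutes with $p^\sharp_{\nor}$, because $p^\sharp_{\nor}$ lies in $M^{**}\vee(JMJ)^{**}$ and commutes with $(JMJ)^{**}$. So $\iota(JxJ)$ commutes with $q$, and therefore
\[ [qTqq_{\KKK}^{\perp},\iota(JxJ)]=q_{\KKK}^{\perp}\,q\,[T,\iota(JxJ)]\,q . \]
The right-hand side lies in $q_{\KKK}^{\perp}p^\sharp_{\nor}\K(L^2M)^{**}p^\sharp_{\nor}$. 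Now $\K(L^2M)^{**}=\bar q_{\KKK}\B(L^2M)^{**}$ with $\bar q_{\KKK}$ central, because $\K(L^2M)$ is an ideal. Hence $p^\sharp_{\nor}\K(L^2M)^{**}p^\sharp_{\nor}=q_{\KKK}\B(L^2M)^{\sharp*}_J$, which is annihilated by $q_{\KKK}^{\perp}$. So the commutator vanishes, giving $qTqq_{\KKK}^{\perp}\in\iota(JMJ)'q_{\KKK}^{\perp}=\tilde{\S}(M)q_{\KKK}^{\perp}$.

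The main obstacle is Step 1. The compact-operator condition defining $\tilde{\S}(N,L^2M)$ lives in the corner cut by $p^N_{\nor}p^{N'}_{\nor}$, which is a priori unrelated to $p^\sharp_{\nor}$. Only the inequality $q\le p^{N'}_{\nor}$ from Lemma \ref{lemma for N' topology} lets us move the commutator information into the $M$-$M$ and $JMJ$-$JMJ$ corner. Everything else is bookkeeping with commuting projections.
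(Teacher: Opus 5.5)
Your proposal is correct and follows essentially the same route as the paper. The paper likewise reduces to commutation with $\iota(JMJ)$ and pulls $q$ and $q_{\KKK}^{\perp}$ through the commutator. It then uses Lemma \ref{lemma for N' topology} together with $p^{\sharp}_{\nor}\le p^{M}_{\nor}\le p^{N}_{\nor}$ to get $qp^{N}_{\nor}=qp^{N'}_{\nor}=q$, so the commutator lies in $q\KK(L^2M)^{**}qq_{\KKK}^{\perp}=0$; you only spell out the projection bookkeeping a bit more explicitly.
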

\begin{proof}
    It suffices to show $q\tilde{\S}(N,L^2M)qq_{\KKK}^{\perp}\subset \iota(JMJ)'$.

    Note that $\iota(JMJ)$ commutes with $q$ and $q^{\perp}_{\KKK}$, we have
    \begin{align*}
        [q\tilde{\S}(N,L^2M)qq_{\KKK}^{\perp}, \iota(JMJ)] &= q[\tilde{\S}(N,L^2M),\iota(JMJ)]qq_{\KKK}^{\perp}\\
       & \subset q(p^{N}_{\nor}p^{N'}_{\nor}\KK(L^2M)^{**})qq_{\KKK}^{\perp}.
    \end{align*}
    By Lemma \ref{lemma for N' topology}, we have $ qp^{N'}_{\nor} = p^{\sharp}_{\nor}\bar{q}p^{N'}_{\nor} = p^{\sharp}_{\nor}\bar{q}p^{JMJ}_{\nor} = p^{\sharp}_{\nor}\bar{q} =q $. On the other hand, as $p^{N}_{\nor}\geq p^{M}_{\nor}\geq p^{\sharp}_{\nor} $, we have $ qp^{N}_{\nor} = \bar{q}p^{\sharp}_{\nor}p^{N}_{\nor} = \bar{q}p^{\sharp}_{\nor} = q $. Plugging in those two identities, we obtain $$ [q\tilde{\S}(N,L^2M)qq_{\KKK}^{\perp}, \iota(JMJ)]\subset q\KK(L^2M)^{**}qq_{\KKK}^{\perp}=0. $$
\end{proof}
 
\begin{lemma}\label{expectation from BH to S}
    If $ N $ is a biexact von Neumann algebra, and $\HH$ is a normal representation of $N$, then there is a $N$-bimodular u.c.p map $ \psi:\B(L^2N)\to\tilde{\S}(N,\HH) $ such that $ \psi|_{N}=\iota^{\sharp}|_{N} $.
\end{lemma}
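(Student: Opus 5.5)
The plan is to combine the bidual description of $\tilde{\S}(N,\HH)$ with an Arveson extension and a point–weak${}^*$ compactness argument. Throughout, $\iota^{\sharp}$ denotes $\mathrm{Ad}(p^{N}_{\nor}p^{N'}_{\nor})\circ\iota$, viewed as a map $\B(\HH)\to p^{N}_{\nor}p^{N'}_{\nor}\B(\HH)^{**}p^{N}_{\nor}p^{N'}_{\nor}$. The given inclusion is $N\subset\B(L^2N)$, and the domain of $\psi$ is $\B(L^2N)$, exactly as in the statement.

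\textbf{Step 1: $\iota^{\sharp}$ carries $\S(N,\HH)$ into $\tilde{\S}(N,\HH)$.}
\begin{itemize}
\item The projections $p^N_{\nor}$ and $p^{N'}_{\nor}$ commute with $N^{**}\vee (N')^{**}$. Hence $N$ and $N'$ lie in the multiplicative domain of $\iota^{\sharp}$.
\item It follows that for $T\in\S(N,\HH)$ and $y\in N'$ we have $[\iota^{\sharp}(T),\iota(y)]=\iota^{\sharp}([T,y])$, and $[T,y]\in\KKK(N,\HH)$.
\item The space $\KKK(N,\HH)$ is the $N$-$N$ and $N'$-$N'$ closure of $\K(\HH)$, hence also its weak closure by Theorem~\ref{Thm: weak topology}.
\item The predual of the corner $p^{N}_{\nor}p^{N'}_{\nor}\B(\HH)^{**}p^{N}_{\nor}p^{N'}_{\nor}$ consists exactly of the functionals that are continuous for these topologies. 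Therefore $\iota^{\sharp}(\KKK(N,\HH))$ lies in the weak${}^*$-closed set $p^{N}_{\nor}p^{N'}_{\nor}\K(\HH)^{**}p^{N}_{\nor}p^{N'}_{\nor}$, which is what membership in $\tilde{\S}(N,\HH)$ requires.
\item The same reasoning shows that $\tilde{\S}(N,\HH)$ is a weak${}^*$-closed operator system, because commutation with the fixed operators $\iota(y)$ is weak${}^*$ continuous.
\end{itemize}

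\textbf{Step 2: approximate factorizations and their extensions.}
\begin{itemize}
\item By \cite[Lemma 6.5]{ding2023biexact}, biexactness of $N$ means that $N\subset\S(N,\HH)$ is $N$-nuclear. So there are c.c.p.\ maps $\phi_n:N\to\M_{k(n)}$ and $\theta_n:\M_{k(n)}\to\S(N,\HH)$ with $\theta_n\circ\phi_n\to\mathrm{id}_N$ pointwise in the $N$-$N$ topology.
\item Composing with $\iota^{\sharp}$ gives maps into $\tilde{\S}(N,\HH)$, by Step 1.
\item These compositions converge point–weak${}^*$ to $\iota^{\sharp}|_N$. Indeed, every normal functional on the corner restricts on $\B(\HH)$ to a functional in $\B(\HH)^{N\sharp N}$, which is continuous for the $N$-$N$ topology.
\item Regard $N\subset\B(L^2N)$ in its standard form. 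By Arveson's extension theorem each $\phi_n$ extends to a c.c.p.\ map $\tilde\phi_n:\B(L^2N)\to\M_{k(n)}$.
\item Set $\psi_n:=\iota^{\sharp}\circ\theta_n\circ\tilde\phi_n:\B(L^2N)\to\tilde{\S}(N,\HH)$.
\end{itemize}

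\textbf{Step 3: the limit map.}
\begin{itemize}
\item The unit ball of c.c.p.\ maps into the weak${}^*$-closed operator system $\tilde{\S}(N,\HH)$ is compact in the point–weak${}^*$ topology. Let $\psi$ be a cluster point of $(\psi_n)$.
\item Then $\psi$ is c.c.p., and $\psi|_N=\lim\iota^{\sharp}\theta_n\phi_n=\iota^{\sharp}|_N$.
\item In particular $\psi(1)=\iota^{\sharp}(1)=p^{N}_{\nor}p^{N'}_{\nor}$, the unit of $\tilde{\S}(N,\HH)$, so $\psi$ is u.c.p.
\item Since $\iota^{\sharp}|_N$ is a $*$-homomorphism (Step 1), $N$ lies in the multiplicative domain of $\psi$. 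Choi's theorem then gives $\psi(aTb)=\psi(a)\psi(T)\psi(b)$ for $a,b\in N$ and $T\in\B(L^2N)$, which is the required $N$-bimodularity.
\end{itemize}

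\textbf{Main obstacle.} The delicate point is Step 1 together with the convergence claim in Step 2: one must check that the topology defining $\KKK(N,\HH)$, which involves both $N$ and $N'$ seminorms, matches precisely the weak${}^*$ topology of the corner $p^{N}_{\nor}p^{N'}_{\nor}\B(\HH)^{**}p^{N}_{\nor}p^{N'}_{\nor}$. I would handle this as in the discussion preceding Theorem~\ref{thm: bidual characterization of biexact}, by identifying the predual of this corner with $\B(\HH)^{N\sharp N}\cap\B(\HH)^{N'\sharp N'}$ through Theorem~\ref{Thm: weak topology}. Separability, needed in Theorem~\ref{thm: bidual characterization of biexact}, is not required here, since we only use the easy direction from nuclearity to weak${}^*$-nuclearity.
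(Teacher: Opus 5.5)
Your proposal is correct and follows essentially the same route as the paper. You factor the $N$-nuclear embedding $N\subset\S(N,\HH)$ through matrix algebras, extend to $\B(L^2N)$ using injectivity of $\M_{k}(\C)$, compose with $\text{Ad}(p^N_{\nor}p^{N'}_{\nor})\circ\iota$, take a point--weak$^*$ cluster point, and obtain $N$-bimodularity from the multiplicative domain. Your Step 1 spells out two facts the paper treats as immediate from the definitions: the inclusion $\text{Ad}(p^N_{\nor}p^{N'}_{\nor})(\iota(\S(N,\HH)))\subset\tilde{\S}(N,\HH)$, and the weak$^*$-closedness of $\tilde{\S}(N,\HH)$.
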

\begin{proof}
    Since the embeding $ N\subset \S(N,\HH) $ is $N$-nuclear, there exists a net of c.c.p. maps $ \phi_n: N\to \S(N,\HH)  $ factoring through matrix algebras $\mathbb{M}_{k_n}(\C)$ such that $\phi_n$ converges to the inclusion $ N\subset \S(N,\HH) $ in the weak $ N $-$N$ and $N'$-$N'$ topology (as this is weaker than the $N$-$N$ topology). As $\mathbb{M}_{k_n}(\C)$ is injective, each $\phi_n$ extends to a $\tilde{\phi}_n: \B(L^2N) \to {\S}(N,\HH) $. Now, as the weak $ N $-$N$ and $N'$-$N'$ topology is given by the predual $ \left(p^N_{\nor}p^{N'}_{\nor} \B(\HH)^{**}p^N_{\nor}p^{N'}_{\nor}\right)_* $, we can take a weak${}^*$ limit $\psi$ of the net $ \text{Ad}(p^N_{\nor}p^{N'}_{\nor})\circ \iota\circ \tilde{\phi}_n: \B(L^2N)\to \tilde{\S}(N,\HH) $, (since $  \text{Ad}(p^N_{\nor}p^{N'}_{\nor})(\iota( \S(N,\HH) )) \subset \tilde{\S}(N,\HH)$ by definition). This provide us the desired u.c.p map $ \psi: \B(L^2N)\to \tilde{\S}(N,\HH) $, which is $N$-bimodular as $N$ is in the multiplicative domain.
\end{proof}

\subsection{Mixing biexact subalgebra $N\subset M$}
Now fix a mixing biexact subalgebra $N\subset M$ with expectation. We recall the crucial technique in \cite{DKE24} for mixing subalgebras which is generalized in \cite{toyosawa2025relative}.

\begin{lemma}[{\cite[Lemma 5.10]{toyosawa2025relative}}]\label{lem: basic construction in bidual}
    Let $ N\subset M$ be a mixing subalgebra with expectation, and let $q = q_{JMJe_NJMJ} = \bar{q}_{JMJe_NJMJ} p^{\sharp}_{\nor}$. We have
    $$ q\iota^{\sharp}(x)qq^{\perp}_{\KKK}=q\iota^{\sharp}(E_N(x))qq^{\perp}_{\KKK}= \iota^{\sharp}(E_N(x))qq^{\perp}_{\KKK},\quad \forall x\in M. $$
\end{lemma}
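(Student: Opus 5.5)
The plan is to prove the second equality first, since it only needs commutation relations. Then I reduce the first equality to the case $E_N(x)=0$ and handle it with the mixing hypothesis, one ``generator'' of $\bar{q}$ at a time.

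\emph{Second equality.} Fix $x\in M$ and put $y:=E_N(x)\in N$.
\begin{itemize}
\item Since $N$ commutes with $e_N$ and $M$ commutes with $JMJ$, $y$ commutes with $JuJe_NJu^*J$ for every $u\in\mathcal{U}(M)$.
\item Hence $\iota(y)$ commutes with each $\iota(JuJe_NJu^*J)$, and therefore with their supremum $\bar{q}$.
\item $\iota(y)\in M^{**}$ commutes with $p^\sharp_{\nor}$, and $M$ lies in the multiplicative domain of $\iota^\sharp$.
\item $q_{\KKK}$ is central in $\B(L^2M)^{\sharp*}_J$.
\end{itemize}
Together these give $q\iota^\sharp(y)qq_{\KKK}^\perp=\iota^\sharp(y)qq_{\KKK}^\perp$.

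\emph{Reduction of the first equality.} Write $x=E_N(x)+x_0$ with $x_0\in M\ominus N$. By linearity it suffices to show $q\iota^\sharp(x_0)qq_{\KKK}^\perp=0$. Let $P_u:=\iota(JuJe_NJu^*J)$, so that $\bar{q}=\vee_u P_u$. Each $P_u$ commutes with $p^\sharp_{\nor}$ by \cite[Lemma 3.8]{DKE24}, since $JuJ$ commutes with $p^M_{\nor}$ and $p^{JMJ}_{\nor}$. Fix a faithful normal representation $\KK$ of $\B(L^2M)^{**}$. The closed span of $\bigcup_u P_up^\sharp_{\nor}q_{\KKK}^\perp\KK$ is $qq_{\KKK}^\perp\KK$, using centrality of $q_{\KKK}$. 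So it suffices to prove, for all $u,v\in\mathcal{U}(M)$,
\[
p^\sharp_{\nor}P_u\,\iota(x_0)\,P_vp^\sharp_{\nor}\,q_{\KKK}^\perp=0 ,
\]
that is, $\iota^\sharp\big(JuJ\,e_N\,x_0\,J(u^*v)J\,e_N\,Jv^*J\big)q_{\KKK}^\perp=0$. Here I used that $x_0$ commutes with $JMJ$.

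\emph{Killing each term.} Put $w=u^*v$ and split $w=E_N(w)+w_0$ with $w_0\in M\ominus N$.
\begin{itemize}
\item For the $E_N(w)$ part: $JE_N(w)J$ commutes with $e_N$, so
\[
e_Nx_0JE_N(w)Je_N=e_Nx_0e_N\,JE_N(w)J=E_N(x_0)e_NJE_N(w)J=0 .
\]
\item For the $w_0$ part: by the mixing hypothesis, $e_Nx_0Jw_0Je_N\in\KKK^{\infty,1}(N)$ as an operator on $L^2N=e_NL^2M$.
\end{itemize}
The key auxiliary claim is then $e_N\KKK^{\infty,1}(N)e_N\subset\KKK^{\infty,1}(M)$. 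I would prove it via Theorem~\ref{Thm: weak topology}. Take any state $\omega$ on $\B(L^2M)$ that is normal on $M$ and $JMJ$ and vanishes on $\KK(L^2M)$. Its compression $\omega(e_N\,\cdot\,e_N)$, restricted to $\B(L^2N)$, is normal on $N$ and on $JNJ$: this follows from the same domination argument as in Lemma~\ref{lemma for N' topology}, namely $\omega\circ\mathrm{Ad}(e_N)\le\omega$ on $N$ and on $JNJ$. It also vanishes on $\KK(L^2N)$. Hence it vanishes on the weak closure $\KKK^{\infty,1}(N)$. So $\omega$ kills $e_N\KKK^{\infty,1}(N)e_N$, which gives the claim since $\KKK^{\infty,1}(M)$ is the weak closure of $\KK(L^2M)$. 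Since $\KKK^{\infty,1}(M)$ is a $JMJ$-bimodule, it follows that
\[
JuJe_Nx_0Jw_0Je_NJv^*J\in\KKK^{\infty,1}(M)=(\iota^\sharp)^{-1}(\KK(L^2M)^{\sharp*}_J),
\]
so its image under $\iota^\sharp$ lies under $q_{\KKK}$ and is killed by $q_{\KKK}^\perp$.

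I expect the main obstacle to be this transfer of compactness from $\KKK^{\infty,1}(N)$ to $\KKK^{\infty,1}(M)$. It needs care about normality on $JNJ$ versus on $N'$, and about which functionals define the two weak topologies. The second delicate point is the density/join step that passes from the individual $P_u$ to $\bar{q}$. It needs the commutation $[P_u,p^\sharp_{\nor}]=0$ so that $p^\sharp_{\nor}$ can be moved past the generators.
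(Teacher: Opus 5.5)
Your proof is correct. The paper gives no proof of this lemma; it quotes it from \cite[Lemma 5.10]{toyosawa2025relative}, so there is no internal argument to compare against. Your proposal is a valid self-contained proof, and it uses only tools the paper already employs elsewhere:
\begin{itemize}
\item the commutation argument for the $E_N(x)$ part;
\item the join formula $\bar{q}=\vee_{u}\iota(JuJe_NJu^*J)$, which is legitimate because each generator commutes with $p^{\sharp}_{\nor}$ and hence with $p^{\sharp}_{\nor}q_{\KKK}^{\perp}$ (centrality of $q_{\KKK}$ in the corner), so $qq_{\KKK}^{\perp}$ is the join of the projections $\iota(JuJe_NJu^*J)p^{\sharp}_{\nor}q_{\KKK}^{\perp}$;
\item the splitting of $u^*v$, where the $E_N(u^*v)$ piece vanishes because $e_Nx_0e_N=E_N(x_0)e_N=0$;
\item the domination trick of Lemma \ref{lemma for N' topology}, which compresses states to $L^2N$ and gives $e_N\KKK^{\infty,1}(N)e_N\subset\KKK^{\infty,1}(M)$.
\end{itemize}

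The one step you should write out explicitly is why testing against \emph{states} suffices for membership in $\KKK^{\infty,1}(M)$. A priori, the Hahn--Banach bi-annihilator involves all functionals in $\B(L^2M)^{\sharp}_J$ vanishing on $\K(L^2M)$. Argue instead as follows:
\begin{itemize}
\item $S\in\KKK^{\infty,1}(M)$ if and only if $\iota^{\sharp}(S)q_{\KKK}^{\perp}=0$.
\item Because $q_{\KKK}$ is central in $\B(L^2M)^{\sharp*}_J$, this is detected by the normal states of the corner $q_{\KKK}^{\perp}\B(L^2M)^{\sharp*}_J$.
\item Those states correspond exactly to the states on $\B(L^2M)$ that are normal on $M$ and $JMJ$ and vanish on $\K(L^2M)$.
\end{itemize}
This is the same reduction the paper makes without comment in Lemma \ref{lem: coarse implies mixing}.
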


Intuitively, this says that $\iota^{\sharp}(M)q_{\KKK}^{\perp} $ and the projection $q$ behave like the basic construction $ \langle M,e_N\rangle $.

We now prove the key technical lemma, which can be considered as a generalization of \cite[Lemma 5.11]{toyosawa2025relative}.
\begin{lemma}\label{key lemma}
    If $N\subset M$ is an biexact mixing subalgebra with expectation, let $\psi: \B(L^2N)\to \tilde{\S}(N,L^2M)$ be a $N$-bimodular u.c.p map as in Lemma \ref{expectation from BH to S}, then the linear map
    \begin{align*}
         \phi_0:Me_N\B(L^2N)e_NM &\to \iota^{\sharp}(M)q \tilde{\S}(N,L^2M) q\iota^{\sharp}(M)q_{\KKK}^\perp \\
         xe_NTe_Ny&\mapsto \iota^{\sharp}(x)q \psi(T) q\iota^{\sharp}(y)q_{\KKK}^\perp
    \end{align*}
    is a $M$-bimodular c.c.p. map, where $ q=:q_{JMJe_NJMJ} $.
    
    In particular, as $ \iota^{\sharp}(M)q  \tilde{\S}(N,L^2M) q\iota^{\sharp}(M)q_{\KKK}^\perp\subset \tilde{\S}(M)q_{\XX_N}q_{\KKK}^{\perp} $, $ \phi_0 $ has a unital normal $M$-bimodular u.c.p extension
    $$ \phi:= (\phi_0^*|_{ \tilde{\S}(M)_*q_{\XX_N}q_{\KKK}^{\perp} }): (Me_N\BB(L^2N)e_NM)^{**} \to \tilde{\S}(M)q_{\XX_N}q_{\KKK}^{\perp}. $$
\end{lemma}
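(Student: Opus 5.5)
We show complete positivity and well-definedness at once by writing $\phi_0$ as a compression of a single c.p. map. Consider the $*$-algebra generated by $Me_N\B(L^2N)e_NM$; elements of $\mathbb{M}_k(Me_N\B(L^2N)e_NM)$ are finite sums. For a positive element we use the standard form $X^*\,\mathrm{diag}(e_NT e_N)\,X$ with $X$ a column of $e_N y$'s. Because $e_N\B(L^2N)e_N\cong\B(L^2N)$ and $e_N y e_N = E_N(y)e_N$, we can reduce any finite matrix $[x_i e_N T_{ij} e_N y_j]$ to the form $R^*[T_{ij}]S$ with $R=(e_Nx_i^*)$, $S=(e_Ny_j)$.

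The key computation is to compare $\phi_0$ on such expressions with $\iota^\sharp$-images. We aim to prove the identity
\begin{equation*}
\phi_0\Big(\sum_{i,j} x_i e_N T_{ij} e_N y_j\Big)=\sum_{i,j}\iota^\sharp(x_i)q\,\psi(T_{ij})\,q\,\iota^\sharp(y_j)q_{\KKK}^\perp,
\end{equation*}
and show that the right side depends only on the operator on the left. Write $V=(\iota^\sharp(x_i^*) q q_{\KKK}^\perp)_i$ (a column); then the right side equals $V^*\,\psi^{(k)}([T_{ij}])\,W$. Positivity then follows once we know it is positive when $[x_ie_NT_{ij}e_Ny_j]$ is, and this is reduced via Lemma~\ref{lem: basic construction in bidual}. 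That lemma says $q\iota^\sharp(x^*y)qq_\KKK^\perp=\iota^\sharp(E_N(x^*y))qq_\KKK^\perp$, mirroring $e_Nx^*ye_N=E_N(x^*y)e_N$. In other words, the assignment $\iota^\sharp(x)qq_\KKK^\perp \leftrightarrow xe_N$ preserves inner products valued in $N$ (the $N$-valued Gram matrices agree). Using $N$-bimodularity of $\psi$ ($\psi(aTb)=\iota^\sharp(a)\psi(T)\iota^\sharp(b)$ for $a,b\in N$), we get a Hilbert $N$-module isometry. Concretely, we define $U: \overline{Me_N L^2N}\to$ (range) by $xe_N\xi\mapsto \iota^\sharp(x)qq_\KKK^\perp\cdot$, and $\phi_0$ becomes $U(\,\cdot\,)U^*$ composed with $\mathrm{id}\otimes\psi$ on the "$e_N\B e_N$" factor. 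More practically, we show that $\sum_{i,j}\iota^\sharp(x_i)q\psi(T_{ij})q\iota^\sharp(x_j^*)q_\KKK^\perp\ge0$ whenever $\sum x_ie_NT_{ij}e_Nx_j^*\ge 0$. For this we use a Pimsner--Popa type reduction: choose $N$-orthonormalizing elements so that the Gram matrix $[E_N(x_i^*x_j)]$ is factored as $G=C^*C$ with $C$ over $N$. The operator positivity upstairs is then equivalent to $C[T_{ij}]C^*\ge0$ in $\mathbb{M}_k(\B(L^2N))$, and applying the c.p. map $\psi^{(k)}$ together with bimodularity transports this positivity. The same Gram argument gives well-definedness (a zero element maps to zero, since $\|\cdot\|$-estimates come from positivity of $\begin{pmatrix}1&z\\z^*&1\end{pmatrix}$-type matrices). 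Contractivity follows from $\psi$ being u.c.p. and $\|\phi_0(\cdot)\|$ being dominated by the same Gram computation. $M$-bimodularity is clear from the formula, since $\iota^\sharp(M)$ commutes with $q_\KKK^\perp$.

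For the "in particular" part, we first check the containment $\iota^\sharp(M)q\tilde{\S}(N,L^2M)q\iota^\sharp(M)q_\KKK^\perp\subset\tilde{\S}(M)$. This follows from Lemma~\ref{lemma for S(N)} and the fact that $\tilde{\S}(M)q_\KKK^\perp=\iota(JMJ)'q_\KKK^\perp$ is an $\iota^\sharp(M)$-bimodule, since $M$ is in the multiplicative domain. Next, the range is cut by $q\le q_{\XX_N}$, because $q_{\XX_N}$ dominates $\iota^\sharp(JuJe_NJu^*J)$ and commutes with $\iota^\sharp(M)$. For the extension, the restricted dual $\phi_0^*:\tilde{\S}(M)_*q_{\XX_N}q_\KKK^\perp\to (Me_N\B(L^2N)e_NM)^*$ has a Banach adjoint that is the unique normal extension to the bidual; it is c.p. and $M$-bimodular by weak$^*$ continuity. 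Unitality comes from $\phi_0(ue_Nu^*)=\iota^\sharp(u)q\,\iota^\sharp(u^*)q_\KKK^\perp$ (using $\psi(1)=1$), whose supremum over $u$ is $q_{JMJ e_N JMJ}$-type projection. We then need $\vee_u \iota^\sharp(u)q\iota^\sharp(u^*)q_\KKK^\perp=q_{\XX_N}q_\KKK^\perp$, which follows from the remark on $q_{\XX_N}$. We expect the main obstacle to be this positivity/well-definedness via the $N$-valued Gram matrix identification.
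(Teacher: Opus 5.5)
Your proposal is correct and is essentially the paper's argument. In both, Lemma~\ref{lem: basic construction in bidual} makes the $N$-valued Gram matrices of $(x_ie_N)_i$ and $(\iota^{\sharp}(x_i)qq_{\KKK}^{\perp})_i$ coincide, so after polar decomposition everything reduces to the $N$-bimodular u.c.p.\ map $\psi^{(k)}$ applied to $G^{1/2}[T_{ij}]G^{1/2}$ (taking $C=G^{1/2}$ suffices; no Pimsner--Popa basis is needed), and your unitality computation is the same as the paper's. The only difference is the order of deductions: the paper uses this reduction to prove complete contractivity and then gets complete positivity from unitality of the normal extension (a unital complete contraction is c.p.), whereas you transport positivity directly, which has the small bonus that unitality then follows at once from $\phi(\iota(ue_Nu^*))\le \phi(\bar q')\le 1$ together with $\vee_u\phi(\iota(ue_Nu^*))=q_{\XX_N}q_{\KKK}^{\perp}$.
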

\begin{proof}
    For the second part of the statement, to see that $ \iota^{\sharp}(M)q \tilde{\S}(N,L^2M) q\iota^{\sharp}(M)q_{\KKK}^\perp\subset \tilde{\S}(M)q_{\XX_N}q_{\KKK}^{\perp} $, we simply observe that from Lemma \ref{lemma for S(N)}, $ q \tilde{\S}(N,L^2M) qq^{\perp}_{\KKK} $ commutes with $\iota(JMJ)$, and therefore $ \iota^{\sharp}(M)q \tilde{\S}(N,L^2M) q\iota^{\sharp}(M)q_{\KKK}^\perp $ commutes with $ \iota(JMJ) $. (Also note that $q \leq q_{\XX_N}$.)

    For the first part of the statement, we first show that $\phi_0$ is a contraction. Take an element $ \sum_{k=1}^n x_ke_N T_ke_Ny_k\in Me_N\BB(L^2N)e_NM $. We let $ X\coloneqq ( x_1e_N,\cdots,x_ne_N ) $ and $Y\coloneqq (y_1^*e_N,\cdots, y_n^*e_N)^*$. Consider the polar decomposition of $X$ and $Y$, we have the identity of norms
    \begin{align*}
        &\|\sum_{k=1}^n x_ke_N T_ke_Ny_k\| = \| X\;\text{diag}(T_1,\cdots,T_n) Y^* \|\\ =& \| (X^*X)^{1/2}\;\text{diag}(T_1,\cdots,T_n)(Y^*Y) \|_{\mathbb{M}_n(\B(L^2N))}\\
        =&\| \left[ E_N(x_i^*x_j)\right]_{ij}^{1/2} \;\text{diag}(T_1,\cdots,T_n) \left[ E_N(y_i y^*_j)\right]_{ij}^{1/2}\|_{ \mathbb{M}_n(\B(L^2N)) }.
    \end{align*}

    On the other hand, by Lemma \ref{lem: basic construction in bidual} and the same polar decomposition method, we obtain
    \begin{align*}
        &\|\sum_{k=1}^n \iota^{\sharp}(x_k)q\psi(T_k)q\iota^{\sharp}(y_k)q^{\perp}_{\KKK}\|\\ =& \| \left[ \iota^{\sharp}(E_N(x_i^*x_j))\right]_{ij}^{1/2}\;\text{diag}(q\psi(T_1)q,\cdots,q\psi(T_n)q) \left[ \iota^{\sharp}(E_N(y_iy^*_j))\right]_{ij}^{1/2}(q^{\perp}_{\KKK}\otimes 1)\|\\
        =&\| (q\otimes 1) \left[\left( \psi \otimes 1 \right)\left( \left[ E_N(x_i^*x_j)\right]_{ij}^{1/2} \;\text{diag}(T_1,\cdots,T_n) \left[ E_N(y_i y^*_j)\right]_{ij}^{1/2} \right) \right](q\otimes 1) (q^{\perp}_{\KKK}\otimes 1)\|\\
        \leq& \|\left[ E_N(x_i^*x_j)\right]_{ij}^{1/2} \;\text{diag}(T_1,\cdots,T_n) \left[ E_N(y_i y^*_j)\right]_{ij}^{1/2}\|
    \end{align*}
    where we used the fact that $\psi $ is $N$-bimodular and u.c.p. This shows that $\phi_0$ is a well-defined contraction. The exact same argument shows that $\phi_0$ is also completely contractive.

    Finally, to show that $\phi_0$ is actually completely positive, it suffices to show that its normal extension $\phi: (Me_N\B(L^2N)e_NM)^{**}\to \tilde{\S}(M)q_{\XX_N}q^{\perp_\KKK} $ is unital. For this, we observe that the identity in $ (Me_N\B(L^2N)e_NM)^{**} $ is $ \bar{q}':= \vee_{u\in \mathcal{U}(M)}\iota(ue_Nu^*) $. Therefore,
    \begin{align*}
        \phi( \bar{q}' ) &= \phi( \vee_{u\in \mathcal{U}(M)}\iota(ue_Nu^*) ) = \vee_{u\in\mathcal{U}(M)}\phi_0( ue_Nu^{*} )q^{\perp}_{\KKK}\\ &= q^{\perp}_{\KKK}\vee_{u\in \mathcal{U}(M)}\iota^{\sharp}(u)\psi(1_{L^2N})\iota^{\sharp}(u^*) =  q^{\perp}_{\KKK}\vee_{u\in \mathcal{U}(M)}\iota^{\sharp}(u)q\iota^{\sharp}(u^*)\\
        &=q^{\perp}_{\KKK}\vee_{u\in \mathcal{U}(M)} \iota^{\sharp}(u)\left(\vee_{v\in \mathcal{U}(M)} \iota^{\sharp}(JvJe_N Jv^*J)\right)\iota^{\sharp}(u^*) = q^{\perp}_{\KKK}q_{\XX_N},
    \end{align*} 
which is indeed the unit of $ \tilde{\S}(M)q_{\XX_N}q_{\KKK}^{\perp} $.   
\end{proof}

\subsection{Proof of upgrading biexactness}
\begin{corollary}\label{cor: biexact mixing subalg}
    If $M$ is weakly exact, and $N\subset M$ is a biexact mixing subalgebra with expectation, then the following map is weak${}^{*}$-nuclear:
    $$  \text{Ad}(q_{\XX_{{N}}}q_{\KKK}^{\perp})\circ \iota^{\sharp}|_M :M\to \tilde{\S}(M)q_{\XX_{{N}}}q_{\KKK}^{\perp}. $$
\end{corollary}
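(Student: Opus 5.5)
The plan is to factor the map $\text{Ad}(q_{\XX_N}q_{\KKK}^{\perp})\circ\iota^{\sharp}|_M$ through the bidual $(Me_N\B(L^2N)e_NM)^{**}$, using the normal u.c.p. map $\phi$ of Lemma \ref{key lemma}. First identify the compression with a composition: for $x\in M$ we want
\[ q_{\XX_N}q_{\KKK}^{\perp}\iota^{\sharp}(x)q_{\XX_N}q_{\KKK}^{\perp} = \phi(\bar{q}'\iota(x)\bar{q}') , \]
where $\bar{q}'=\vee_{u}\iota(ue_Nu^*)$ is the unit of $(Me_N\B(L^2N)e_NM)^{**}$. Since $\phi$ is $M$-bimodular and unital, we have $\phi(\bar{q}'\iota(x)\bar{q}')=\phi(\iota(x)\bar{q}')=\iota^{\sharp}(x)\phi(\bar{q}')=\iota^{\sharp}(x)q_{\XX_N}q_{\KKK}^{\perp}$. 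Here $\iota(M)$ commutes with $\bar{q}'$, and $q_{\XX_N},q_{\KKK}$ commute with $\iota^{\sharp}(M)$ because $M$ lies in the multiplier of both boundary pieces. This settles the identification. So it suffices to show that the map
\[ \theta: M\to (Me_N\B(L^2N)e_NM)^{**}, \qquad \theta(x)=\iota(x)\bar{q}', \]
is weak$^*$-nuclear, because composing with the normal u.c.p. map $\phi$ preserves weak$^*$-nuclearity.

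Next, identify $\bar{q}'\B(L^2M)^{**}\bar{q}'$ with $(Me_N\B(L^2M)e_NM)^{**}$. The algebra in Lemma \ref{key lemma} written with $\B(L^2N)$ should be read as $e_N\B(L^2M)e_N\cong\B(L^2N)$. In this picture $\theta$ is just the compression of $\iota|_M$ by the central-in-$\iota(M)'$-sense projection $\bar{q}'$. Weak exactness of $M$ now enters: it gives that the inclusion $M\subset \B(L^2M)$ is $M$-nuclear in the weak $M$-$M$ topology, via the characterization of weak exactness by Kirchberg/Ozawa, or equivalently by \cite{ding2023biexact}. Concretely, there are c.c.p. maps $M\to\mathbb{M}_k(\C)\to\B(L^2M)$ converging to the identity in the $M$-$M$ topology. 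Compressing by $\bar{q}'$ and passing to the bidual, these maps converge weak$^*$ in $\bar{q}'\B(L^2M)^{**}\bar{q}'$ after cutting by the normal part $p_{\nor}^M$. This requires the functionals in the predual of the relevant corner to be $M$-$M$ continuous.

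The main obstacle is exactly that continuity. The target corner $\tilde{\S}(M)q_{\XX_N}q_{\KKK}^{\perp}$ sits under $p^\sharp_{\nor}$, so the functionals we actually test against are normal on $M$ and $JMJ$. We only need convergence after applying $\phi$, so the plan is to pull back: for $\omega\in(\tilde{\S}(M)q_{\XX_N}q_{\KKK}^{\perp})_*$, the functional $\omega\circ\phi\circ\text{Ad}(\bar{q}')\circ\iota$ on $\B(L^2M)$ should be normal on $M$. This holds because $\phi$ is $M$-bimodular and $M$ acts normally on the target, since $p_{\nor}^\sharp\le p^M_{\nor}$. By Theorem \ref{Thm: weak topology} this pulled-back functional is then $M$-$M$ continuous. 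Hence $\phi\circ\text{Ad}(\bar{q}')\circ\iota\circ\varphi_k$ converges weak$^*$ to $\text{Ad}(q_{\XX_N}q_{\KKK}^{\perp})\circ\iota^{\sharp}|_M$, and each term factors through $\mathbb{M}_k(\C)$ by a c.c.p. map. If $\phi$ is only defined on the corner, I would check that $\bar{q}'\iota(\cdot)\bar{q}'$ is the right normal extension of the map from $\B(L^2M)$, using Kaplansky density for the hereditary subalgebra as in the preliminaries.
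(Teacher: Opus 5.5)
Your proposal is correct and is essentially the paper's proof. Both arguments factor through $\text{Ad}(\bar{q}')\circ\iota$ into $(Me_N\B(L^2N)e_NM)^{**}$, compose with the normal unital $M$-bimodular map $\phi$ of Lemma \ref{key lemma}, use the $M$-nuclearity of $M\subset\B(L^2M)$ coming from weak exactness, and identify the composite with $\text{Ad}(q_{\XX_N}q_{\KKK}^{\perp})\circ\iota^{\sharp}|_M$ via bimodularity and $\phi(\bar{q}')=q_{\XX_N}q_{\KKK}^{\perp}$.

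Pulling normal functionals back through $\phi$ into $\B(L^2M)^{M\sharp M}$ is a rephrasing of the paper's identity $\phi=\phi\circ\text{Ad}(p^M_{\nor})$. Your stated intermediate reduction to weak$^*$-nuclearity of $\theta$ itself is too strong: for $N=M$ it would force $M\subset\B(L^2M)$ to be nuclear. Your actual argument never uses that reduction, so the proof stands.
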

\begin{proof}
    Since $M$ is weakly exact, the embedding $ M\subset \B(L^2M) $ is $M$-nuclear (\cite[Theorem 5.1]{ding2023biexact}). Since $\bar{q}':= \vee_{u\in \mathcal{U}(M)}\iota(ue_Nu^*)$ is the identity of the bidual of the hereditary $C^*$-subalgebra $ \overline{ Me_N\B(L^2N)e_N M }\subset \B(L^2M) $, we have the identity $ \bar{q}'\B(L^2M)^{**}\bar{q}'  = (Me_N\B(L^2N)e_N M)^{**}$, which gives us the $M$-bimodular map
    $$ \text{Ad}(\bar{q}')\circ \iota: \BB(L^2M)\to (Me_N\B(L^2N)e_N M)^{**}.$$
    Now, let $\phi:  (Me_N\B(L^2N)e_N M)^{**} \to \tilde{\S}(M)q_{\XX_N}q^{\perp}_{\KKK} $ be the u.c.p normal map in Lemma \ref{key lemma}. We have the following desired composition map
    $$ M \subset \B(L^2M)\xrightarrow{ \text{Ad}(\bar{q}')\circ \iota} (Me_N\B(L^2N)e_N M)^{**}  \xrightarrow{ \phi}\tilde{\S}(M)q_{\XX_N}q^{\perp}_{\KKK}.  $$

    We claim that this composition map is weak${}^*$-nuclear.
    Indeed, recall that the inclusion $M\subset \B(L^2M)$ is $M$-nuclear, but the weak $M$-$M$ topology is given by the predual of $ p^M_{\nor}\B(L^2M)p^M_{\nor} $, therefore we have that the map
    $$ \text{Ad}(\bar{q}')\circ\text{Ad}(p^{M}_{\nor})\circ  \iota=\text{Ad}(p^{M}_{\nor})\circ \text{Ad}(\bar{q}')\circ \iota: M\to p^{M}_{\nor}(Me_N\B(L^2N)e_N M)^{**}p^{M}_{\nor} $$
    is weak${}^*$-nuclear. But since $ \phi = \phi\circ \text{Ad}(p^{M}_{\nor}) $ (as $ p^{\sharp}_{\nor}\leq p^{M}_{\nor} $), our desired composition $ \phi\circ \text{Ad}(\bar{q}')\circ \iota $ can also be written as $ \phi\circ \text{Ad}(p^{M}_{\nor})\circ \text{Ad}(\bar{q}')\circ \iota $, which is also weak${}^*$-nuclear as $\phi$ is normal.

    Finally, we claim that the composition map $\phi\circ \text{Ad}(\bar{q}')\circ \iota$ is precisely $ \text{Ad}(q_{\XX_N}q_{\KKK}^{\perp})\circ \iota^{\sharp}|_M $ when restricted to $M$, which finishes the proof. Indeed, since all the maps involved are $M$-bimodular, we only need to check it at the identity:
    $$ \phi\circ \text{Ad}(\bar{q}')\circ \iota(1) = \phi(\bar{q}') = q_{\XX_N}q_{\KKK}^{\perp}, $$
    where the last identity holds, as $\phi$ is unital as in the proof of Lemma \ref{key lemma}.
\end{proof}

\begin{thm}\label{thm: main}
    Let $M$ be a separable von Neumann algebra and $\mathcal{N}=\{N_i\}_{i\in I}$ be a family of subalgebras with expectation. Let $q_{\XX_{N_i}}\in \B(L^2M)^{\sharp *}_{J}$ be the identity of $ (\XX_{N_i})^{\sharp *}_J $. Assume that $[ q_{\XX_{N_i}},q_{\XX_{N_j}} ]=0$ for all $i,j$. If $M$ is biexact relative to $\mathcal{N}$, and each $N_i$ is mixing and biexact, then $M$ is biexact.
\end{thm}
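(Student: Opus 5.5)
By Theorem \ref{thm: bidual characterization of biexact}, it suffices to show that $\iota^{\sharp}|_M: M\to \tilde{\S}(M)$ is weak${}^*$-nuclear. Here $\tilde{\S}(M)=\iota(JMJ)'q_{\KKK}^{\perp}\oplus \K(L^2M)^{\sharp*}_J$ is a von Neumann algebra, and the plan is to cut the embedding into pieces using central-type projections that commute with $\iota^{\sharp}(M)$. Each piece will be weak${}^*$-nuclear, and we then reassemble.

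First, weak exactness. Relative biexactness of $M$ with respect to $\mathcal{N}$ forces $M$ to be weakly exact, since the inclusion $M\subset \S_{\XX_{\mathcal N}}(M)\subset\B(L^2M)$ is then $M$-nuclear; this is the standard fact from \cite{ding2023biexact}. Hence Corollary \ref{cor: biexact mixing subalg} applies to each $N_i$, and each map $\text{Ad}(q_{\XX_{N_i}}q_{\KKK}^{\perp})\circ\iota^{\sharp}|_M$ is weak${}^*$-nuclear.

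Next, the decomposition. The projection $q_{\KKK}$ is central in $\B(L^2M)^{\sharp*}_J$. The projections $q_{\XX_{N_i}}$ commute with $\iota^{\sharp}(M)$ and $\iota^{\sharp}(JMJ)$, since $M$ and $JMJ$ lie in the multiplier of $\XX_{N_i}$, and by assumption they commute with each other. Write $Q=q_{\XX_{\mathcal N}}=\vee_i q_{\XX_{N_i}}$. On the corner $Q^{\perp}$, the relative small-at-infinity boundary gives nothing beyond the absolute one. Concretely, for $T\in \tilde{\S}_{\XX_{\mathcal N}}(M)$ we have $[T,\iota^\sharp(JxJ)]\in \XX_{\mathcal N}{}^{\sharp*}_J=Q(\cdot)Q$, so $Q^{\perp}TQ^{\perp}$ commutes with $\iota(JMJ)$. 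Therefore $\text{Ad}(Q^{\perp}q_{\KKK}^{\perp})$ maps $\tilde{\S}_{\XX_{\mathcal N}}(M)$ into $\tilde{\S}(M)$. Composing the weak${}^*$-nuclear map $\iota^{\sharp}|_M: M\to\tilde{\S}_{\XX_{\mathcal N}}(M)$ (Theorem \ref{thm: bidual characterization of biexact}) with this normal u.c.p.\ compression gives weak${}^*$-nuclearity of $\text{Ad}(Q^{\perp}q_{\KKK}^\perp)\circ\iota^{\sharp}|_M$. The piece $q_{\KKK}$ is handled by weak exactness, because $\K(L^2M)^{\sharp*}_J$ receives $\text{Ad}(q_{\KKK})\circ\iota^{\sharp}$ from the $M$-nuclear inclusion $M\subset\B(L^2M)$.

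For the remaining piece $Qq_{\KKK}^{\perp}$, use commutativity to write $Q$ as a sum of orthogonal projections commuting with $\iota^\sharp(M)$. Enumerate $I$ (countable is available by separability, or pass to finite subfamilies and take limits) and set
\[
Q_1=q_{\XX_{N_1}},\qquad Q_k=q_{\XX_{N_k}}\prod_{j<k}(1-q_{\XX_{N_j}}).
\]
Each $Q_k\le q_{\XX_{N_k}}$ commutes with $\iota^\sharp(M)$, $\iota^\sharp(JMJ)$ and $q_{\KKK}$. Hence $\text{Ad}(Q_kq_{\KKK}^\perp)\circ\iota^\sharp|_M$ factors as a further normal compression of the weak${}^*$-nuclear map of Corollary \ref{cor: biexact mixing subalg}, and it lands in $\tilde{\S}(M)$ because $Q_k$ commutes with $\iota(JMJ)$.

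Finally, reassemble. Since all the cutting projections $q_{\KKK}$, $Q^{\perp}q_{\KKK}^{\perp}$, and $Q_kq_{\KKK}^{\perp}$ are mutually orthogonal, sum to $1$, and commute with $\iota^\sharp(M)$, we get $\iota^\sharp|_M=\sum \text{Ad}(P)\circ\iota^\sharp|_M$, a weak${}^*$-convergent direct sum. A countable direct sum of weak${}^*$-nuclear c.p.\ maps into orthogonal corners is weak${}^*$-nuclear: approximate finitely many summands, which together capture most of the mass in each normal functional, and sum the finite-dimensional factorizations.

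I expect the main obstacle to be this last step when $I$ is infinite, that is, making the infinite sum of nuclear approximations legitimate. A secondary difficulty is verifying carefully that $Q$ really lies in the relevant bidual so that the relative boundary compresses correctly, which uses $q_{\XX_{\mathcal N}}=\vee_iq_{\XX_{N_i}}$ from the remark after \cite[Lemma 5.3]{toyosawa2025relative}.
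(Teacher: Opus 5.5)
Your proposal is correct and follows essentially the paper's proof: the bidual characterization, the cut into the $q_{\XX_{\mathcal N}}^{\perp}$, $q_{\KKK}$ and $q_{\XX_{\mathcal N}}q_{\KKK}^{\perp}$ corners, Corollary \ref{cor: biexact mixing subalg} for each $N_i$ (you rightly make explicit the weak exactness it needs, which the paper uses tacitly), and gluing via commutativity plus a point-weak${}^*$ limit over finite subfamilies --- and your orthogonal pieces $Q_k$ glue more cleanly than the paper's inclusion--exclusion, which writes the map as a difference of weak${}^*$-nuclear maps. One small slip: separability of $M$ does not make $I$ countable, so rely on your fallback of finite subfamilies (or a transfinite well-ordering of $I$), as the paper does.
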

\begin{proof}
    We first follow the same argument as in the proof of \cite[Lemma 5.20]{toyosawa2025relative}. Since $M$ is separable and biexact relative to $\mathcal{N}$, by Theorem \ref{thm: bidual characterization of biexact}, the embedding $\iota^{\sharp}|_M: M\to \tilde{\S}_{\XX_{{\mathcal{N}}}}(M) $ is weak${}^*$-nuclear. 
    
    Composing $ \text{Ad}(q_{\mathbb{K}}) $ and $\text{Ad}(q_{\XX_{\mathcal{N}}}^{\perp})$ with this weak${}^*$-nuclear embedding, we obtain two weak${}^* $-nuclear maps $$\text{Ad}(q_{\mathbb{K}})\circ\iota^{\sharp}|_{M}: M\to \K(L^2M)^{\sharp *}_J,\quad \text{Ad}(q_{\mathbb{X}_{\mathcal{N}}}^{\perp})\circ\iota^{\sharp}|_{M}: M\to q_{\mathbb{X}_{\mathcal{N}}}^{\perp}\tilde{\mathbb{S}}_{\mathbb{X}_{\mathcal{N}}}(M)q_{\mathbb{X}_{\mathcal{N}}}^{\perp}. $$
    Note that both maps have their image contained in $ \tilde{\S}(M) $. Again by Theorem \ref{thm: bidual characterization of biexact}, to show the biexactness of $ M $, it suffices to show that the inclusion $\iota^{\sharp}|_M: M\to \tilde{\S}(M) $ is weak${}^*$ nuclear. As $ \iota^{\sharp}|_M = \text{Ad}(q_{\mathbb{X}_{\mathcal{N}}}^{\perp})\circ \iota^{\sharp}|_M+ \text{Ad}(q_{\KKK})\circ \iota^{\sharp}|_M + \text{Ad}(q_{\XX_{\mathcal{N}}}q_{\KKK}^{\perp})\circ \iota^{\sharp}|_M $ and the first two maps in the sum are weak${}^*$-nuclear, we only need to show that the third map
    $$ \text{Ad}(q_{\XX_{\mathcal{N}}}q_{\KKK}^{\perp})\circ \iota^{\sharp}|_M :M\to \tilde{\S}(M)q_{\XX_{\mathcal{N}}}q_{\KKK}^{\perp} $$
    is weak${}^*$-nuclear.

    By Corollary \ref{cor: biexact mixing subalg}, for each $i\in I$, $\text{Ad}(q_{\XX_{{N}_i}}q_{\KKK}^{\perp})\circ \iota^{\sharp}|_M :M\to \tilde{\S}(M)q_{\XX_{{N}_i}}q_{\KKK}^{\perp}$ is weak$^*$-nuclear.  Recall that $ q_{\XX_{\mathcal{N}}} = \vee_{i\in I} q_{\XX_{N_i}}  $. Since we assume $[ q_{\XX_{N_i}},q_{\XX_{N_j}} ]=0$, we can glue up the maps $\text{Ad}(q_{\XX_{{N}_i}}q_{\KKK}^{\perp})\circ \iota^{\sharp}|_M$ as follows: For different $i,j$, since $ \text{Ad}(q_{\XX_{{N}_j}}q_{\XX_{{N}_i}}q_{\KKK}^{\perp})= \text{Ad}(q_{\XX_{{N}_j}})\circ\text{Ad}(q_{\XX_{{N}_i}}q_{\KKK}^{\perp})$, we obtain the weak${}^*$-nuclear map $$\text{Ad}(q_{\XX_{{N}_j}}q_{\XX_{{N}_i}}q_{\KKK}^{\perp})\circ \iota^{\sharp}|_M: M\to \tilde{\S}(M)q_{\XX_{\mathcal{N}}}q_{\KKK}^{\perp}.$$
    But since all the involved projections commute with $M$, we have $$\text{Ad}\left((q_{\XX_{{N}_i}}\vee q_{\XX_{N_j}})q_{\KKK}^{\perp}\right) = \text{Ad}(q_{\XX_{{N}_i}}q_{\KKK}^{\perp})+\text{Ad}(q_{\XX_{{N}_j}}q_{\KKK}^{\perp})- \text{Ad}(q_{\XX_{{N}_j}}q_{\XX_{{N}_i}}q_{\KKK}^{\perp})$$ when restricted to $\iota^{\sharp}(M)$. In particular, $\text{Ad}\left((q_{\XX_{{N}_i}}\vee q_{\XX_{N_j}})q_{\KKK}^{\perp}\right)\circ \iota^{\sharp}|_{M}:M\to \tilde{\S}(M)q_{\XX_{\mathcal{N}}}q_{\KKK}^{\perp}  $ is weak${}^*$-nuclear. Inductively, we can use the same argument to replace $ q_{\XX_{{N}_i}}\vee q_{\XX_{N_j}} $ above with $ \vee_{i\in \mathcal{F}} q_{\XX_{N_{i}}}  $ for any finite subset $\mathcal{F}\subset I$. Since $ q_{\XX_{\mathcal{N}}} $ is the $\sigma$-strong limit of $ \vee_{i\in \mathcal{F}} q_{\XX_{N_{i}}} $, $  \text{Ad}(q_{\XX_{\mathcal{N}}}q_{\KKK}^{\perp})\circ \iota^{\sharp}|_M $ is the point-weak$^*$ limit of $ \text{Ad}(q_{\KKK}^{\perp}\vee_{i\in \mathcal{F}\subset I} q_{\XX_{N_{i}}})\circ \iota^{\sharp}|_M $ and therefore also weak$^*$-nuclear.
    
\end{proof}

We note that the assumption $ [q_{\XX_i},q_{\XX_j}]=0 $ is always satisfied when dealing $L\Gamma$ with subgroups von Neumann algebras $\{LH_i\}_{i\in I}$.
\begin{corollary}
    Let $G$ be a countable discrete group biexact relative to a family of subgroups $ \{H_i\}_{i\in I} $. If each $H_i$ is biexact, and almost-malnormal in $G$, then $G$ is biexact.
\end{corollary}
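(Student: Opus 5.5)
The plan is to deduce this from Theorem \ref{thm: main} applied to $M=L(G)$ and $\mathcal{N}=\{L(H_i)\}_{i\in I}$. Since $G$ is countable, $L(G)$ is separable and tracial, and each $L(H_i)\subset L(G)$ is a subalgebra with (trace-preserving) expectation. The hypotheses of Theorem \ref{thm: main} then translate as follows. First, by Theorem \ref{thm: DP 6.2}, $G$ biexact relative to $\{H_i\}$ gives $L(G)$ biexact relative to $\{L(H_i)\}$. Second, each $H_i$ is biexact, so $L(H_i)$ is biexact, which is the case of Theorem \ref{thm: DP 6.2} with the trivial family of subgroups. Third, almost malnormality of $H_i$ makes $L(H_i)\subset L(G)$ mixing. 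In the tracial setting the definition of mixing given above agrees with the usual one (by \cite{toyosawa2025relative}), and the example above together with \cite{BoutonnetCarderi2017} supplies the mixing property.

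The one hypothesis that needs a genuine argument is $[q_{\XX_{N_i}},q_{\XX_{N_j}}]=0$, where $N_i=L(H_i)$. I would prove it by working inside the abelian subalgebra $\ell^\infty(G)\subset\B(\ell^2G)$. Each Jones projection $e_{N_i}$ is the multiplication operator $1_{H_i}$, and for $g,h\in G$ we have $\lambda_g\rho_h e_{N_i}\rho_h^*\lambda_g^* = 1_{gH_ih^{-1}}$. All of these lie in $\ell^\infty(G)$ and therefore commute. By the remark above (\cite[Lemma 5.3]{toyosawa2025relative}),
$$q_{\XX_{N_i}}=\vee_{u,v\in\mathcal U(M)}\iota^\sharp(uJvJe_{N_i}Jv^*Ju^*).$$
I would first reduce this supremum to one over group unitaries $u=\lambda_g$, $JvJ=\rho_h$. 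This uses that $\bar q_{\XX_{N_i}}$ is the unit of $(\XX_{N_i})^{**}$, and that $\XX_{N_i}$ is already the hereditary $C^*$-algebra generated by $\{\lambda_g\rho_h e_{N_i}\rho_h^*\lambda_g^*\}$: its closure under $M$ and $JMJ$ multiplication is generated by these, since $\mathbb{C}G$ is norm-dense-enough through Kaplansky and the multiplier action. It follows that $\bar q_{\XX_{N_i}}=\vee_{g,h}\iota(1_{gH_ih^{-1}})$, a supremum of projections in $\iota(\ell^\infty(G))''$, which is an abelian subalgebra of $\B(\ell^2G)^{**}$. Hence $\bar q_{\XX_{N_i}}$ and $\bar q_{\XX_{N_j}}$ commute. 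Both also commute with $p^\sharp_{\nor}$, since $M$ and $JMJ$ lie in the multiplier. So $q_{\XX_{N_i}}=p^\sharp_{\nor}\bar q_{\XX_{N_i}}$ and $q_{\XX_{N_j}}$ commute.

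The main obstacle is the justification that the supremum over all unitaries may be replaced by the supremum over group elements. An alternative that avoids this reduction is to argue directly that $\bar q_{\XX_{N_i}}$ is the support of the hereditary algebra $\XX_{N_i}$. This algebra is the norm closure of $\ell^\infty$-cutdowns $\sum_k 1_{g_kH_ih_k^{-1}}\,\B(\ell^2G)\,\sum_k 1_{g_kH_ih_k^{-1}}$, so its approximate unit can be chosen inside $\ell^\infty(G)$. Since $\iota(\ell^\infty(G))$ is commutative, the weak$^*$ limits of the approximate units for $\XX_{N_i}$ and $\XX_{N_j}$ commute. With all hypotheses verified, Theorem \ref{thm: main} yields that $L(G)$ is biexact, and Theorem \ref{thm: DP 6.2} with the trivial family converts this back to biexactness of $G$.
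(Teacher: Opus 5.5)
Your overall route is the paper's. You apply Theorem \ref{thm: main} to $L(G)\supset\{L(H_i)\}$ via Theorem \ref{thm: DP 6.2} and mixing of $L(H_i)$, and you get commutation from the fact that the projections $1_{gH_ih}$ all lie in the abelian algebra $\ell^\infty(G)$. There is, however, a genuine gap in the one step you flagged: the justification is wrong, and the intermediate claim is false.

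$\XX_{N_i}$ is a \emph{norm} closure, while Kaplansky only gives strong approximation by $\mathbb{C}G$. So $\XX_{N_i}$ need not equal the hereditary algebra $\mathcal{D}_i$ generated by the $1_{gH_ih}$, and it need not have an approximate unit in $\ell^\infty(G)$. For every $x\in L(G)$ we have $xe_{N_i}\in\XX_{N_i}$. But $xe_{N_i}\in\mathcal{D}_i$ forces $\|(1-Q)xe_{N_i}\|\to0$ along finite joins $Q$ of the $1_{gH_ih}$. By almost malnormality, a set $gH_ih$ with $h\notin H_i$ meets each left coset of $H_i$ in a finite set. So this essentially demands norm convergence of the coset expansion $xe_{N_i}=\sum_k\lambda_{g_k}E_{N_i}(\lambda_{g_k}^*x)e_{N_i}$, which fails in general.

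For example, take $L(\langle a\rangle*\langle b\rangle)$ with $H_i=\langle a\rangle$ and $x=\sum_{k\geq1}\lambda_{b^k}p_k$, where the $p_k\in L(\langle a\rangle)$ are pairwise orthogonal nonzero projections. Here $x$ is bounded, since $xx^*=\sum_k\lambda_{b^k}p_k\lambda_{b^k}^*$ is a sum of free projections of total trace at most one. Because each $p_k\ell^2(\langle a\rangle)$ is infinite dimensional, one gets $\|(1-Q)xe_{N_i}\|=1$ for every such $Q$. Hereditary subalgebras correspond bijectively to open projections, so in such cases the open projection of $\mathcal{D}_i$, namely $\vee_{g,h}\iota(1_{gH_ih^{-1}})$, is strictly smaller than $\bar q_{\XX_{N_i}}$. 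Your argument therefore says nothing about whether $\bar q_{\XX_{N_i}}$ and $\bar q_{\XX_{N_j}}$ commute.

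The reduction to group elements is valid only after compressing by $p^\sharp_{\nor}$. That is the formula the paper's proof asserts without comment: $q_{\XX_{N_i}}=p^\sharp_{\nor}\bar Q_i$ with $\bar Q_i:=\vee_{g,h\in G}\iota(1_{gH_ih^{-1}})$. What makes it work is normality, not norm density.

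In a normal representation $\KK$ of $\B(L^2M)^{\sharp*}_J$, the map $\iota^\sharp$ restricts to normal representations of $M$ and $JMJ$ on $p^\sharp_{\nor}\KK$. Moreover, $\iota(e_{N_i})$ commutes with $p^\sharp_{\nor}$ by \cite[Lemma 3.8]{DKE24}. Take bounded nets $x_\alpha,y_\beta\in\mathbb{C}G$ converging $*$-strongly to $u,v\in\mathcal{U}(M)$. Then the operators $\iota^\sharp(x_\alpha Jy_\beta Je_{N_i})=\iota(x_\alpha Jy_\beta Je_{N_i})p^\sharp_{\nor}$ converge strongly to $\iota^\sharp(uJvJe_{N_i})$. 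Each of them has range under $p^\sharp_{\nor}\bar Q_i$, because $\lambda_g\rho_he_{N_i}=1_{gH_ih^{-1}}\lambda_g\rho_h$.

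Hence $\iota^\sharp(uJvJe_{N_i}Jv^*Ju^*)\leq p^\sharp_{\nor}\bar Q_i$, and with \cite[Lemma 5.3]{toyosawa2025relative} you get $q_{\XX_{N_i}}=p^\sharp_{\nor}\bar Q_i$. Now $\bar Q_i,\bar Q_j\in\iota(\ell^\infty(G))''$ commute with each other, and each commutes with $p^\sharp_{\nor}$ since $p^\sharp_{\nor}$ commutes with every $\iota(1_{gH_ih^{-1}})$. Your last step then goes through and yields $[q_{\XX_{N_i}},q_{\XX_{N_j}}]=0$.
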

\begin{proof}
    Since $H_i$ is almost-malnormal in $G$, $L(H_i)$ is mixing in $L(G)$. By Theorem \ref{thm: main} and Theorem \ref{thm: DP 6.2}, it remains to check that $ q_{L(H_i)} $ commutes with each other. For this, we note that $ q_{L(H_i)} = p^{\sharp}_{\nor}\vee_{g,g'\in H_i}\iota( J\lambda_{g'}J\lambda_ge_{L(H_i)} \lambda_g^*J\lambda_{g'}^*J) $ and $ J\lambda_{g'}J\lambda_ge_{L(H_i)} \lambda_g^*J\lambda_{g'}^*J $ is simply the projection onto $ g\ell^2(H_i)g' $. Therefore, for each $i,j$, $g,g'\in H_i$, $h,h'\in H_j$, we have $$ [ J\lambda_{g'}J\lambda_ge_{L(H_i)} \lambda_g^*J\lambda_{g'}^*J), J\lambda_{h'}J\lambda_he_{L(H_j)} \lambda_h^*J\lambda_{h'}^*J)] =0.$$ As all these projections commute with $ p^{\sharp}_{\nor} $, we obtain $[q_{L(H_i)} ,q_{L(H_j)} ]=0 $.
\end{proof}

As an application, using \cite[Proposition 4.15]{toyosawa2025relative}, we obtain a sufficient condition for the biexactness of amalgamated free product algebras.
\begin{corollary}\label{cor: biexact AFP}
    Let $M_1,M_2$ be two separable biexact tracial von Neumann algebras with a common subalgebra $B$ with expectation. If $B$ is amenable and mixing in both $M_1$ and $M_2$, then the amalgamated free product $ M=M_1\ast_BM_2 $ is biexact.
\end{corollary}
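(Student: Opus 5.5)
The plan is to apply Theorem \ref{thm: main} to the family $\mathcal{N}=\{M_1,M_2\}$, viewed as subalgebras of $M=M_1\ast_B M_2$. In the tracial setting these come with trace-preserving conditional expectations. We then have to verify four things: relative biexactness of $M$, biexactness of each $M_i$ (which is a hypothesis), mixingness of each $M_i$ in $M$, and the commutation condition $[q_{\XX_{M_1}},q_{\XX_{M_2}}]=0$.

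\textbf{Relative biexactness.} We will invoke \cite[Proposition 4.15]{toyosawa2025relative}, which we take to assert that if $M_1,M_2$ are biexact (or at least weakly exact with suitable hypotheses) and $B$ is amenable, then $M_1\ast_B M_2$ is biexact relative to $\{M_1,M_2\}$. Separability of $M$ follows from that of $M_1,M_2$.

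\textbf{Mixingness.} We must show that $M_1\subset M$ is mixing whenever $B\subset M_2$ is mixing; the case of $M_2$ is symmetric. Using the reduced-word decomposition, $L^2(M\ominus M_1)$ decomposes as an $M_1$-$M_1$ bimodule into a direct sum of pieces of the form
\[
L^2(M_1)\otimes_B \big(L^2(M_2\ominus B)\otimes_B L^2(M_1\ominus B)\otimes_B\cdots\otimes_B L^2(M_2\ominus B)\big)\otimes_B L^2(M_1).
\]
Mixingness of $B\subset M_2$ says that $L^2(M_2\ominus B)$ is a mixing $B$-bimodule. Inducing a mixing $B$-bimodule through $B\subset M_1$ should yield a mixing $M_1$-bimodule. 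This is the standard fact that mixing passes to amalgamated free products (cf.\ the tracial arguments of \cite{BoutonnetCarderi2017} and \cite[Section 5.2]{toyosawa2025relative}).

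Concretely, for $x,y\in M\ominus M_1$ we would approximate them by finite sums of reduced words and reduce $e_{M_1}xJyJe_{M_1}$ to expressions of the form $a\,e_B\,\xi\,J\eta J\,e_B\,b$ with $\xi,\eta\in M_2\ominus B$, up to coefficients in $M_1$ and $JM_1J$. By mixingness of $B\subset M_2$, $e_B\xi J\eta Je_B\in\KKK^{\infty,1}(B)$. Since $B$ is amenable, hence coarse-like, we then need to push this into $\KKK^{\infty,1}(M_1)$ on $L^2M_1$. A cleaner alternative is to test against a state $\omega$ that is normal on $M_1$ and $JM_1J$ and vanishes on the compacts, and argue as in Lemma \ref{lem: coarse implies mixing}.

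\textbf{Main obstacle.} This transfer of mixingness is the step I expect to be hardest. In particular, the passage from compactness over $B$ to the $L^\infty$-$L^1$ closure of $\K(L^2M_1)$ may need the amenability of $B$, or the result may already be recorded in \cite{toyosawa2025relative}, in which case I would simply cite it.

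\textbf{Commutation.} We need $[q_{\XX_{M_1}},q_{\XX_{M_2}}]=0$. By the remark preceding it, $q_{\XX_{M_i}}$ is the join of $\iota^\sharp(uJvJe_{M_i}Jv^*Ju^*)$ over $u,v\in\mathcal U(M)$. The analogy with the group case suggests computing the relevant projections. The subspaces $\overline{M_1JM_1J\cdot L^2(\text{reduced words})}$ are spanned by words, and the projections $uJvJe_{M_i}Jv^*Ju^*$ for $u,v$ in the word basis should commute, because these are projections onto spans of sets of reduced tensors. If that fails in general, I would first check whether the glueing argument in Theorem \ref{thm: main} only needs $q_{\XX_{M_1}}\wedge q_{\XX_{M_2}}$ to be handled. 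Alternatively, I would note that $\XX_{M_1}\cap\XX_{M_2}$ is contained in $\XX_B$, with $B$ amenable and mixing, and apply the argument of \cite{toyosawa2025relative} to that piece.
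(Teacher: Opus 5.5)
\textbf{Verdict: there is a genuine gap, in the commutation step.}

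Your handling of relative biexactness and of mixing matches the paper, which cites \cite[Proposition 4.15]{toyosawa2025relative} and \cite[Lemma 5.5]{toyosawa2025relative}. Amenability of $B$ plays no role in the mixing step.

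The gap is the commutation $[q_1,q_2]=0$, where $q_i:=q_{\XX_{M_i}}$. This is the one hypothesis of Theorem \ref{thm: main} that actually has to be proved, and none of your three suggestions proves it.

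\textbf{The word-basis idea.} This is the group argument, and it does not transfer. A general $M_1\ast_BM_2$ has no family of unitaries that permutes an orthonormal basis of $L^2M$, and the generating projections genuinely fail to commute. For instance, take $B=\C$, $w\in\mathcal U(M_1)\setminus\C$, $v\in\mathcal U(M_2)\setminus\C$ with $\tau(v)\neq0$, and $u=wv$. Then $ue_{M_1}u^*\hat 1=\overline{\tau(v)}\,\widehat{wvw^*}\notin L^2M_2$, so $[ue_{M_1}u^*,e_{M_2}]\neq0$. Now $\iota$ of $C^*(M,JMJ,e_{M_1},e_{M_2})$ commutes with $p^\sharp_{\nor}$, so $\iota^\sharp$ is an injective $*$-homomorphism there. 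Hence the images do not commute either; only the suprema can.

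\textbf{The fallbacks.} The gluing in Theorem \ref{thm: main} uses the identity $\text{Ad}((q_1\vee q_2)q_{\KKK}^\perp)=\text{Ad}(q_1q_{\KKK}^\perp)+\text{Ad}(q_2q_{\KKK}^\perp)-\text{Ad}(q_2q_1q_{\KKK}^\perp)$ on $\iota^\sharp(M)$. That identity is meaningless unless $q_1q_2$ is a projection, so working with $q_1\wedge q_2$ does not help. The ``intersection piece'' $\text{Ad}(q_2)\circ\text{Ad}(q_1q_{\KKK}^\perp)\circ\iota^\sharp|_M$ is already weak$^*$-nuclear without any input from $B$, so invoking $B$ amenable and mixing there addresses the wrong issue.

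\textbf{The missing idea.} It suffices to show $[\iota^\sharp(e_{M_1}),q_2]=0$, because $q_2$ commutes with $\iota^\sharp(M)$ and $\iota^\sharp(JMJ)$. This follows from a range inclusion (Lemma \ref{lem: checking commuting}). The range of $\iota^\sharp(e_{M_1})q_2$ is $\overline{\iota^\sharp(e_{M_1}MJMJe_{M_2})\KK}$. If it lies in $q_{\XX_B}\KK$, then $q_{\XX_B}\le q_2$ gives $\iota^\sharp(e_{M_1})q_2=q_2\iota^\sharp(e_{M_1})q_2$, which is self-adjoint.

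The inclusion is a freeness computation:
\begin{itemize}
\item By linearity and normality, take $x=ax'$ and $y=cy'$ with $a,c\in M_1$, where each of $x',y'$ is $1$ or a reduced word beginning in $M_2\ominus B$.
\item Then $e_{M_1}xJyJe_{M_2}=aJcJ\,e_{M_1}x'Jy'Je_{M_2}$.
\item For $\gamma\in M_1\ominus B$ and $\zeta\in M_2$, $\tau(\gamma^*x'\zeta y'^*)=\tau(\zeta\,E_{M_2}(y'^*\gamma^*x'))=0$, since $y'^*\gamma^*x'$ is a reduced word containing a letter from $M_1\ominus B$.
\item Hence $e_{M_1}x'Jy'Je_{M_2}=e_Bx'Jy'Je_{M_2}$, and $\overline{\iota^\sharp(e_{M_1}MJMJe_{M_2})\KK}\subset\overline{\iota^\sharp(M_1JM_1Je_B)\KK}\subset q_{\XX_B}\KK$.
\end{itemize}

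\textbf{Do not skip the peeling step.} In $L(\mathbb F_2)=L\langle a\rangle\ast L\langle b\rangle$, the elements $x=\lambda_{ab}$ and $y=\lambda_{a^{-1}b}$ lie in $M\ominus M_1$, yet $e_{M_1}xJyJe_{M_2}\delta_e=\delta_{a^2}\notin\C\delta_e$. So the identity $(e_{M_1}-e_B)(M\ominus M_1)J(M\ominus M_1)Je_{M_2}=0$, as literally written in the paper's proof, is false. It holds only after this reduction, which is how the paper's graph-product lemma proceeds.
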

\begin{proof}
    By \cite[Proposition 4.15]{toyosawa2025relative}, $M$ is biexact relative to $\{M_1,M_2\}$. Also, since $B$ is mixing in $M_i$, both $M_1$ and $M_2$ are mixing in $M$ (\cite[Lemma 5.5]{toyosawa2025relative}). It remains to check that $ q_i= q_{\XX_{M_i}} $ commute with each other. 
    
    For simplicity, denote $e_i=e_{M_i}$. Let $\K$ be a normal representation of $\B(L^2M)^{\sharp *}_J$.  By the lemma below, it suffices to show that $\iota^{\sharp}(e_1)q_2\KK =\overline{\iota^{\sharp}( e_1MJMJ e_2)\KK} \subset \overline{\iota^\sharp( MJMJe_B)\KK} = q_{\XX_B}\KK$.
    
    Note that \begin{align*}
        e_1MJMJ e_2 =& e_1(M\ominus M_1)J(M\ominus M_1)J e_2+e_1M_1J(M\ominus M_1)J e_2 + e_1MJM_1J e_2\\
        =& e_1(M\ominus M_1)J(M\ominus M_1)J e_2+M_1e_1J(M\ominus M_1)J e_2\\ &+ JM_1Je_1(M\ominus M_1)e_2+ JM_1JM_1e.
    \end{align*}
    So, it suffices to show that the range of these operators belongs to $\iota^\sharp( MJMJe)\KK$.
    
    For this, we first claim that $  (e_1-e_B)(M\ominus M_1)J(M\ominus M_1)J e_2=0$, or equivalently $e_2(M\ominus M_1)J(M\ominus M_1)J (e_1-e_B)=0$. Indeed, a vector in $(M\ominus M_1)J(M\ominus M_1)J (e_1-e_B)L^2M$ is orthogonal to $L^2M_2$. Similarly, we have $ (e_1-e_B)J(M\ominus M_1)J e_2 =(e_1-e_B)(M\ominus M_1)e_2 =0$.
    
    Therefore, we have $ \iota^{\sharp}(e_1(M\ominus M_1)J(M\ominus M_1)J e_2)\KK \subset \iota^\sharp( e_B(M\ominus M_1)J(M\ominus M_1)J e_2)\KK \subset \iota^\sharp( MJMJe_B)\KK  $. Similarly, we have $ M_1e_1J(M\ominus M_1)J e_2, JM_1Je_1(M\ominus M_1)e_2\subset \iota^\sharp( MJMJe_B)\KK $. Combining these inclusions, we obtain $ \iota^{\sharp}( e_1MJMJ e_2)\KK \subset \iota^\sharp( MJMJe_B)\KK $.
\end{proof}

\begin{lemma}\label{lem: checking commuting}
    Suppose that $ M_1,M_2 $ are two subalgebras of $M$ with expectation and with $ e_{i}= e_{M_i} $ and $ q_{i}=q_{\XX_i} $, and $ B\subset M_i$ is a subalgebra with expectation. If $$ \iota^{\sharp}(e_1)q_2\KK =\overline{\iota^{\sharp}( e_1MJMJ e_2)\KK} \subset \overline{\iota^\sharp( MJMJe_B)\KK} = q_{\XX_B}\KK,$$ then $ [q_1,q_2]=0$ and $q_1q_2=q_{\XX_B}$.
\end{lemma}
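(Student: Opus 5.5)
We work in a fixed faithful normal representation $\KK$ of $\B(L^2M)^{\sharp*}_J$ and identify projections with their ranges. The strategy is to use the formula $q_{\XX_{N}} = \vee_{u,v\in\mathcal{U}(M)}\iota^{\sharp}(uJvJe_NJv^*Ju^*)$ from the remark above. This expresses each $q_i$ and $q_{\XX_B}$ as the closed span of $\iota^\sharp(MJMJe_{M_i})\KK$, respectively $\iota^\sharp(MJMJe_B)\KK$. We also use the fact that $q_1,q_2,q_{\XX_B}$ all commute with $\iota^\sharp(M)$ and $\iota^\sharp(JMJ)$: the corresponding boundary pieces have $M$ and $JMJ$ in their multipliers, and $M$, $JMJ$ lie in the multiplicative domain of $\iota^\sharp$.

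\textbf{Step 1.} Note that $B\subset M_i$ gives $e_B\le e_{M_i}$. Hence $\XX_B\subset\XX_{M_i}$, so $q_{\XX_B}\le q_1$ and $q_{\XX_B}\le q_2$.

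\textbf{Step 2.} Upgrade the hypothesis to $q_1q_2\KK\subset q_{\XX_B}\KK$. Since $q_2$ commutes with $\iota^\sharp(M)$ and $\iota^\sharp(JMJ)$, for $u,v\in\mathcal{U}(M)$ we have
\[
\iota^\sharp(uJvJ)\,\iota^\sharp(e_1)\,q_2\KK=\iota^\sharp(uJvJ)\,\iota^\sharp(e_1)\,\iota^\sharp(Jv^*Ju^*)\,q_2\KK .
\]
The left side lies in $\iota^\sharp(uJvJ)q_{\XX_B}\KK=q_{\XX_B}\KK$ by the hypothesis. Here we need that $\iota^\sharp(uJvJe_1Jv^*Ju^*)=\iota^\sharp(uJvJ)\iota^\sharp(e_1)\iota^\sharp(Jv^*Ju^*)$, which holds by multiplicativity on $M$ and $JMJ$. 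The ranges of the projections $\iota^\sharp(uJvJe_1Jv^*Ju^*)$ generate $q_1\KK$ as a closed span. I plan to argue that $q_1$ restricted to the $q_2$-invariant picture is the supremum of these projections compressed by $q_2$. Concretely, a vector $\xi\in q_1\KK\cap (q_{\XX_B}\KK)^\perp$ that also lies in a suitable range would have to vanish.

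\textbf{Main obstacle.} The difficulty is that a supremum of projections $P_\alpha$, each satisfying $P_\alpha q_2\KK\subset q_{\XX_B}\KK$, need not satisfy $(\vee P_\alpha) q_2\KK\subset q_{\XX_B}\KK$. I would instead pass to orthogonal complements. For $\eta\in q_2\KK\ominus q_{\XX_B}\KK$, the hypothesis gives $P_\alpha\eta\in q_{\XX_B}\KK$. Also, $P_\alpha\le q_1$ and $q_{\XX_B}\le q_2$, and $q_{\XX_B}$ commutes with $P_\alpha$: indeed $\iota^\sharp(uJvJ)$ commutes with $q_{\XX_B}$, and $\iota^\sharp(e_1)q_{\XX_B}=q_{\XX_B}\iota^\sharp(e_1)$ since $e_Be_1=e_1e_B=e_B$ and $q_{\XX_B}$ is built from $\iota^\sharp(MJMJe_B)$, on which $\iota^\sharp(e_1)$ acts as the identity. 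It follows that $P_\alpha\eta=P_\alpha(1-q_{\XX_B})\eta\perp q_{\XX_B}\KK$, hence $P_\alpha\eta=0$. Therefore $q_1\eta=0$. This gives $q_1(q_2-q_{\XX_B})=0$.

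\textbf{Step 3.} Conclude. From Step 2, $q_1q_2=q_1q_{\XX_B}=q_{\XX_B}$, using $q_{\XX_B}\le q_1$ from Step 1. Taking adjoints gives $q_2q_1=q_{\XX_B}$. Hence $[q_1,q_2]=0$ and $q_1q_2=q_{\XX_B}$.
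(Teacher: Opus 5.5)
Your overall strategy works, but one justification in Step 2 is false. It is easily repaired.

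\textbf{The faulty step.} You claim $[\iota^{\sharp}(e_1),q_{\XX_B}]=0$ because ``$\iota^{\sharp}(e_1)$ acts as the identity'' on $\iota^{\sharp}(MJMJe_B)\KK$. It does not. Since $\iota(e_1)$ commutes with $p^{\sharp}_{\nor}$, one has $\iota^{\sharp}(e_1)\iota^{\sharp}(T)=\iota^{\sharp}(e_1T)$ for every $T$. For $x\in M\ominus M_1$ (with $M_1\neq M$) we get $e_1xe_B=0$, because $E_{M_1}(xb)=E_{M_1}(x)b=0$ for $b\in B$. So $\iota^{\sharp}(e_1)$ annihilates the nonzero subspace $\iota^{\sharp}(xe_B)\KK=\iota^{\sharp}(x)\iota^{\sharp}(e_B)\KK\subset q_{\XX_B}\KK$. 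The relation $e_1e_B=e_B$ only gives $\iota^{\sharp}(e_B)\le\iota^{\sharp}(e_1)$. This does not pass to $q_{\XX_B}$, because $e_1$ does not commute with $M$ or $JMJ$.

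\textbf{The repair.} The commutation you need does follow from the lemma's hypothesis. The hypothesis says $\iota^{\sharp}(e_1)q_2=q_{\XX_B}\iota^{\sharp}(e_1)q_2$. Multiply on the right by $q_{\XX_B}$ and use $q_{\XX_B}\le q_2$ from your Step 1. This gives $\iota^{\sharp}(e_1)q_{\XX_B}=q_{\XX_B}\iota^{\sharp}(e_1)q_{\XX_B}$, which is self-adjoint. Hence $\iota^{\sharp}(e_1)$ commutes with $q_{\XX_B}$, and so does every $P_\alpha=\iota^{\sharp}(uJvJ)\iota^{\sharp}(e_1)\iota^{\sharp}(uJvJ)^*$. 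With this substitution the rest of your argument goes through: the kernel argument gives $q_1(q_2-q_{\XX_B})=0$, hence $q_1q_2=q_{\XX_B}$, and commutation follows by taking adjoints.

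\textbf{Comparison with the paper.} The paper applies the same adjoint trick with $q_2$ in place of $q_{\XX_B}$, obtaining $\iota^{\sharp}(e_1)q_2=q_2\iota^{\sharp}(e_1)q_2=q_2\iota^{\sharp}(e_1)$. Thus $q_2$ commutes with every $P_\alpha$, hence with $q_1=\vee_\alpha P_\alpha$. The paper then computes $q_1q_2=\vee_\alpha P_\alpha q_2=q_{\XX_B}q_1q_{\XX_B}=q_{\XX_B}$. This removes your ``main obstacle'' directly, since a projection commuting with each $P_\alpha$ commutes with their supremum. Your route instead uses $\ker(\vee_\alpha P_\alpha)=\bigcap_\alpha\ker P_\alpha$. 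It needs commutation with $q_{\XX_B}$ rather than with $q_2$, and it gets $[q_1,q_2]=0$ for free from the self-adjointness of $q_1q_2=q_{\XX_B}$. Once repaired, the two routes are about equally short.
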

\begin{proof}
    Since $q_2$ commutes with $\iota^{\sharp}(M)$ and $\iota^{\sharp}(JMJ)$, and $q_1 = \vee_{u,v\in \mathcal{U}(M)} \iota^{\sharp}(uJvJe_1Jv^*Ju^*)$, to show $[q_1,q_2] = 0$, it suffices to check that $ [\iota^{\sharp}(e_1),q_2] = 0 $. Note that by assumption, we have $ \iota^{\sharp}(e_1)q_2 = q_{\XX_B}\iota^{\sharp}(e_1)q_2 = q_2\iota^{\sharp}(e_1)q_2 $. Taking the adjoint, we obtain $ \iota^{\sharp}(e_1)q_2=q_2\iota^{\sharp}(e_1)q_2=  \iota^{\sharp}(e_1)q_2 $.

    Note that as $ [\iota^{\sharp}(e_1),q_2] =0$, we have
    $$ q_1q_2 =q_2q_1= \vee_{u,v\in \mathcal{U}(M)} \iota^{\sharp}(uJvJe_1Jv^*Ju^*)q_2 = \vee_{u,v\in \mathcal{U}(M)} \iota^{\sharp}(uJvJ) \iota^{\sharp}(e_1)q_2\iota^{\sharp}(Jv^*Ju^*). $$
    Since $ \iota^{\sharp}(e_1)q_2 = q_{\XX_B}\iota^{\sharp}(e_1)q_2 = q_2\iota^{\sharp}(e_1) = q_2\iota^{\sharp}(e_1)q_{\XX_B}= q_{\XX_B}\iota^{\sharp}(e_1)q $, we obtain
    \begin{align*}
        q_1q_2 =& \vee_{u,v\in \mathcal{U}(M)} \iota^{\sharp}(uJvJ) q_{\XX_B}\iota^{\sharp}(e_1)q_{\XX_B}\iota^{\sharp}(Jv^*Ju^*)\\ =& q_{\XX_B}\left( \vee_{u,v\in \mathcal{U}(M)} \iota^{\sharp}(uJvJe_1Jv^*Ju^*) \right)q_{\XX_B}= q_{\XX_B}q_1q_{\XX_B}=q_{\XX_B}.
    \end{align*} 
\end{proof}

\section{Application to graph product von Neumann algebras}
We will always assume a graph $ \GG=(\VV(\GG),\EE) $ is a simple (possibly infinite) graph, i.e. it is an undirected graph without multiple edges and without edges from a vertex to itself. For two vertices $u,v\in \VV(\GG)$, we denote $ u\sim v $ if $ (u,v)\in \EE $. We say that a graph $\GG_1=(\VV(\GG_1),\EE_1)$ is a subgraph of $\GG$ if $ \VV(\GG_1)\subset \VV(\GG) $, and $ u\sim v $ in $ \GG_1 $ if and only if $ u\sim v $ in $\GG$. For each $v\in \VV(\GG)$, we consider the link subgraph $\text{link}_v$ with $ \VV(\text{link}_v) = \{u\in \VV(\GG):u\sim v\} $ and the star subgraph $ \text{star}_v $ with $ \VV(\text{star}_v) = \{v\}\cup\{u\in \VV(\GG):u\sim v\} $. A complete subgraph of $\GG$ is called a clique.

Let $ \{( M_v,\varphi_v ):v\in \VV(\GG)\} $ be a family of von Neumann algebras with faithful normal states. Consider the graph product von Neumann algebra $(M,\varphi)= \ast_{v\in \GG} (M_v,\varphi_v)$. The $L^2$ space of $(M,\varphi)$ can be written the direct sum of Hilbert subspaces $ \HH_{v_1\cdots v_n}= \H_{v_1}^{o}\otimes \cdots \otimes \H^o_{v_n} $ where $ v_1\cdots v_n $ is a reduced word in $ \GG $ and $ \H^o_{v_n}:=L^2( M_v,\varphi_v )\ominus \C\hat{1} $. See, for example, \cite{charlesworth2025structure} for a detailed definition of graph product von Neumann algebras.

\subsection{Subgraph boundary piece of graph products}

In this subsection, we check that the commuting assumption $ [q_{\XX_{N_i}},q_{\XX_{N_i}}]=0 $ is satisfied for graph product subalgebras of subgraphs.

Let $ \GG_1 $ and $\GG_2$ be two subgraphs, and $M_i:= \ast_{v\in \GG_i}(M_v,\varphi_v)$, considered as subalgebras of $M$. Let also $N = M_1\cap M_2 = \ast_{v\in \GG_1\cap \GG_2}(M_v,\varphi_v)$ be the graph product over the intersection subgraph $\GG_1\cap \GG_2$. 

Let $e_i$, $i=1,2$ be the projection onto the subspace $L^2M_i \subset L^2M$. And $e$ be the projection onto $L^2N$. Let also $ q_{i}=q_{\XX_i} $ and $q=q_{\XX_N}$. We can use the same argument as in the proof of Corollary \ref{cor: biexact AFP} to show that $ [q_1,q_2]=0 $ and $q_1q_2=q$.

\begin{lemma}
    For the subgraph product algebras $M_1$ and $M_2$, we have $ [q_1,q_2]=0 $ and $q_1 q_2 =q$.
\end{lemma}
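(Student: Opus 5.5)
We reduce to Lemma \ref{lem: checking commuting}, applied with $M_1,M_2$ the subgraph product algebras and $B=N=M_1\cap M_2$. It therefore suffices to show
\[ \iota^{\sharp}(e_1)q_2\KK=\overline{\iota^{\sharp}(e_1MJMJe_2)\KK}\subset \overline{\iota^{\sharp}(MJMJe)\KK}=q\KK . \]
The first equality holds because $q_2=\vee_{u,v}\iota^\sharp(uJvJe_2Jv^*Ju^*)$ and $M,JMJ$ lie in the multiplicative domain of $\iota^\sharp$, exactly as in Corollary \ref{cor: biexact AFP}. So everything comes down to describing the operators $e_1 xJyJ e_2$ for $x,y\in M$. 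By density and linearity we may take $x,y$ to be reduced words, that is, $x=a_1\cdots a_n$ and $y=b_1\cdots b_m$ with $a_k\in M_{v_k}^{o}$, $b_k\in M_{w_k}^o$ (centered elements).

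The key combinatorial step is a normal-form computation on $L^2M=\bigoplus \HH_{u_1\cdots u_r}$. We want to show that $e_1xJyJe_2$ factors as $x'Jy'J\,e\,T$ (or $T e\,x'Jy'J$ after taking adjoints), with $x',y'\in M$ and $T$ bounded. The intended mechanism is the following. The range of $e_2$ consists of tensors of letters from $\GG_2$. Left multiplication by $x$ and right multiplication by $y$ either append letters, cancel and absorb letters, or commute past letters according to the graph product rules. After this, $e_1$ keeps only those vectors all of whose letters lie in $\GG_1$. Any letter of the original $\xi\in L^2M_2$ that survives into the range of $e_1$ must therefore lie in $\GG_1\cap\GG_2$. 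Every other letter of $\xi$ must have been absorbed by some letter of $x$ (from the left) or of $y$ (from the right). Splitting $x$ and $y$ into the part that "eats" $\xi$ and the rest, we get $e_1xJyJe_2=\sum x'Jy'J\,e\,x''Jy''J\,e_2$. This is the analogue of the vanishing identity $(e_1-e_B)(M\ominus M_1)J(M\ominus M_1)Je_2=0$ used in the free product case.

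A cleaner route to the same conclusion is to decompose $M\ominus N$ and to prove
\[ (e_1-e)\,xJyJ\,e_2\in \overline{\operatorname{span}}\{x'Jy'J\,e\,z\} \]
by induction on the total length $n+m$. Each step would peel off the first letter of $x$ or the last letter of $y$, using that $e_1,e_2,e$ commute with each other: they are projections onto spans of subfamilies of the orthonormal decomposition into $\HH_{u_1\cdots u_r}$. That commutation is what makes the graph product case manageable.

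The main obstacle is the bookkeeping in the normal form. Letters of $x$ may commute past one another because of the edges of $\GG$. The product of two letters from the same vertex produces both a scalar part and a centered part. Left and right actions can also interact on short words. I would handle this by working with a fixed reduced word and applying the standard description of the left (and, via $J$, the right) action of a centered letter $a\in M_v^o$ on $\HH_{u_1\cdots u_r}$. There are two cases. If $v$ can be shuffled to the front of the word, $a$ either creates a new letter or acts on the existing $v$-letter. Otherwise it simply prepends. With this description, the surviving-letter argument above becomes a statement about reduced words, and the inclusion of ranges follows. Lemma \ref{lem: checking commuting} then yields $[q_1,q_2]=0$ and $q_1q_2=q$.
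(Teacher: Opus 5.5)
Your reduction to Lemma \ref{lem: checking commuting} with $B=N$ is the paper's, and so is the target: show $\iota^{\sharp}(e_1xJyJe_2)\KK\subset q\KK$ for reduced words $x,y$ by factoring through $e$. The gap is that the decisive combinatorial step is never proved, and the mechanism you propose would not produce the factorization.

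\emph{The eating heuristic picks the wrong splitting.} Take $M=M_u\ast M_w$ with $\GG_1=\{u\}$, $\GG_2=\{w\}$, so $N=\C$. Let $x=ab$ with $a\in M_w^{o}$ and $b\in M_u^{o}$. On $L^2M_w$, $b$ prepends a $u$-letter and $a$ then prepends a $w$-letter. Nothing of $\xi$ is absorbed, and $e_1xe_2=0$. Your recipe, however, gives $x'=ab$, $x''=1$, hence $x'ex''e_2=xe\neq 0$. What decides the correct left factor is not absorption. It is which letters of $x$ and $y$ lie in $\GG_1$ and can be moved next to $e_1$.

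\emph{The induction route lacks its base case.} You can peel a letter off $x$ (or $JyJ$) adjacent to $e_1$ only when it lies in $M_1$ (resp.\ $JM_1J$). Then it commutes with $e_1$, and $\iota^{\sharp}$ of it preserves $q\KK$ because $q$ commutes with $\iota^\sharp(M)$ and $\iota^\sharp(JMJ)$. The mutual commutation of $e_1,e_2,e$ plays no role here. When no reduced form of $x$ or $y$ begins with a $\GG_1$-letter, nothing can be peeled, and your argument supplies nothing for this case.

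\emph{The missing ingredient is the vanishing identity.} If no reduced form of $x$ or of $y$ begins with a letter from a vertex of $\GG_1$, then
\[
(e_1-e)\,xJyJ\,e_2=0 .
\]
The paper proves the adjoint statement $e_2Jy^*Jx^*(e_1-e)=0$. The range $(e_1-e)L^2M$ is spanned by $\xi=\xi_1\otimes\cdots\otimes\xi_n\in\HH_{v_1\cdots v_n}$ with all $v_i\in\GG_1$ and some $v_{i_0}\notin\GG_2$. In $Jy^*Jx^*\xi$ the letter $\xi_{i_0}$ could only be altered by a letter of $x$ or $y$ from the vertex $v_{i_0}\in\GG_1$ that can be moved to the front of $x$ (resp.\ $y$), and this is excluded. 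So every reduced word in $Jy^*Jx^*\xi$ keeps $\xi_{i_0}$ and is orthogonal to $L^2M_2$.

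Combine this with peeling off the $\GG_1$-prefixes $x',y'$, the part of your induction that does work. This gives $e_1xJyJe_2=x'Jy'J\,e\,x''Jy''J\,e_2$. Hence
\[
\iota^{\sharp}(e_1xJyJe_2)\KK=\iota^\sharp(x'Jy'J)\,\iota^\sharp(e\,x''Jy''J\,e_2)\KK\subset \iota^\sharp(x'Jy'J)\,\iota^\sharp(e)\KK\subset q\KK,
\]
using that $\iota(e)$ commutes with $p^\sharp_{\nor}$. Here $x'',y''$ are not the parts that eat $\xi$. They are what remains after removing the $\GG_1$-prefix, and the point is that whatever they send into $L^2M_1$ already lies in $L^2N$.
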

\begin{proof}
     Let $ \KK $ be a normal representation of $\BB(L^2M)^{\sharp*}_J$ so that $ \BB(L^2M)^{\sharp*}_J\subset \B(\KK) $. By Lemma \ref{lem: checking commuting}, we need to show that $$\iota^{\sharp}(e_1)q_2\KK =\overline{\iota^{\sharp}( e_1MJMJ e_2)\KK} \subset \overline{\iota^\sharp( MJMJe)\KK} = q\KK.$$    

    For this, it suffices to show that $ \iota^{\sharp}(e_1xJyJe_2)\KK\subset q\KK $ for $x,y$ being reduced words in $ M^o_{v} $'s with $v\in \GG$. But notice that if $x$ has reduced form $ x=x_1\cdots x_n $ begining with $x_1\in M^o_{v}$ with $v\in \GG_1$, then $ \iota^{\sharp}(e_1xJyJe_2)\KK = \iota^{\sharp}(x_1)\iota^{\sharp}(e_1x_2\cdots x_n JyJe_2)\K $. Since $ \iota^{\sharp}(x_1)q\KK \subset q\KK$, it reduced to the case when $x$ does not begin with a element in $ M^o_{v} $ with $v\in \GG_1$. Repeat the same reduction for $JyJ$, we may without loss of generality assume that the reduced words $x$ does not have a reduced form begining with $x_1\in M^o_v$, $v\in \GG_1$, and that $ JyJ $ does not have a reduced form begining with $Jy_1J \in JM^0_vJ$, $v\in \GG_1$.

     Now with this additional assumption, we claim that $ (e_1-e)xJyJe_2=0 $. Indeed, it suffices to show $ e_2Jy^*Jx^*(e_1-e)=0 $, or equivalently $ e_2Jy^*Jx^*\xi=0 $ for $\xi \in \HH_{v_1\cdots v_n} $ with $v_1\cdots v_n$ reduced, $v_i\in \GG_1$ for all $i$, and $v_{i_0}\notin \GG_1\cap \GG_2$ for some $i_0$. For simplicity, write $ \xi=\xi_1\otimes\cdots \otimes\xi_n $ with $\xi_i\in L^2M^o_{v_i}$, and write $ Jy^*Jx^*\xi $ as a linear combination of reduced words. We now notice that our assumption on $ x $ and $JyJ$ imply that the element $\xi_{i_0}$ is not canceled in every reduced word in $ Jy^*Jx^*\xi $. Therefore, $  Jy^*Jx^*\xi $ is orthogonal to $ L^2M_2 $. In particular, this implies $ e_2Jy^*Jx^*\xi=0 $.

     Finally, $  \iota^{\sharp}(e_1xJyJe_2)\KK = \iota^{\sharp}((e_1-e)xJyJe_2)\KK+ \iota^{\sharp}(exJyJe_2)\KK=\iota^{\sharp}(exJyJe_2)\KK\subset q\KK$.
\end{proof}

\subsection{Relative biexactness of graph products}
In \cite{hoshino2026relative}, it is proven that the graph product of biexact groups is biexact relative to the family of link graph products. We generalize this to von Neumann algebras. 
\begin{thm}\label{thm: relative biexact graph product}
     If $\GG$ is a graph and $M_v$ is biexact for each $v\in \mathcal{V}(\GG)$, then the graph product von Neumann algebra $ M=\ast_{v\in \GG}(M,\varphi_v)$ is biexact relative to the family $ \{ M_{\text{link}_v} \}_{v\in \VV}$, where $  M_{\text{link}_v} = \ast_{v\in \text{link}_v} (M_v,\varphi_v) $.
\end{thm}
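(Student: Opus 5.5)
We will follow Hoshino's strategy for groups, transported to the bidual framework of Theorem \ref{thm: bidual characterization of biexact}. The aim is to show that the inclusion $\iota^{\sharp}|_M : M \to \tilde{\S}_{\XX}(M)$ is weak${}^*$-nuclear, where $\XX = \XX_{\{M_{\text{link}_v}\}}$. Since the statement does not assume separability, we will either work directly with $M$-nuclearity of $M \subset \S_{\XX}(M)$, or first reduce to countable subgraphs and separable vertex algebras by an inductive-limit argument.

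The key structural input is an amalgamated free product decomposition. For each vertex $v$ we have
\[
M = M_{\text{star}_v} \ast_{M_{\text{link}_v}} M_{\GG\setminus\{v\}},
\qquad
M_{\text{star}_v} = M_v \bar{\otimes} M_{\text{link}_v}.
\]
Ideally we would show that $M$ is biexact relative to the family $\{M_{\text{star}_v}\}_v$, and then remove the $M_v$ factor. The removal uses the biexactness of $M_v$: a tensor-product / relative-biexactness permanence shows that $M_v \bar{\otimes} M_{\text{link}_v}$ is biexact relative to $\C \bar{\otimes} M_{\text{link}_v}$.

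To build the nuclear maps concretely, we work on $L^2M = \bigoplus \HH_{v_1\cdots v_n}$. For each vertex $v$, consider the operators that act as $x \in M_v$ on the $\HH^o_v$ letter that can be brought to the front of a reduced word. This gives a ``first letter'' representation
\[
L^2M \cong \HH_{v}\text{-part} \oplus \text{rest},
\qquad
\HH_v\text{-part} \cong L^2M_v \otimes \K_v,
\]
which is analogous to the group case, where $\Gamma$ acts on $\Gamma$ via normal forms. The biexactness of $M_v$ gives c.c.p. maps $M_v \to \S(M_v, L^2M_v)$ that factor through matrix algebras. These are tensored with the identity on $\K_v$ and on the commuting link part, producing approximately right-commuting maps. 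Here we will use Lemma \ref{expectation from BH to S} to obtain $M_v$-bimodular u.c.p. maps $\psi_v : \B(L^2M_v) \to \tilde{\S}(M_v, L^2M)$.

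We then glue the vertex pieces using a partition of unity on reduced words indexed by the last letter type. Concretely, we take the projections $P_v$ onto words ending in $v$. These $P_v$ commute with $JMJ$ modulo operators supported on words whose ending lies in $\text{star}_v$. This is precisely where the boundary piece $\XX_{M_{\text{link}_v}}$ enters: a commutator $[P_v, JyJ]$ lives on $L^2$ of words ending in letters from $\text{link}_v$ after removing $v$, so it belongs to $\overline{MJMJ\, e_{M_{\text{link}_v}}\, \B\, e_{M_{\text{link}_v}} JMJM}$. As in Theorem \ref{thm: main}, the resulting maps are then combined over finite subsets of vertices. Amenability of the map $M \to \B(L^2M)$ is provided by weak exactness: graph products of biexact, hence exact, algebras are exact.

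The main obstacle will be this last commutator computation. It requires showing that $[T, JyJ]$ lies in $\KKK^{\infty,1}_{\XX}(M)$ for the assembled operators $T$, including the error coming from the vertex-level small-at-infinity maps. These maps only commute with $JM_vJ$ up to the $M_v$-$M_v$ and $M_v'$-$M_v'$ closure of compacts on $L^2M$, not the $M$-$M$ closure. The first thing to try is to handle this exactly as in Lemma \ref{lemma for N' topology}: cut by the projection onto words whose last letter is in $\text{star}_v$, and show that the two topologies agree modulo $\XX_{M_{\text{link}_v}}$ there.
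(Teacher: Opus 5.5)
\textbf{Verdict: there is a genuine gap.} Your plan never produces c.c.p.\ maps defined on all of $M$ (or on a weakly dense $C^*$-subalgebra) that factor through nuclear algebras and take values in $\S_{\XX}(M)$.

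\emph{The vertex maps do not extend.} The maps $\theta^v_k$, or $\psi_v$ from Lemma \ref{expectation from BH to S}, live on $M_v$ or $\B(L^2M_v)$. Tensoring with $1_{\K_v}$ only handles $M_v$ itself: for $u\notin\text{star}_v$, $M_u$ does not act in tensor form on $L^2M\cong L^2M_v\otimes\K_v$.

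\emph{The gluing of Theorem \ref{thm: main} is not available.} There the pieces are cut by $q_{\XX_{N_i}}$ and $q_{\KKK}$, which commute with both $\iota^{\sharp}(M)$ and $\iota^{\sharp}(JMJ)$. The passage from an $N$-bimodular $\psi$ to an $M$-bimodular map in Lemma \ref{key lemma} also runs through mixing, via Lemma \ref{lem: basic construction in bidual}. Here $M_v\subset M$ is in general not mixing, since $M_{\text{link}_v}$ commutes with it. Your word projections commute with at most one side:
\begin{itemize}
\item Projections onto words ending in $v$ do not commute with $JMJ$ modulo $\XX$. If $a\in M_u$, $\varphi_u(a)=0$, $u\notin\text{star}_v$, and $\xi$ lies in a word whose only last letter is $v$, then $[P_v,JaJ]\xi=-\xi\otimes JaJ\hat{1}$. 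This is not localized near $L^2M_{\text{star}_v}$. For $L(\Gamma_1*\Gamma_2)$, where $\XX=\K(L^2M)$, this commutator is a right translation times multiplication by a function in $\ell^\infty(\Gamma)\setminus c_0(\Gamma)$, so it is not in $\KKK^{\infty,1}(L\Gamma)$.
\item Projections onto words starting with $v$ do commute with $JMJ$ modulo $\XX_{M_{\text{link}_v}}$, but not with $M$.
\item In a graph product neither family is a partition of unity, because the possible first (or last) letters of a word form a clique.
\end{itemize}

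\emph{The ``remove the $M_v$ factor'' step fails.} It is a transitivity property of relative biexactness, which is false in general. For example, $L(\mathbb{F}_2\times\mathbb{F}_2)$ is biexact relative to its two factors, each of which is biexact, yet it is not biexact. Theorem \ref{thm: main} is a transitivity result of exactly this kind, but it needs mixing hypotheses you do not have here.

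\textbf{The missing idea} is to build the global maps as graph products of the vertex maps.
\begin{itemize}
\item By \cite[Lemma 5.9]{ding2023biexact}, take state-preserving factorizations $M_v\to(\M_{n(k,v)}(\C),\mu^v_k)\to\S(M_v)$ with $\mu^v_k$ \emph{pure} and errors as in \eqref{eq 1}.
\item Atkinson's graph products \cite{atkinson2019graph} then give $\phi_k:M_0\to B_k$ and $\psi_k:B_k\to S$. Here $M_0$ is the reduced $C^*$-graph product and $S$ is the graph-product operator system of the $\S(M_v)$.
\item $B_k$ is nuclear precisely because the $\mu^v_k$ are pure \cite[Proposition 3.1]{hoshino2026relative}. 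Convergence on reduced words follows from \eqref{eq 1}.
\end{itemize}

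The commutator estimate you flag as the main obstacle then becomes elementary. The point is to use $\S(M_v)\subset\B(L^2M_v)$ in its standard representation, embedded via the first-letter decomposition, rather than $\tilde{\S}(M_v,L^2M)$. For $T\in\S(M_v)$ and $a\in M_u$, $[T,JaJ]=0$ unless $u=v$. In that case it vanishes off $L^2M_{\text{star}_v}=L^2M_v\otimes L^2M_{\text{link}_v}$ and equals $[T,J_vaJ_v]\otimes 1$ there, with $J_v=J|_{L^2M_v}$. Hence it lies in $\KKK^{\infty,1}_{\XX}(M)$, and no comparison of topologies as in Lemma \ref{lemma for N' topology} is needed.

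Finally, \cite[Corollary 4.9]{ding2023biexact} passes from $M_0$ to $M$. So the paper's route uses neither separability nor weak exactness of $M$.
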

\begin{proof}
    We follow the proof of \cite[Theorem 5.10]{ding2023biexact} and the proof of \cite[Theorem A]{hoshino2026relative}.

    For each $v$, since $M_v$ is biexact, the embedding $ M_v\to \S(M_v)\subset \B(L^2M_v) $ is $M_v$-nuclear for each $v$. By \cite[Lemma 5.9]{ding2023biexact}, there exists two nets of state preserving u.c.p. maps $\phi^v_{k}:(M_v,\varphi_v)\to (\M_{n(k,v)}(\C),\mu_{k}^v)$, $ \psi^v_k:(\M_{n(k,v)}(\C),\mu_{k}^v)\to \S(M_v)  $ such that the composition $ \theta^v_k:= \psi^v_k\circ \phi^v_{k} $ satisfies that for each $x\in M_v$, $ \theta^v_k(x)-x $ can be written as
    \begin{equation}\label{eq 1}
        \theta^v_k(x)-x= T_k^v+x_k^v,
    \end{equation}  
    with $x_k^v\in M_v$, $ \varphi_v((x_k^v)^*x_k^v)\to 0 $, $T_k^v\in \S(M_v)$, $\|T_k^v\|\to 0$, and $ \mu_k^v $ is a pure state on $ \M_{n(k,v)}(\C) $.

    Recall that $ \S(M_v) \subset \B(L^2M_v)$, we abuse the notation and also denote by $\varphi_v$ the vector state given by $ \bar{1}_{\varphi_v}\in L^2(M_v,\varphi_v) $ on $ \B(L^2M_v) $. Consider the reduced graph product $S$ of the operator systems $(\S(M_v),\varphi_v)$, i.e. $S$ is the operator system contained in the graph product of $(\B(L^2M_v),\varphi_v)$ generated by elements whose reduced words only consists of operators from $ \S(M_v)$. Consider also $M_0$ the reduced $C^*$-graph product of $(M_v,\varphi_v)$. By \cite{atkinson2019graph}, there exists two nets of u.c.p. maps $ \phi_k: M_0 \to B_k $ and $\psi_k: B_k\to S$, which are the reduced graph products of $ \phi_k^v $ and $\psi^v_k$. Here $B_k$ is the reduced $C^*$ graph product of $(\M_{n(k,v)}(\C),\mu_{k}^v)$.

    Let $ \theta_k:= \psi_k\circ\phi_k$. We claim that for any reduced word $ x=a_1a_2\cdots a_n \in M_0 $ with $a_i\in M_{v_i}$ and $\varphi_{v_i}(a_i)=0$, $\theta_k(x)\to x$ in the weak ($M_0\subset M$)-topology (\cite[Section 3]{ding2023biexact}). Indeed, note that $ \theta_k(x)-x $ is spanned by terms of the form $ \theta_k^{v_1}(a_1)\cdots \theta_k^{v_{m-1}}(a_{m-1})\left(\theta_k^{v_m}(a_m)-a_m \right)a_{m+1}\cdots a_n $. But for any state $ \varphi\in S^{(M_0\subset M)\sharp (M_0\subset M)} $, we have by equation \eqref{eq 1},
    $$ \varphi\left( \theta_k^{v_1}(a_1)\cdots \theta_k^{v_{m-1}}(a_{m-1})\left(\theta_k^{v_m}(a_m)-a_m \right)a_{m+1}\cdots a_n \right) \to 0,$$
    which implies that $\theta_k(x)\to x$ in the weak ($M_0\subset M$)-topology. Finally, since each $\theta_k: M_0\to S$ factors through the nuclear algebra $B_k $ \cite[Proposition 3.1]{hoshino2026relative} (since it is the graph products of matrix algebras with pure states), the embedding $ M_0\subset S $ is $(M_0\subset M)$-nuclear.

    Next, we claim that the reduced graph product $S$ is contained in the small-at-infinity $\S_{\XX}(M)$ with $\XX$ the boundary piece generated by the family of subalgebras $ \{M_{\text{link}_v} \}_{v\in \VV}$.

    Indeed, by \cite[Lemma 6.1]{DKEP23} and \cite[pg. 13]{ding2023biexact}, we only need to check this on generators of $M$: Let $ T\in \S(M_v) $, $a\in M_u$. Taking an element $\xi = \xi_1\otimes \cdots \otimes \xi_n \subset L^2M_{w_1}\otimes L^2M_{w_2}^o\otimes \cdots \otimes L^2M_{w_n}^o$ for an irreducible word $ w_1\cdots w_n $, we can check that $[T,JaJ]\xi  $ is nonzero only when $ v=u=w_1 $ and $w_2,\cdots,w_n\in \text{link}(w_1)$. Therefore, we have 
    \[[T,JaJ]\xi = \begin{cases}
        ([T,JaJ]\xi_1)\otimes \xi_2\otimes\cdots \otimes \xi_n, \quad &v=u=w_1,w_2,\cdots,\quad w_n\in \text{link}(w_1);\\
        0,\quad &\text{otherwise}.
    \end{cases} \]
    In particular, $[T,JaJ]\in p\KKK^{\infty,1} (M_{v})\otimes \B(L^2 M_{\text{link}_v} )p\subset \KKK^{\infty,1}_{\XX}(M)$, where $p$ is the projection onto $L^2M_{\text{star}_v}$.
    
    Finally, since $M_0$ is ultraweakly dense in $M$, and the embedding $M_0\subset S\subset  \S_{\XX}(M)$ is $(M_0\subset M)$-nuclear, by \cite[Corollary 4.9]{ding2023biexact}, the embedding $ M\subset \S_{\XX}(M) $ is $M$-nuclear.
\end{proof}

\begin{remark}
    Note that if we only assume that each $M_v$ is weakly exact, then the same proof shows that $ M=\ast_{v\in \GG}(M_v,\varphi_v) $ is biexact relative to the family $\{ M_{\text{star}_v} \}_{v\in \VV}$.
\end{remark}

\begin{thm}
    Let $\GG$ be a simple graph without an infinite clique. The following are equivalent:
    \begin{enumerate}
        \item For any family of finite dimensional von Neumann algebras with faithful normal states $\{(M_v,\varphi_v)\}_{v\in \VV(\GG)}$ such that $M_v\neq \C$, the graph product von Neumann $M=\ast_{v\in \GG}(M_v,\varphi_v)$ is biexact.
        \item There is no square in $\GG$: there do not exist distinct vertices $v_1,v_2,u_1,u_2\in \VV(\GG)$ such that $ v_1\nsim v_2$, $u_1\nsim u_2$, and $ v_i\sim u_j $ for each $i,j=1,2$.
    \end{enumerate}
\end{thm}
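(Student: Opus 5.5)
Both directions rest on the results already in place: Theorem \ref{thm: relative biexact graph product}, Theorem \ref{thm: main}, and the bimodule computation of \cite{charlesworth2025structure}.

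\emph{(2)$\Rightarrow$(1).} I argue by induction on the size of the largest clique. This is finite: with no infinite clique and no squares, cliques should be uniformly bounded, and I would check that first; if it fails, I would instead induct on subgraphs.

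Each $M_v$ is finite dimensional, hence biexact. Theorem \ref{thm: relative biexact graph product} then says $M$ is biexact relative to $\{M_{\text{link}_v}\}_{v\in\VV}$.

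\emph{Links are biexact.} If $\text{link}_v$ contains two non-adjacent vertices $u_1,u_2$, then, since $\GG$ has no square, every pair of vertices of $\text{link}_v$ must be adjacent except possibly the pair $\{u_1,u_2\}$. Indeed, $v$ together with any two non-adjacent vertices of the link would form half of a square with another non-adjacent pair. Concretely, a square-free graph should make $\text{link}_v$ a clique joined to at most a free product piece. So $M_{\text{link}_v}$ is a graph product over a square-free graph with strictly smaller clique number, because cliques in $\text{link}_v$ extend by $v$. By the inductive hypothesis $M_{\text{link}_v}$ is biexact. In the base case the links are complete graphs, so $M_{\text{link}_v}$ is a finite tensor product of finite dimensional algebras and hence biexact.

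\emph{Links are mixing.} Here I use the bimodule computation of \cite{charlesworth2025structure}. It says $L^2(M\ominus M_{\text{link}_v})$ decomposes over reduced words, and as an $M_{\text{link}_v}$-bimodule it is coarse exactly when no vertex outside $\text{link}_v$ commutes with a vertex of infinite-dimensional type inside. The absence of squares should give this: a vertex $w\notin\text{star}_v$ adjacent to two non-adjacent vertices of $\text{link}_v$ would create a square. The remaining summands, involving $v$ itself or a word centralizing part of the link, need separate treatment: they are finite-dimensional tensored against coarse pieces. Once coarseness holds, Lemma \ref{lem: coarse implies mixing} gives mixing.

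\emph{Conclude.} The commuting condition $[q_{\XX_{M_{\text{link}_u}}},q_{\XX_{M_{\text{link}_v}}}]=0$ is supplied by the subgraph lemma above. Theorem \ref{thm: main} then yields that $M$ is biexact. The mixing step is the main obstacle. I would first try to prove directly that $e_{N}xJyJe_{N}\in\KKK^{\infty,1}(N)$ for reduced words $x,y$ outside $N=M_{\text{link}_v}$, using that the intersection $N\cap xNx^{*}$ is a finite dimensional clique algebra whenever no square exists.

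\emph{(1)$\Rightarrow$(2).} Suppose $\GG$ contains a square. Then $M$ contains
\[
(M_{v_1}\ast M_{v_2})\bar\otimes(M_{u_1}\ast M_{u_2})
\]
with expectation. The hypothesis that each $M_v$ is not $\C$, and dimension $>2$ as in the corollary, ensures both factors are diffuse and nonamenable. This tensor product is therefore not solid, so $M$ is not biexact, since biexactness passes to subalgebras with expectation and implies solidity (\cite{ding2023biexact}).
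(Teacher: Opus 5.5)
\textbf{Gap: the links are not mixing.} Your application of Theorem \ref{thm: main} to the family $\{M_{\text{link}_v}\}_v$ rests on the step ``Links are mixing'', and that step is false. The vertex algebra $M_v$ commutes with $M_{\text{link}_v}$, and $E_{M_{\text{link}_v}}$ restricts to $\varphi_v(\cdot)1$ on $M_v$. Hence $L^2(M_{\text{star}_v})\ominus L^2(M_{\text{link}_v})\cong L^2(M_{\text{link}_v})\otimes L^2(M_v)^{o}$ is a multiple of the \emph{trivial} $M_{\text{link}_v}$-bimodule, not ``finite dimensional tensored against coarse pieces''.

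A concrete counterexample: take the square-free path $u_1\sim v\sim u_2$ with all vertex algebras $\mathbb{M}_2(\mathbb{C})$ and traces. Then $N=M_{\text{link}_v}=M_{u_1}*M_{u_2}$ is diffuse. For $0\neq a\in M_v$ with $\tau(a)=0$ one gets $e_NaJaJe_N=\|a\|_2^2\,e_N$, a nonzero multiple of $1_{L^2N}$. This is not in $\KKK^{\infty,1}(N)$: for compact $K$, $\langle(1-K)\hat u_n,\hat u_n\rangle\to 1$ along unitaries $u_n\in N$ tending weakly to $0$. Your fallback fails for the same reason, since $N\cap xNx^*=N$ for $x\in\mathcal{U}(M_v)$.

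The missing idea is to pass from links to stars:
\begin{itemize}
\item $M$ is biexact relative to $\{M_{\text{star}_v}\}_v$, because $\XX_{M_{\text{link}_v}}\subset\XX_{M_{\text{star}_v}}$.
\item $M_{\text{star}_v}=M_{\text{link}_v}\bar\otimes M_v$ is biexact once $M_{\text{link}_v}$ is, since $M_v$ is finite dimensional.
\item By \cite[Proposition 5.5]{charlesworth2025structure}, the $M_{\text{star}_v}$-bimodule $L^2M\ominus L^2M_{\text{star}_v}$ is a sum of bimodules $L^2M_{\text{star}_v}\otimes_D L^2M_{\text{star}_v}$ with $D$ finite dimensional. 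This is where your observation belongs: no vertex outside $\text{star}_v$ is adjacent to two non-adjacent vertices of $\text{link}_v$. So stars are coarse, hence mixing by Lemma \ref{lem: coarse implies mixing}.
\item The case $\text{star}_{v_0}=\GG$ must be split off, since then the family contains $M$ itself. There $M=M_{\text{link}_{v_0}}\bar\otimes M_{v_0}$ settles it directly.
\end{itemize}

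\textbf{Smaller points.} Your induction is not well founded as stated. Square-free graphs without infinite cliques can have unbounded clique number, for example a disjoint union of the $K_n$. Your description of links is also wrong: in the square-free graph $K_{1,n}$ the link of the centre consists of $n$ pairwise non-adjacent vertices. That error is harmless, since only square-freeness of $\text{link}_v$ is actually used.

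The paper avoids any clique bound. If $M$ were not biexact, the star argument gives $v_1$ with $M_{\text{link}_{v_1}}$ not biexact. Applying the same argument to the graph $\text{link}_{v_1}$, whose links are $\text{link}_{v_1}\cap\text{link}_u$, gives $v_2$, and so on. This produces an infinite clique, a contradiction.

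In (1)$\Rightarrow$(2), the condition $M_v\neq\mathbb{C}$ alone does not make $M_{v_1}*M_{v_2}$ nonamenable: $\mathbb{C}^2*\mathbb{C}^2$ with uniform traces is $L(\mathbb{Z}_2*\mathbb{Z}_2)$. Since (1) quantifies over all families, simply choose $M_{v_i}=M_{u_i}=\mathbb{M}_2(\mathbb{C})$ with traces, as the paper does.
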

\begin{proof}
    (2) $\implies$ (1): We first claim that $M$ is biexact if $ M_{\text{link}_v} $ is biexact for all $v$.
    
    Indeed, if there is a $v_0\in \VV(\GG)$ such that $ \text{star}(v_0)= \GG $, then $ M= M_{\text{link}_{v_0}}\otimes M_{v_0} $. Therefore $M$ is biexact iff $ M_{\text{link}_{v_0}} $ is biexact. Now assume no such $v_0$ exists. By \cite[Proposition 5.5]{charlesworth2025structure} (the same proof works for infinite graphs), for each $v\in \VV(\GG)$, our assumption implies that the $ M_{\text{star}_v} $-bimodule $ L^2M\ominus L^2M_{\text{star}_v} $ can be written as a direct sum of bimodules of the form $ L^2M_{\text{star}_v}\otimes_{D}L^2M_{\text{star}_v} $ with $D $ the graph product algebra of a subgraph of $\text{link}_v$ with at most two (adjacent) vertices. As $D$ is finite dimensional, $  L^2M_{\text{star}_v}\otimes_{D}L^2M_{\text{star}_v} $ is contained in a direct sum of coarse bimodules. In particular, $ M_{\text{star}_v} $ is coarse and therefore mixing by Lemma \ref{lem: coarse implies mixing}. Since $ M $ is biexact relative to $ \{ M_{\text{link}_v} \}_{v\in \VV} $ by Theorem \ref{thm: relative biexact graph product}, it is also biexact relative to $\{ M_{\text{star}_v} \}_{v\in \VV}$. It then follows from Theorem \ref{thm: main} that $M$ is biexact whenever $ M_{\text{link}_v} $ is biexact for each $v$ as $ M_{\text{link}_v}\otimes M_v = M_{\text{star}_v} $.

    Now, suppose for contradiction that $M$ is not biexact, then there exists a $v_1$ such that $ M_{\text{link}_{v_1}} $ is not biexact. But the link subgraph of $ \text{link}_{v_1} $ is of the form $ \text{link}_{v_1}\cap \text{link}_{u} $ with $u\in \text{link}_{v_1}$. Therefore, there exists $v_2\in \text{link}_{v_1}$ such that $ M_{\text{link}_{v_1}\cap M_{\text{link}_{v_2}}} $ is not biexact. Repeating this process, we obtain a sequence $ v_1,\cdots,v_n,\cdots $ such that $ M_{\bigcap_{j\leq n}\text{link}_{v_j}} $ is not biexact for each $n$. But now $\{v_j\}_{j=1}^\infty$ form an infinite clique, contradicting our assumption.

    (1) $\implies$ (2): Suppose (2) does not hold and there exist such vertices $v_1,v_2,u_1,u_2$. Choose $ M_{v_i}= M_{u_i}=\M_2(\C)$ with the canonical trace, then $ M $ contains the two commuting subalgebras $ M_{u_1}\ast M_{u_2} $ and $M_{v_1}\ast M_{v_2}$ which are nonamenable $\text{II}_1$ factors. This implies that $M$ is not solid and therefore not biexact.
\end{proof}

\section{Relatively biexact groups over almost malnormal subgroups}\label{relatively biexact section end}
In this section, we isolate two self-contained proofs for the specific case of our main Theorem \ref{main intro}, in the context of countable groups and almost malnormality. The first proof uses the characterization of the relative biexactness of a group $\Gamma$ via the action of $\Gamma \times \Gamma$. The second proof uses a similar strategy using the bidual, as in the proof of our main theorem.

Let $\Gamma$ be a discrete group, and $  \{ H_i\}_{i\in I}$ be a family of subgroups $ \Gamma $. For a function $f\in \ell^\infty(\Gamma)$ and $s\in \Gamma$, we always denote the left translation by $s\cdot f$, and the right translation by $\rho_s(f)$. For a probability measure $\mu \in \text{Prob}(\Gamma)$, we also denote by $s\cdot m$ the measure $ s\cdot m(\{g\}) = m(\{s^{-1}g\})$.

A $\Gamma$-boundary piece is a left and right $\Gamma$-invariant closed ideal $I\subset \ell^\infty(\Gamma)$. In particular, we are interested in the subgroups boundary piece $I=  c_0(\Gamma,\{H_i\}_{i\in I})$ consisting of functions $f\in \ell^\infty(\Gamma)$ vanishing at $\infty/\{H_i\}_{i\in I}$, i.e., for each $\varepsilon>0$, $ \{g\in \Gamma: |f(g)|>\varepsilon\} $ is contained in a finite union $ \bigcup gHg' $ with $g,g'\in \Gamma$.

The relative small-at-infinity boundary of $\Gamma$ is defined as the subalgebra
\[ \S_I(\Gamma):= \{f\in \ell^\infty(\Gamma): f-\rho_t(f)\in I,\forall t\in \Gamma\}.\]
And $\Gamma$ is said to be biexact relative to $I$ if the left action of $\Gamma$ on $\S_I(\Gamma)/I$ (or equivalently $ \S_I(\Gamma)$ \cite[Section 6]{ding2023biexact}) is amenable. For more details regarding the biexactness of groups, we refer the reader to the standard text \cite[Section 15.2]{brown2008textrm}

\subsection{A proof via the action of $\Gamma\times \Gamma$}
We recall that $\Gamma$ is biexact relative to $I$ iff the left and right action of $ \Gamma \times \Gamma $ on $ \ell^\infty(\Gamma)/I $ is amenable \cite[Prop. 15.2.3]{brown2008textrm}.

The following follows immediately from the definition of almost malnormality (see, for instance, Lemma 3.3 of \cite{DKE24properproximalgroups} Lemma 3.1 of \cite{kalantar2026conjugacycoamenability}).
\begin{lemma}\label{lem: sHt orthogonal}
    If $H\subset G$ is an almost malnormal subgroup, then the family of characteristic functions $ \{1_{sHt}\}_{[s],[t^{-1}]\in \Gamma/H} $ has mutually disjoint support in $ c_0(\Gamma,\{H\})/c_0(\Gamma) $.
    
    In particular, we have the decomposition of spectrum $$\text{Sp}(c_0(\Gamma,\{H\})/c_0(\Gamma)) = \bigsqcup_{[s],[t^{-1}]\in \Gamma/H} \text{Sp}(\ell^\infty(sHt )/c_0) = \bigsqcup_{[s],[t^{-1}]\in \Gamma/H} s(\beta H \backslash H)t, $$
    where $ \beta H := \text{Sp}(\ell^\infty(H)) $ is the Stone-\v{C}ech compactification of $H$.
\end{lemma}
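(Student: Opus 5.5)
We prove the two claims directly: first that the projections $1_{sHt}$ are mutually orthogonal modulo $c_0(\Gamma)$, and then the spectral decomposition, which follows by Gelfand duality.

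\emph{Well-definedness and the intersection estimate.} The set $sHt$ depends only on the cosets $sH$ and $Ht$, i.e.\ on $[s]$ and $[t^{-1}]$ in $\Gamma/H$. Each $1_{sHt}$ is a projection in $c_0(\Gamma,\{H\})$, so its image in the quotient is a projection. Disjointness of supports in $c_0(\Gamma,\{H\})/c_0(\Gamma)$ means exactly that $1_{sHt}1_{s'Ht'}=1_{sHt\cap s'Ht'}\in c_0(\Gamma)$, i.e.\ that $sHt\cap s'Ht'$ is finite whenever $(sH,Ht)\neq(s'H,Ht')$.

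Suppose $g=sht=s'h't'$ lies in the intersection. Write $a=s^{-1}s'$ and $b=t't^{-1}$. Then $hb^{-1}... $ more simply, $h=a h' b$. So the elements of the intersection correspond to solutions $(h,h')\in H\times H$ of $h=ah'b$, and the map $h\mapsto sht$ is injective. Hence $|sHt\cap s'Ht'|=|H\cap aHb|$.

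\emph{Case analysis.} If $a\in H$, then $aHb=Hb$. This meets $H$ only if $b\in H$, and in that case both coset pairs coincide, which is excluded. Symmetrically, if $b\in H$ we need $a\in H$. So we may assume $a,b\notin H$. Then $H\cap aHb$ is either empty or, after choosing an element $x_0=ah_0b$ in it, a translate of the stabilizer-type set. Concretely, $\{h':ah'b\in H\}=\{h': ah'b\,(ah_0b)^{-1}\in H\}=\{h': ah'h_0^{-1}a^{-1}\in H\}$, which is in bijection with $aHa^{-1}\cap H$. This set is finite by almost malnormality, since $a\notin H$; this is the remark in the Example that $g'Hg^{-1}\cap H$ is finite. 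This finiteness computation is the only real content of the argument, and it is where the hypothesis is used.

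\emph{Spectrum decomposition.} The quotient $A=c_0(\Gamma,\{H\})/c_0(\Gamma)$ is a commutative C$^*$-algebra. By definition it is generated, as a closed ideal, by the elements $f1_{F}$ with $F$ a finite union of double cosets $sHt$. Each such $f1_F$ is, modulo $c_0$, a finite orthogonal sum $\sum f1_{sHt}$, by the disjointness just shown (refine the union using the finiteness of pairwise intersections). Therefore
\[ A\cong c_0\text{-}\bigoplus_{[s],[t^{-1}]} \ell^\infty(sHt)/c_0(sHt). \]
The spectrum of a $c_0$-direct sum is the disjoint union of the spectra, each open and closed. Via the bijection $h\mapsto sht$ we have $\ell^\infty(sHt)\cong\ell^\infty(H)$, so $\mathrm{Sp}(\ell^\infty(sHt)/c_0)=\beta(sHt)\setminus sHt$, which is identified with $s(\beta H\setminus H)t$ through the left and right translation homeomorphisms of $\beta\Gamma$.

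The only care needed in this last step is checking that $A$ is exactly this $c_0$-sum: it is not larger, because of the definition of vanishing at $\infty/\{H\}$, and it is norm-closed.
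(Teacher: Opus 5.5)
Your argument is correct. It is exactly the direct verification the paper has in mind when it says the lemma ``follows immediately from the definition of almost malnormality'' (the paper itself only points to the literature). Specifically, $sHt\cap s'Ht'=s(H\cap aHb)t$ with $a=s^{-1}s'$, $b=t't^{-1}$. When $a\notin H$, a nonempty $H\cap aHb$ is the right coset $(H\cap aHa^{-1})x_0$ and hence finite, and the case $a\in H$, $b\notin H$ gives the empty set. Your passage from there to $c_0(\Gamma,\{H\})/c_0(\Gamma)$ being the $c_0$-direct sum of the $\ell^\infty(sHt)/c_0(sHt)$, and so to the clopen decomposition of the spectrum, is also sound; just delete the stray fragment ``$hb^{-1}\ldots$''.
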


\begin{lemma}
    If each $ H_i $ is almost malnormal and biexact, then the action $ \Gamma\times \Gamma $ on $ c_0(\Gamma,\{H_i\}_{i\in I})/c_0( \Gamma) $ is amenable.
\end{lemma}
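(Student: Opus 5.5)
The plan is to view the action of $\Gamma\times\Gamma$ on $c_0(\Gamma,\{H_i\}_{i\in I})/c_0(\Gamma)$ as an action on the locally compact spectrum
\[ X:=\text{Sp}\bigl(c_0(\Gamma,\{H_i\}_{i\in I})/c_0(\Gamma)\bigr). \]
I would then prove amenability in two stages: first for a single subgroup $H=H_i$, using Lemma \ref{lem: sHt orthogonal}, and then by gluing over $i\in I$.

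\textbf{Single subgroup.} Fix an almost malnormal, biexact $H$ and set $U_H:=\text{Sp}(c_0(\Gamma,\{H\})/c_0(\Gamma))$. By Lemma \ref{lem: sHt orthogonal}, the images of the projections $1_{sHt}$ are mutually orthogonal in $c_0(\Gamma,\{H\})/c_0(\Gamma)$. Hence each piece $s(\beta H\setminus H)t$ is clopen in $U_H$, and
\[ U_H=\bigsqcup_{([s],[t^{-1}])\in \Gamma/H\times\Gamma/H} s(\beta H\setminus H)t \]
is a topological disjoint union. The group $\Gamma\times\Gamma$, acting by $(g,h)\cdot x=gxh^{-1}$, permutes these pieces transitively. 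The stabilizer of the base piece $\beta H\setminus H$ is exactly $H\times H$, acting by left and right translation; this uses that $gH=H$ and $Hh^{-1}=H$ force $g,h\in H$. Therefore $U_H$ is $\Gamma\times\Gamma$-equivariantly homeomorphic to the induced space
\[ (\Gamma\times\Gamma)\times_{H\times H}(\beta H\setminus H). \]
By \cite[Prop. 15.2.3]{brown2008textrm} applied to $H$ with boundary piece $c_0(H)$, biexactness of $H$ means that the $H\times H$ action on $\ell^\infty(H)/c_0(H)$, that is on $\beta H\setminus H$, is amenable. Amenability passes to induced actions: one takes the approximately invariant maps $\mu_n:\beta H\setminus H\to\text{Prob}(H\times H)$ and transports them to each piece using coset representatives. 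The translation error this introduces is an element of $H\times H$, and it is absorbed by the approximate invariance of $\mu_n$; since the pieces are clopen, the resulting map is continuous. So the action on $U_H$ is amenable.

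\textbf{Gluing over $I$.} Write $J_i:=c_0(\Gamma,\{H_i\})/c_0(\Gamma)$. These are $\Gamma\times\Gamma$-invariant ideals, and their sum is dense in $c_0(\Gamma,\{H_i\}_{i\in I})/c_0(\Gamma)$. Equivalently, the sets $U_{H_i}$ form an open invariant cover of $X$. For finitely many indices I would proceed by induction using the short exact sequence
\[ 0\to J_1\to J_1+J_2\to J_2/(J_1\cap J_2)\to 0. \]
The action on the quotient is amenable because it is the restriction of the amenable action on $U_{H_2}$ to a closed invariant subset. Amenability is preserved under extensions of this kind (amenable on an open invariant subset and on its complement implies amenable). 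Finally, amenability on the increasing union of the open invariant sets $\bigcup_{i\in F}U_{H_i}$, over finite $F\subset I$, gives amenability on $X$, since amenable transformation groupoids are closed under such inductive limits of open invariant subspaces.

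\textbf{Main obstacle.} The step needing the most care is the single-subgroup identification. One must check that Lemma \ref{lem: sHt orthogonal} really gives clopen pieces in the non-unital spectrum. One must also check that the stabilizer computation needs only the coset structure, while almost malnormality enters only through disjointness. If the extension and union permanence arguments turn out to be delicate in the non-unital setting, I would instead pass to the unitization, or work directly with $\ell^\infty(\Gamma)/c_0(\Gamma)$ restricted to the closure of $X$.
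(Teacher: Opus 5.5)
Your proposal is correct and is essentially the paper's argument: you handle a single subgroup through the clopen decomposition from Lemma \ref{lem: sHt orthogonal}, transporting the $H\times H$-approximately invariant maps along coset representatives (your induced-space description), then use an extension argument for finitely many subgroups and an inductive limit over finite subfamilies. Your sequence $0\to J_1\to J_1+J_2\to J_2/(J_1\cap J_2)\to 0$ is in fact the correctly arranged version of the one the paper displays, which interchanges kernel and quotient and writes $c_0(\Gamma,\{H_1\cap H_2\})$ where the intersection of the two ideals is what is actually needed.
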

\begin{proof}
    We first assume that $\{H_i\}_{i\in I}$ consists of only one element $H$. It suffices to show that there exists a net \[m_i:  \text{Sp}(c_0(\Gamma,\{H\})/c_0(\Gamma)) = \bigsqcup_{[s],[t^{-1}]\in \Gamma/H} s(\beta H\backslash H)t \to \text{Prob}(\Gamma\times \Gamma)\]
    such that for any compact subset $ K\subset \text{Sp}(c_0(\Gamma,\{H\})/c_0(\Gamma)) $ and $ s,t\in \Gamma $,
    \[ \lim_{i}\sup_{x\in K}\|(s,t^{-1})\cdot m_i(x)-m_i( sxt ) \|_1 = 0. \]
    Since $\text{Sp}(c_0(\Gamma,\{H\})/c_0(\Gamma)) = \bigsqcup_{[s],[t^{-1}]\in \Gamma \backslash H} s(\beta H/H)t $ and each $ s(\beta H\backslash H)t $ is compact, fixing coset representatives $ \rho: \Gamma/H \to \Gamma $, we may assume $K$ is a finite union $  K=\bigsqcup_{k=1}^n s_k (\beta H \backslash H)t_k, $ with $s_k,t_k^{-1}\in \rho(\Gamma/H)$.
    As $H$ is biexact, the action $H\times H$ on $ \ell^\infty(H)/c_0(H) $ is amenable. Therefore, there exists a net $ \bar{m}_i: \text{Sp}(\ell^\infty(H)/c_0(H)) = \beta H \backslash H \to \text{Prob}(H\times H)
    $ such that for all $s,t\in H$,
    $$ \lim_i \sup_{x\in \beta H\backslash H} \|(s,t^{-1})\cdot \bar{m}_i(x)-\bar{m}_i(sxt)\|_1=0. $$
    For each $x\in \beta H\backslash H $ and $a,b\in \Gamma/H$, we define $m_i(\rho(a)x\rho(b)^{-1})\in \text{Prob}(\Gamma\times \Gamma)$ with support in $ \rho(a)H\times \rho(b)H $ as
    \[  m_i(\rho(a)x\rho(b)^{-1} ) :=(\rho(a),\rho(b))\cdot \bar{m}_i(x). \]
    Now, we have for $s,t\in \Gamma$,
    \begin{align*}
        &\sup_{y\in K}\|(s,t^{-1})\cdot m_i(y)-m_i(syt)\|_1\\ =& \sup_{x\in \beta H\backslash H}\sup_{ 1\leq k\leq n}\|(s,t^{-1})\cdot m_i( s_kx t_k )-m_i(ss_kxt_kt)  \|_1\\
        =& \sup_{x\in \beta H\backslash H} \sup_{1\leq k\leq n}\|(ss_k,(t_k t)^{-1})\cdot \bar{m}_i(x)\\ &- (\rho(ss_k),\rho((t_k t)^{-1})^{-1})\cdot \bar{m}_i \left( \rho(ss_k)^{-1}ss_kx tt_k \rho( (t_k t)^{-1} )\right)  \|_1\\
        =& \sup_{x\in \beta H\backslash H} \sup_{1\leq k\leq n}\|\left( \rho(ss_k)^{-1}ss_k,(t_k t)^{-1}\rho( (t_k t)^{-1} )\right)\cdot \bar{m}_i(x)\\ &- \bar{m}_i\left( \rho(ss_k)^{-1}ss_kx tt_k \rho( (t_k t)^{-1} )\right)  \|_1,
    \end{align*}
    which converges to $0$ by the definition of $ \bar{m}_i $.

    The case when the family consists of two elements $ \{H_1,H_2\} $ follows from the $ (\Gamma\times\Gamma)$-equivariant short exact sequence
    \[ 0\to c_0(\Gamma,\{H_2\})/c_0(\Gamma,\{H_1\cap H_2\}) \to c_0(\Gamma,\{H_1,H_2\})/c_0(\Gamma) \to c_0(\Gamma,\{H_1\})/c_0(\Gamma) \to 0. \]
    The same argument shows that the $(\Gamma\times \Gamma)$-action on $ c_0(\Gamma,\{H_i\}_{i\in F})/c_0(\Gamma) $ is amenable for any finite subsets $F\subset I$. Therefore, $ \left(c_0(\Gamma,\{H_i\}_{i\in I})/c_0(\Gamma)\right)\rtimes_r (\Gamma \times \Gamma) $ is nuclear since it is the inductive limit of $ \left(c_0(\Gamma,\{H_i\}_{i\in F})/c_0(\Gamma)\right)\rtimes_r (\Gamma \times \Gamma) $.
\end{proof}

\begin{corollary}\label{cor: group case main thm}
    If $\Gamma$ is biexact relative to $ \{H_i\}_{i\in I} $, and each $H_i$ is biexact and almost malnormal, then $\Gamma$ is biexact.
\end{corollary}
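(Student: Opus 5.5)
We want biexactness of $\Gamma$, i.e.\ amenability of the $\Gamma\times\Gamma$ action on $\ell^\infty(\Gamma)/c_0(\Gamma)$, by \cite[Prop. 15.2.3]{brown2008textrm} (with $I=c_0(\Gamma)$). The plan is to place this quotient in a $(\Gamma\times\Gamma)$-equivariant short exact sequence
\[ 0\to c_0(\Gamma,\{H_i\}_{i\in I})/c_0(\Gamma) \to \ell^\infty(\Gamma)/c_0(\Gamma) \to \ell^\infty(\Gamma)/c_0(\Gamma,\{H_i\}_{i\in I}) \to 0 \]
and show that the actions on both outer terms are amenable.

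For the quotient term, the hypothesis that $\Gamma$ is biexact relative to $\{H_i\}$ means, again by \cite[Prop. 15.2.3]{brown2008textrm} applied to $I=c_0(\Gamma,\{H_i\})$, that the $\Gamma\times\Gamma$ action on $\ell^\infty(\Gamma)/c_0(\Gamma,\{H_i\})$ is amenable. For the ideal term, the preceding lemma shows that the action on $c_0(\Gamma,\{H_i\}_{i\in I})/c_0(\Gamma)$ is amenable, using that each $H_i$ is almost malnormal and biexact.

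It then remains to conclude that amenability passes to the extension. For actions of discrete groups on unital commutative $C^*$-algebras, I would identify the action with amenability of the action on the spectrum $X=\mathrm{Sp}(\ell^\infty(\Gamma)/c_0(\Gamma))$. The spectrum splits as the open invariant set $U$, corresponding to the ideal, and its closed invariant complement $F$, corresponding to the quotient. The ideal is non-unital, so $U$ is only locally compact.

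The standard fact I would invoke is that, for $G$ discrete and $X$ compact, the action on $X$ is amenable if and only if the actions on $U$ and on $F$ are amenable. This holds because amenability of $G\curvearrowright X$ is equivalent to nuclearity of $C(X)\rtimes_r G$ (for exact $G$; note that $\Gamma\times\Gamma$ is exact here since $\Gamma$ embeds into a nuclear crossed product), and nuclearity is preserved under extensions. Concretely, one uses the exact sequence
\[ 0\to C_0(U)\rtimes_r G\to C(X)\rtimes_r G\to C(F)\rtimes_r G\to 0, \]
whose exactness uses amenability of the quotient action. The ideal crossed product is nuclear by the lemma's nuclearity conclusion, and the quotient crossed product is nuclear by the hypothesis. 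Hence the middle term is nuclear, and so the action is amenable. Alternatively, one can avoid exactness issues by citing the permanence result of Anantharaman-Delaroche that amenability of a $G$-space is stable under invariant open/closed decompositions.

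I expect this extension step to be the main obstacle. The delicate points are justifying that the short exact sequence of reduced crossed products is exact, and passing from nuclearity back to amenability of the action. The first is ensured because the quotient action is amenable. For the second, I would use \cite[Theorem 4.3.4]{brown2008textrm}, after noting that $\Gamma$ is exact since it is relatively biexact and the $H_i$ are exact.
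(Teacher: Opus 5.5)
Your proposal is correct and follows the paper's proof: the paper also deduces the corollary from the preceding lemma (amenability of the $\Gamma\times\Gamma$ action on $c_0(\Gamma,\{H_i\}_{i\in I})/c_0(\Gamma)$), the relative biexactness hypothesis read through \cite[Prop. 15.2.3]{brown2008textrm}, and the same $(\Gamma\times\Gamma)$-equivariant short exact sequence. Your argument that amenability passes to the extension fills in what the paper leaves implicit: the reduced crossed-product sequence is exact because the quotient action is amenable, and nuclearity is stable under extensions (the exactness caveat you add is harmless but not needed for the nuclearity--amenability equivalence for discrete groups).
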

\begin{proof}
    This follows from the previous Lemma and the $ (\Gamma \times \Gamma) $-equivariant short exact sequence
    \[ 0\to c_0(\Gamma,\{H_i\}_{i\in I})/c_0(\Gamma)\to\ell^\infty(\Gamma)/c_0(\Gamma) \to \ell^\infty(\Gamma)/c_0(\Gamma,\{H_i\}_{i\in I})\to 0. \]
\end{proof}

\begin{remark}
    The main difficulty for generalizing this proof for von Neumann algebras is that we do not have a von Neumann algebra analogue of the biexactness characterization using the $\Gamma \times \Gamma$ action.
\end{remark}

\subsection{A proof via the bidual}

% The following characterization of relative biexactness was suggested to us by J. Peterson. In particular, using this, we could have another proof of Corollary \ref{cor: group case main thm} for countable $\Gamma$ in the same way as in the proof of our main theorem.

The following characterization of relative biexactness, suggested to us by J. Peterson, serves as a natural analog to Theorem \ref{thm: bidual characterization of biexact}. We include it here because it is of independent interest and conceptually bridges our general von Neumann algebra approach with the group setting. Specifically, while we have already provided a short proof of Corollary \ref{cor: group case main thm}, this characterization provides the necessary tool to adapt the techniques from our main theorem into an alternative, purely group-theoretic proof for countable $\Gamma$. 

\begin{thm}\label{thm: Jesse}
    Let $\Gamma$ be a countable discrete exact group, and $I\subset \ell^\infty(\Gamma)$ be a $\Gamma$-boundary piece. The following are equivalent
    \begin{enumerate}
        \item $\Gamma$ is biexact relative to $I$.
        \item For every finite subset $ E\subset \Gamma $ and $\varepsilon>0$, there exists $\mu: \Gamma\to \text{Prob}(\Gamma)$ such that for all $s,t\in \Gamma$, the subset
        \[ \{x\in \Gamma: \|\mu(sxt)-s\cdot\mu(x)\|_1\geq \varepsilon\} \] is small relative to $I$. 
        \item The left action of $\Gamma$ on the abelian von Neumann algebra
    \[ \left [\left(\ell^\infty( \Gamma )/I\right)^{**}\right]^{\Gamma_r} \] is amenable in the sense of Zimmer, where $ \left [\left(\ell^\infty( \Gamma )/I\right)^{**}\right]^{\Gamma_r}$ means the right $\Gamma$-invariant part of $ \left(\ell^\infty( \Gamma )/I\right)^{**}  $.
    \item For every finite subset $E\subset \Gamma$ and every $\delta>0$, there exist a finite subset $K\subset \Gamma$ and elements $a_g\in (\ell^\infty(\Gamma)/I)_+$ with $g\in K$, such that $ \sum_{g\in K}a_g=1 $ and, putting $a_g=0$ outside $K$, for all $s\in E$,
    \[ \|\sum_{g\in \Gamma}|s\cdot a_g-a_{sg}|\|<\delta,\quad \|   \sum_{g\in \Gamma}| \rho_s(a_g)-a_{g}|\|<\delta\]
    \end{enumerate}
\end{thm}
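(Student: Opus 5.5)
I will prove the cycle of implications using the definition that $\Gamma$ is biexact relative to $I$ precisely when the left action on $\S_I(\Gamma)/I$ is amenable; equivalently, by \cite[Prop. 15.2.3]{brown2008textrm}, when the $\Gamma\times\Gamma$ action on $\ell^\infty(\Gamma)/I$ is amenable. Throughout, write $A=\ell^\infty(\Gamma)/I$, and recall that a subset $E\subset\Gamma$ is small relative to $I$ when $1_E\in I$.

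The plan is to run the chain $(1)\Rightarrow(2)\Rightarrow(4)\Rightarrow(3)\Rightarrow(1)$.

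\textbf{Step $(1)\Rightarrow(2)$.}
1. Amenability of the left action on the unital commutative $C^*$-algebra $\S_I(\Gamma)$ gives finitely supported positive $\xi_g\in\S_I(\Gamma)$ with $\sum_g\xi_g=1$ and $\|\sum_g|s\cdot\xi_g-\xi_{sg}|\|<\varepsilon$ for $s\in E$.
2. Evaluating at $x$ defines $\mu(x)=\sum_g\xi_g(x)\delta_g$.
3. The left almost-invariance yields $\|\mu(sx)-s\cdot\mu(x)\|_1<\varepsilon$ for every $x$.
4. Since each $\xi_g-\rho_t(\xi_g)\in I$ and only finitely many $g$ occur, the set $\{x:\|\mu(xt)-\mu(x)\|_1\ge\varepsilon\}$ lies in $I$.
5. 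Combining the two estimates handles $\mu(sxt)$ against $s\cdot\mu(x)$ for $s\in E$ and all $t$. Finite support is what makes the right-invariance error land in $I$ uniformly.

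\textbf{Step $(2)\Rightarrow(4)$.}
1. Given $\mu$, set $a_g$ to be the image in $A$ of $x\mapsto\mu(x)(g)$.
2. Almost-invariance is controlled in $A$, because the norm in $A$ ignores sets small relative to $I$.
3. The main technical point is obtaining finite support $K$. I would first approximate $\mu$ by finitely supported measures. Exactness of $\Gamma$ is needed here: it lets us assume $\mu$ takes values in measures of uniformly bounded finite support, or replace $\mu$ by a convolution with a finitely supported Reiter function for the amenable action on $\beta\Gamma$. Without this, $\sum_g a_g$ is an infinite sum in $A$.

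\textbf{Step $(4)\Rightarrow(3)$.}
1. The right-invariance estimate lets one average $a_g$ over the right action. Since $\Gamma$ is countable, take weak$^*$ cluster points in $A^{**}$ of Ces\`aro/F{\o}lner-type averages along the right action, or apply an invariant mean in the bidual.
2. This produces positive $b_g$ in the right-invariant part $[A^{**}]^{\Gamma_r}$ with $\sum b_g=1$ and $\|\sum|s\cdot b_g-b_{sg}|\|\le\delta$.
3. This is the Reiter-type criterion for Zimmer amenability of a $\Gamma$-action on an abelian von Neumann algebra, as in Anantharaman-Delaroche.

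\textbf{Step $(3)\Rightarrow(1)$.}
1. The right-invariant bidual $[A^{**}]^{\Gamma_r}$ contains $\S_I(\Gamma)/I$ weak$^*$-densely in an appropriate sense, paralleling Theorem~\ref{thm: bidual characterization of biexact}.
2. Zimmer amenability of the von Neumann action gives weak$^*$ approximate invariance, which must be pushed down to norm approximate invariance in $\S_I(\Gamma)/I$.
3. I would use a Hahn--Banach/convexity argument: the functions $g\mapsto\langle b_g,\omega\rangle$ for normal states $\omega$ show that norm-closed convex hulls of almost-invariant systems in $\S_I(\Gamma)/I$ contain weakly almost-invariant ones.
4. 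Exactness of $\Gamma$, which ensures amenability at the $\beta\Gamma$ level, supplies the local reflexivity needed to pass from bidual to $C^*$ level, as in Ozawa's proof that weak$^*$ amenability of $\ell^\infty(\Gamma)^{**}$ suffices.

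I expect this last step, $(3)\Rightarrow(1)$, to be the main obstacle. The bidual-to-$C^*$ descent while retaining right-invariance modulo $I$ is delicate. I would first try to prove $(3)\Rightarrow(4)$ by a separation argument in $\ell^1(\Gamma,A)$, and then recover $(4)\Rightarrow(1)$ by lifting the $a_g$ to $\ell^\infty(\Gamma)$ and averaging over the right action to land in $\S_I(\Gamma)$ modulo $I$.
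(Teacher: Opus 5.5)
There is a genuine gap: your cycle does not close, because $(4)\Rightarrow(3)$ relies on an averaging that is not available and $(3)\Rightarrow(1)$ is never actually proved.

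\textbf{What works.} Your steps $(1)\Rightarrow(2)$ and $(2)\Rightarrow(4)$ are essentially fine. For the uniform finite support in the latter, compose $\mu$ with a kernel $m:\Gamma\to\text{Prob}(K)$ witnessing amenability of $\Gamma\curvearrowright\beta\Gamma$, i.e.\ set $\nu(x)=\sum_h\mu(x)(h)\,m(h)$; this gives measures supported in the fixed finite set $K$ with the same almost-invariance.

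\textbf{Where it fails.} In $(4)\Rightarrow(3)$ you propose to make the $a_g$ exactly right-invariant by Ces\`aro/F{\o}lner averages or an invariant mean along the right action. But $\Gamma$ is not assumed amenable, and the cases of interest are not, so no such averaging exists. A partition of unity that is only $\delta$-invariant under a finite set $E$ of right translations cannot be pushed into $[(\ell^\infty(\Gamma)/I)^{**}]^{\Gamma_r}$ this way. Taking weak$^*$ limits as $(E,\delta)$ grows does give exact right-invariance, but it can lose $\sum_g b_g=1$, because the supports $K$ escape to infinity. Your $(3)\Rightarrow(1)$ is only a list of ingredients (density, Hahn--Banach, local reflexivity); it never descends from the bidual to Reiter functions in $\S_I(\Gamma)$. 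Your fallback $(4)\Rightarrow(1)$ reuses the same invalid right-averaging.

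\textbf{How the paper avoids this.} It runs the implications the other way.
\begin{itemize}
\item $(1)\Rightarrow(3)$ is immediate. The map $\S_I(\Gamma)\to\ell^\infty(\Gamma)/I\subset(\ell^\infty(\Gamma)/I)^{**}$ is a left-equivariant unital $*$-homomorphism. It lands in the right-invariant part because $f-\rho_t(f)\in I$, so Reiter functions for $\S_I(\Gamma)$ push forward.
\item $(3)\Rightarrow(4)$ is a Hahn--Banach argument. If (4) fails, separation yields positive functionals $\varphi_s,\psi_t$ with $\sum_s\varphi_s(\sum_g|s\cdot a_g-a_{sg}|)+\sum_t\psi_t(\sum_g|\rho_t(a_g)-a_g|)\ge\delta$ for every finite partition of unity in $\ell^\infty(\Gamma)/I$. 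Exactness turns Zimmer amenability of $N=[(\ell^\infty(\Gamma)/I)^{**}]^{\Gamma_r}$ into a finitely supported Reiter partition $(\alpha_g)$ in $N$. This partition is \emph{exactly} right-invariant, so the $\psi_t$-terms vanish. Approximating the $\alpha_g$ by elements of $\ell^\infty(\Gamma)/I$ against the finitely many functionals involved gives the contradiction.
\item $(4)\Rightarrow(2)$ just lifts the $a_g$ to a positive partition of unity $f_g$ and sets $\mu(x)(g)=f_g(x)$, with no averaging.
\item $(2)\Rightarrow(1)$ is quoted from \cite[Remark 15.1.3]{brown2008textrm}, the standard passage from finite-$(E,\varepsilon)$ witnesses to relative biexactness.
\end{itemize}
So in the paper, exact right-invariance is either inherited from (1) or supplied by that standard equivalence. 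It is never produced by averaging over $\Gamma$.
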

\begin{proof}
    The equivalence between (1) and (2) follows from \cite[Remark 15.1.3]{brown2008textrm}.
    
    (1) $\implies$ (3): Suppose that $\Gamma$ is biexct relative to $I$, we have that $ \S_I(\Gamma)$ is $\Gamma$-amenable. Since $ \iota:\S_I(\Gamma)\to \left [\left(\ell^\infty( \Gamma )/I\right)^{**}\right]^{\Gamma_r} $ is $\Gamma$-equivariant u.c.p, this implies that $ \left [\left(\ell^\infty( \Gamma )/I\right)^{**}\right]^{\Gamma_r}  $ is $\Gamma$-amenable.

    (3) $\implies $ (4): Fix a finite subset $ E\subset \Gamma $ and $\delta>0$. Suppose that no such $K\subset \Gamma$ and tuple $(a_g)_{g\in K}$ satisfy (4). Consider the convex set $\mathcal{C}$ of tuples $ (x_s,y_t)_{s,t\in E}\in \bigoplus_{s\in E}(\ell^\infty(\Gamma)/I)_+ \oplus \bigoplus_{s\in E}(\ell^\infty(\Gamma)/I)_+ $ such that there exists a finite partition of unity $(a_g)_{g\in \Gamma}\subset (\ell^\infty(\Gamma)/I)_+$ with $ \sum a_g=1 $, and
    \[ x_s\geq \sum_g |s\cdot a_g-a_{sg}|,\quad y_t\geq \sum_{g} |\rho_t(a_g)-a_g|. \]

    Since we are assuming that (4) does not hold, $\mathcal{C}\cap \{(x_s,y_t)_{s,t\in E}: \|x_s\|,\|y_t\|<\delta, \forall s,t\in E\} = \varnothing$. By Hahn-Banach separation theorem, there exist nonzero positive linear functionals, $ \varphi_s,\psi_t\in (\ell^\infty(\Gamma)/I)_+^* $ such that $ \sum_{s\in E}\|\varphi_s\|+\sum_{t\in E}\|\psi_t\|=1 $ and for any finite partition of unity $ (a_g)_{g\in \Gamma}\subset (\ell^\infty(\Gamma)/I)_+ $,
    \begin{equation}\label{eq: section 5}
        \sum_{s\in E} \varphi_s \left(\sum_g |s\cdot a_g-a_{sg}|\right)+ \sum_{t\in E}\psi_t\left( \sum_{g} |\rho_t(a_g)-a_g|\right)\geq \delta.
    \end{equation} 
    We now contradict this with the Zimmer amenability.
    
    Since $ N:=\left [\left(\ell^\infty( \Gamma )/I\right)^{**}\right]^{\Gamma_r}  $ is $\Gamma$-amenable in the sense of Zimmer, since $\Gamma$ is exact, the action of $\Gamma$ on the spectrum $ \text{Sp}(N)$ is amenable. We can choose a map $m:\text{Sp}(N)\to \text{Prob}(\Gamma) $ such that $ \sup_x\| s\cdot m(x)-m(sx) \|<\delta/4 $ for all $E\subset \Gamma$, we may further assume that $\text{supp}(m(x))$ is contained in a finite subset $K\subset \Gamma$ for all $x\in \text{Sp}(N)$ (see e.g. \cite[Lemma 4.3.8]{brown2008textrm}). Define for each $g\in K$, define $\alpha_g\in N_+$ as $\alpha_g(x):=m(x)(g) $ for $x\in \text{Sp}(N)$, and set $\alpha_g=0$ for $g\in \Gamma/K$, then we can check from the definition that for all $s\in E$, $\sum_{g\in \Gamma} \alpha_g=1$ and \[ \|\sum_{g\in \Gamma}|s\cdot\alpha_g -\alpha_{sg}|\|<\delta/4. \]
    We now contradict \eqref{eq: section 5} using approximation of $(\alpha_g)_g$. Let $S$ be the finite subset of linear functionals on $ \ell^\infty(\Gamma)/I $,
    \[ S:= \{ \varphi_s,\psi_t,\varphi_{s}\circ \lambda_g,\varphi_s\circ \rho_g,\psi_{t}\circ \lambda_g,\psi_t\circ \rho_g: s,t,g\in E \}. \]
    Choose a finite partition of unity $(a_g)_{g\in K}$ close to $ (\alpha_g)_{g\in K} $ in the sense that
    \[ \sum_{\varphi\in S}\varphi( \sum_{g\in K}|a_g-\alpha_g| )<\delta/4. \]
    Therefore, we obtain for all $s\in E$
    \begin{align*}
        \varphi_s\left(\sum_{g\in \Gamma}|s\cdot a_g -a_{sg}| \right) < \varphi_s\left(\sum_{g\in \Gamma}|s\cdot\alpha_g -\alpha_{sg}|\right)+\delta/4<\delta/2.
    \end{align*}
    Similarly, for all $t\in E$
    \begin{align*}
        \psi_t\left(\sum_{g\in \Gamma}|\rho_t( a_g) -a_{g}| \right) < \psi_t\left(\sum_{g\in \Gamma}|\rho_t(\alpha_g) -\alpha_{g}|\right)+\delta/4=\delta/4.
    \end{align*}
    Altogether, this contradicts \eqref{eq: section 5}.
    
    (4) $\implies$ (2): Let $(a_g)_{g\in K}\subset (\ell^\infty(\Gamma)/I)_+$ be the elements given by (4), choose their positive lifts $f_g\in \ell^\infty(\Gamma)$ such that $\sum_{g\in K} f_g=1 $. Define $\mu: \Gamma\to \text{Prob}(\Gamma)$, \[ \mu(x)(g):=f_g(x). \] 
    For $s,t\in E$, we set $ H_{s,t}(x) := \|\mu(sxt)-s\cdot \mu(x)\|_1  $ for $x\in \Gamma$, then
    \[ H_{s,t} = \sum_{g\in K} |s^{-1}\cdot \rho_t(f_g)- f_{s^{-1}g}| \leq \sum_{g\in \Gamma}|s^{-1}\cdot \rho_t(f_g)-s^{-1}\cdot f_g|+ \sum_{g\in \Gamma}|s^{-1}\cdot f_g-f_{s^{-1}g}|.\]
    Let $Q:\ell^\infty(\Gamma)\to \ell^\infty(\Gamma)/I$ be the quotient map, then the above equation implies
    \[ Q(H_{s,t})\leq \sum_{g\in \Gamma}|\rho_t(a_g)-a_g|+ \sum_{g\in \Gamma}|s^{-1}\cdot a_g-a_{s^{-1}g}| \leq 2\delta.
\]
Therefore, $ \{x\in \Gamma: H_{s,t}\geq 2\delta\} $ is small relative to $I$.
\end{proof}

We now provide a sketch of the second proof of Corollary \ref{cor: group case main thm} using (3) of this characterization. For simplicity, we will only deal with the case with $ I=c_0(\Gamma,\{H\})$ for a single almost malnormal subgroup $H$. 

Suppose $ \Gamma$ is biexact relative to $H $ and $H$ is almost malnormal and biexact. Since \[ \left [\left(\ell^\infty( \Gamma )/c_0(\Gamma) \right)^{**}\right]^{\Gamma_r} =  \left [\left(\ell^\infty( \Gamma )/c_0(\Gamma,\{H\}) \right)^{**}\right]^{\Gamma_r}\oplus \left[ (c_0(\Gamma,\{H\})/c_0(\Gamma))^{**}\right]^{\Gamma_r},\]
by (3) of Theorem \ref{thm: Jesse}, it suffices to show that the left action of $\Gamma$ on $\left[ (c_0(\Gamma,\{H\})/c_0(\Gamma))^{**}\right]^{\Gamma_r}$ is amenable.

The following is an analogue of our key Lemma \ref{key lemma}.
\begin{lemma}
    Let $ C\subset \ell^\infty(\Gamma) $ be the subalgebra of functions $f$ such that $\forall\varepsilon>0$, $\{g: |f(g)|\geq \varepsilon \}$ is contained in a finite union of left cosets $ \bigcup_{i} g_iH $. Then there exists a left $\Gamma$-invariant c.c.p. map
    \[ \phi_0:C\to \left[ (c_0(\Gamma,\{H\})/c_0(\Gamma))^{**}\right]^{\Gamma_r}, \]
    which extends to a u.c.p. map
    \[  \phi:C^{**}\to \left[ (c_0(\Gamma,\{H\})/c_0(\Gamma))^{**}\right]^{\Gamma_r}\]
\end{lemma}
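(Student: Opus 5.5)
The construction mirrors Lemma \ref{key lemma}, with the group-theoretic objects playing the following roles. The algebra $C$ plays the role of $\overline{Me_N\B(L^2M)e_NM}$, namely functions supported near finitely many left cosets $gH$. The Jones projection $e_N$ corresponds to $1_H$, and the biexactness of $H$ supplies the replacement for $\psi$.

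\textbf{Step 1: the input from $H$.} Since $H$ is biexact, its left action on $\S(H)$ is amenable, and hence so is its action on $\ell^\infty(H)/c_0(H)$ restricted to the right-invariant part. By (1)$\implies$(3) of Theorem \ref{thm: Jesse}, applied to $H$ with $I=c_0(H)$, the left action of $H$ on $A_H:=[(\ell^\infty(H)/c_0(H))^{**}]^{H_r}$ is amenable. I expect to use instead the more elementary fact, parallel to Lemma \ref{expectation from BH to S}: there is a left $H$-equivariant u.c.p. map
\[ \psi:\ell^\infty(H)\to A_H .\]
To build it, use amenability of $H\curvearrowright \S(H)$ to get approximately equivariant maps $\ell^\infty(H)\to \S(H)$ that factor through $\ell^\infty(H)$ via the measures $\mu_i$. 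Then pass to a weak$^*$ limit in the bidual of $\S(H)/c_0(H)$, which is right-$H$-invariant. The identity $\psi(1)=1$ makes $\psi$ unital.

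\textbf{Step 2: defining $\phi_0$.} By Lemma \ref{lem: sHt orthogonal}, the spectrum of $c_0(\Gamma,\{H\})/c_0(\Gamma)$ decomposes as a disjoint union of the pieces $s(\beta H\setminus H)t$. Consequently its bidual is a product over double-coset indices $([s],[t^{-1}])$. The right-$\Gamma$-invariant part is then identified with a product over left cosets $[s]\in\Gamma/H$ of copies of $A_H$, translated by $s$: right invariance glues all the $t$-pieces together, and the $H$-action on the right is absorbed into $A_H$. Fix coset representatives $\rho$. For $f\in C$, write $f=\sum_{a\in\Gamma/H} f|_{\rho(a)H}$, a sum that is norm-convergent modulo the small tails. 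Then define
\[ \phi_0(f):=\Big(\rho(a)\cdot\psi\big(\rho(a)^{-1}\cdot f|_{\rho(a)H}\big)\Big)_{a\in\Gamma/H}. \]
The definition does not depend on the choice of representatives, because $\psi$ is left $H$-equivariant; this is the analogue of $N$-bimodularity of $\psi$. Left $\Gamma$-equivariance follows from the same computation as in the proof of the $\Gamma\times\Gamma$ lemma above: $s\rho(a)=\rho(sa)h$ with $h\in H$, and $h$ is absorbed by the equivariance of $\psi$. Complete positivity and contractivity are immediate, since $\phi_0$ acts coordinatewise over the disjoint cosets and each coordinate is a u.c.p. map.

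\textbf{Step 3: extension to $C^{**}$.} As in Lemma \ref{key lemma}, take $\phi:=(\phi_0^*|_{\text{predual}})^*$, the normal extension into the von Neumann algebra $[(c_0(\Gamma,\{H\})/c_0(\Gamma))^{**}]^{\Gamma_r}$. Its unit is the supremum of the translates $\rho(a)\psi(1)$, which is $\bigvee_a \rho(a)1_{\beta H\setminus H}$, i.e.\ the unit of the target. Hence $\phi$ is unital, mirroring the computation $\phi(\bar q')=q_{\XX_N}q_{\KKK}^\perp$.

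\textbf{Main obstacle.} The hard part is the identification in Step 2. One has to show that the right-invariant part really splits over left cosets. Concretely, elements of $C$ must act coherently across the different double cosets $sHt$: each left coset $sH$ meets infinitely many of the $sHt$ pieces, but only in compact pieces coming from finite sets, by almost malnormality. I would handle this exactly as the mixing Lemma \ref{lem: basic construction in bidual} does, showing that modulo $c_0(\Gamma)$ the function $f|_{sH}$ only sees $s(\beta H\setminus H)$, while the pieces $s(\beta H\setminus H)t$ with $t\notin H$ are reached only through right translation.
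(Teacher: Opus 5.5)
Your proposal is correct and is essentially the paper's construction. There too, a left $H$-equivariant u.c.p.\ map supplied by biexactness of $H$ (the paper's $E_H:\ell^\infty(H)\to\S(H)^{**}$, your $\psi$) is copied onto each left coset $yH$ and right-periodized over the right cosets. The result is cut down by $p=\bigvee_{g}\iota(1_{Hg})-1_{c_0(\Gamma)^{**}}$, Lemma \ref{lem: sHt orthogonal} gives orthogonality of the pieces for distinct $y$, and unitality is checked on $\bigvee_g\iota(1_{gH})$.

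The only difference is bookkeeping. You identify the target up front as $\prod_{a\in\Gamma/H}A_H$ with $A_H=[(\ell^\infty(H)/c_0(H))^{**}]^{H_r}$. Given Lemma \ref{lem: sHt orthogonal}, this identification is just an orbit--stabilizer computation for the right action on the pieces indexed by $(sH,Ht)$, so your ``main obstacle'' needs no analogue of the mixing lemma. It also makes positivity, equivariance and unitality immediate, and it shows your coordinatewise formula is already a unital equivariant map on all of $\ell^\infty(\Gamma)$, so the detour through $C^{**}$ is not needed.
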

\begin{proof}
    Since $H$ is biexact, there is a left $H$-invariant net of u.c.p. maps $ E_H^{(n)}:\ell^\infty(H)\to \S(H)^{**} $.

    Fix again left cosets representatives $ \rho:  \Gamma/H \to \Gamma $, we define the u.c.p. maps
    $ \tilde{E}_H: \ell^\infty(H)\to \ell^\infty( \Gamma )^{**} $,
    \[ \tilde{E}_H(f)=\sum_{ y\in \rho(\Gamma/H)}\rho_{y^{-1}}(E_H(f)),  \]
    One can check that the image of $\tilde{E}_H$ is contained in
    \[ \{f\in \ell^\infty(\Gamma)^{**}: f-\rho_t(f)\in B^{**},\forall t\in \Gamma \} \]
    where $B \subset \ell^\infty(\Gamma)$ is the subalgebra of functions $f$ such that for $\varepsilon >0$, $ \{ g:|f(g)|\geq \varepsilon \} $ is contained in a finite union of right cosets of $H$.

    We can now define the desired c.c.p. map $\phi_0: C\to \left[ (c_0(\Gamma,\{H\})/c_0(\Gamma))^{**}\right]^{\Gamma_r}$ as
    \[ \phi_0(y\cdot f):= y\cdot\tilde{E}_H(f)p,\forall y\in \rho(\Gamma/H),f\in \ell^\infty(H), \]
    where $p = \left(\vee_{g\in \Gamma}\iota(1_{Hg})\right)-\vee_{g\in \Gamma}\iota(1_{\{g\}}) = \left(\vee_{g\in \Gamma}\iota(1_{Hg})\right)-1_{c_0(\Gamma)^{**}}$. Indeed, to see that $\phi_0$ is well-defined, we note that $\phi_0$ is well-defined for functions supported on a finite union of left cosets. Note also that the support of $y\cdot \tilde{E}_H(f)p $ is majorized by $ \vee_{g\in \Gamma} \iota(1_{yHg})-1_{c_0(\Gamma)^{**}} $, which are mutually orthogonal for distinct $y\in \rho(\Gamma/H)$ by Lemma \ref{lem: sHt orthogonal}. Therefore, $\phi_0$ is contactive and thus extends to the whole $C$.

    Finally, to see that $\phi$ is unital, as $ q:=\vee_{g\in \Gamma} \iota(1_{gH}) \in (\ell^\infty(\Gamma))^{**}$ is the identity of $C^{**}$, we simply compute $  \phi(\vee_{g\in \Gamma}\iota(1_{gH}))= \vee_{ g,h\in \Gamma} \iota(1_{gHh})-1_{c_0(\Gamma)^{**}}. $
\end{proof}

To finish the second proof, let again $ q:=\vee_{g\in \Gamma} \iota(1_{gH}) \in (\ell^\infty(\Gamma))^{**}$ be the identity of $ C^{**}$ inside $(\ell^\infty(\Gamma) )^{**}$, then we have a $\Gamma$-invariant u.c.p. map \[ \phi\circ\text{Ad}(q)\circ\iota: \ell^\infty(\Gamma) \to C^{**} \to \left[ (c_0(\Gamma,\{H\})/c_0(\Gamma))^{**}\right]^{\Gamma_r}, \]
which implies the amenability of $ \left[ (c_0(\Gamma,\{H\})/c_0(\Gamma))^{**}\right]^{\Gamma_r} $ by the exactness of $\Gamma$.

\bibliographystyle{alpha}
\bibliography{main}
\end{document}